\documentclass[12pt,reqno]{amsart}

\usepackage{amssymb}
\usepackage{hyperref}
\usepackage{bm}
\usepackage{caption,subcaption}
\usepackage{color}
\usepackage{pgf,tikz}
\usepackage{wrap fig}
\usepackage{xypic}
\usepackage{epigraph}

\usepackage{marginnote}
\setlength\marginparwidth{0.8in}

\addtolength{\oddsidemargin}{-.8in}
\addtolength{\evensidemargin}{-.8in}
\addtolength{\textwidth}{1.6in}
\addtolength{\topmargin}{-.5in}
\addtolength{\textheight}{.5in}
\addtolength{\marginparwidth}{-10pt}

\newcommand{\Gal}{\text{\rm Gal}}
\newcommand{\im}{\text{\rm im}}

\newcommand{\comment}[1]{}

\makeindex

\begin{document}

\keywords{Biquadratic extension, Galois module, Hilbert 90, pro-$p$ groups, absolute Galois groups, Klein $4$-group}
\subjclass[2010]{Primary 12F10; Secondary 16D70}

\thanks{The second author is partially supported by the Natural Sciences and Engineering Research Council of Canada grant R0370A01.  He also gratefully acknowledges the Faculty of Science Distinguished Research Professorship, Western Science, in years 2004/2005 and 2020/2021. The third author is partially supported by 2017--2019 Wellesley College Faculty Awards. The fourth author was supported  in part by National Security Agency grant MDA904-02-1-0061.}

\title{Galois module structure of square power classes for biquadratic extensions}

\author[Frank Chemotti]{Frank Chemotti}
\email{fchemotti@gmail.com}

\author[J\'{a}n Min\'{a}\v{c}]{J\'{a}n Min\'{a}\v{c}}
\address{Department of Mathematics, Western University, London, Ontario, Canada N6A 5B7}
\email{minac@uwo.ca}

\author[Andrew Schultz]{Andrew Schultz}
\address{Department of Mathematics, Wellesley College, 106 Central Street, Wellesley, MA \ 02481 \ USA}
\email{andrew.c.schultz@gmail.com}

\author[John Swallow]{John Swallow}
\address{Office of the President, 2001 Alford Park Drive, Kenosha, WI \ 53140 \ USA}
\email{jswallow@carthage.edu}

\date{\today}

\begin{abstract}
For a Galois extension $K/F$ with $\text{char}(K)\neq 2$ and $\Gal(K/F) \simeq \mathbb{Z}/2\mathbb{Z}\oplus\mathbb{Z}/2\mathbb{Z}$, we determine the $\mathbb{F}_2[\Gal(K/F)]$-module structure of $K^\times/K^{\times 2}$. Although there are an infinite number of (pairwise non-isomorphic) indecomposable $\mathbb{F}_2[\mathbb{Z}/2\mathbb{Z}\oplus\mathbb{Z}/2\mathbb{Z}]$-modules, our decomposition includes at most $9$ indecomposable types.   This paper marks the first time that the Galois module structure of power classes of a field has been fully determined when the modular representation theory allows for an infinite number of indecomposable types.  
\end{abstract}

\maketitle


\newtheorem*{theorem*}{Theorem}
\newtheorem*{lemma*}{Lemma}
\newtheorem{theorem}{Theorem}
\newtheorem{proposition}{Proposition}[section]
\newtheorem{corollary}[proposition]{Corollary}
\newtheorem{lemma}[proposition]{Lemma}

\theoremstyle{definition}
\newtheorem*{definition*}{Definition}
\newtheorem*{remark*}{Remark}
\newtheorem{example}[proposition]{Example}

\parskip=6pt plus 2pt minus 2pt


\section{Introduction}

\subsection{Background and Motivation}  

Let $K$ be a field, and write $\xi_p$ for a primitive $p$th root of unity.  We write $K_{\text{\tiny{sep}}}$ for a separable closure of $K$, and $K(p)$ for the maximal $p$-extension within $K_{\text{\tiny{sep}}}$.  Each of these extensions is Galois.  The absolute Galois group of $K$ is the group $G_K:=\Gal(K_{\text{\tiny{sep}}}/K)$.  The group $G_K(p):=\Gal(K(p)/K)$ is the maximal pro-$p$ quotient of $G_K$.  For convenience, we will call $G_K(p)$ the \emph{absolute $p$-Galois group} of $K$.   One of the major open problems in Galois theory is to determine those profinite groups $G$ for which there exists some field $K$ with $G_K \simeq G$; i.e., to distinguish absolute Galois groups within the class of profinite groups.  This problem is very difficult.  The analogous question for pro-$p$ groups --- to distinguish absolute $p$-Galois groups within the class of pro-$p$ groups --- is also unsolved and extremely difficult.  

How does one look for those properties that distinguish absolute $p$-Galois groups from the broader class of pro-$p$ groups?  To motivate the perspective pursued in this paper, note that since $G_K(p)$ is a pro-$p$ group, it is natural to study it recursively through its Frattini subgroup and its quotient.  This quotient is the maximal elementary $p$-abelian quotient of $G_K(p)$, which by Kummer theory (assuming $\xi_p \in K$) corresponds to $J(K):=K^\times/K^{\times p}$.  In the case that $K$ is itself a Galois extension of a field $F$, one then has a natural action of $\text{Gal}(K/F)$ on $J(K)$.  (Throughout the remainder of this discussion we will assume that $\Gal(K/F)$ is a $p$-group, just to stay firmly planted in the context of $p$-groups.)  One field-theoretic lens for studying $G_K(p)$, therefore, is to determine the structure of $J(K)$ as a module over $\mathbb{F}_p[\Gal(K/F)]$.  It is worth noting that the submodules of $J(K)$ are in bijection with the elementary $p$-abelian extensions of $K$ that are additionally Galois over $F$ (see \cite{Waterhouse}), again assuming $\xi_p \in K$.

Given that the modular representation theory of $\mathbb{F}_p[\Gal(K/F)]$ is most tractable when $\Gal(K/F)$ is cyclic, this is a natural place to begin. Some early work by Borevi\v{c} and Fadeev (see \cite{Bo,F}) examined the module structure of $J(K)$ when $K$ is a local field and $\Gal(K/F) \simeq \mathbb{Z}/p\mathbb{Z}$ using local class field theory.  Subsequently, Mina\v{c} and Swallow (\cite{MS}) 
showed that the module $J(K)$ can be computed when $\Gal(K/F) \simeq \mathbb{Z}/p\mathbb{Z}$ assuming only $\xi_p \in K$ and without such heavy machinery.  

The surprise from this result is twofold.  First, despite the fact that the field $K$ is completely general, the $\mathbb{F}_p[\Gal(K/F)]$-module $J(K)$ is far more stratified than a ``random" $\mathbb{F}_p[\mathbb{Z}/p\mathbb{Z}]$-module: whereas a general $\mathbb{F}_p[\mathbb{Z}/p\mathbb{Z}]$-module can have summands drawn from any one of $p$ possible isomorphism types, the decomposition of $J(K)$ as an $\mathbb{F}_p[\Gal(K/F)]$-module involves at most $3$ isomorphism classes of indecomposable summands (free cyclic modules, trivial cyclic modules, and at most one cyclic module of dimension $2$). The second surprise comes from the proof of the result itself.  Though this decomposition requires a lot of careful work, the machinery needed for the proof is actually quite elementary. Indeed, the key theoretical tool in the proof is Hilbert's Satz 90.  

The benefit of an elementary approach to the decomposition of $J(K)$ when $\Gal(K/F) \simeq \mathbb{Z}/p\mathbb{Z}$ and $\xi_p \in K$ isn't just that it lets one compute this module for arbitrary $K$, but also that it provides a roadmap for how one might generalize this decomposition to a broader class of Galois modules.  Indeed, generalizations of this type have been carried out in a variety of contexts.  In \cite{MSS1}, three of the authors gave the decomposition of $J(K)$ whenever $\Gal(K/F) \simeq \mathbb{Z}/p^n\mathbb{Z}$.  Looking past power classes, observe that when $i = 1$ and $\xi_p \in K$ we have $H^i(G_{K}(p),\mathbb{F}_p) \simeq K^\times/K^{\times p}$ as Galois modules, so these cohomology groups provide a new family of Galois modules to investigate.  Using the connection between Milnor $K$-theory and Galois cohomology --- together with the generalization of Hilbert 90 to this context --- two of the authors and N.~Lemire gave a decomposition of the Galois cohomology groups $H^i(G_{K}(p),\mathbb{F}_p)$ in \cite{LMS} under the assumption that $\Gal(K/F) \simeq \mathbb{Z}/p\mathbb{Z}$ and $\xi_p \in K$.  Some partial results for the structure of $H^i(G_K(p),\mathbb{F}_p)$ when $\Gal(K/F) \simeq \mathbb{Z}/p^n\mathbb{Z}$ are given in \cite{LMSS}.   Generalizations to the case where $K$ is characteristic $p$ (but $\Gal(K/F)$ is still assumed to be a cyclic $p$-group) have also been explored in \cite{BS,BLMS,MSS.K.in.char.p,Schultz}.

As with the original decomposition of $J(K)$ in \cite{MS}, these subsequent module decompositions contain far fewer isomorphism classes of indecomposables than one might expect \emph{a priori}.  These stratified decompositions have, in turn, been translated into properties that distinguish absolute $p$-Galois groups within the larger class of pro-$p$ groups.  For example, using the structure of $J(K)$, a variety of automatic realization and realization multiplicity results have been proved (see \cite{BS,CMSHp3,MSS.auto,MS2,Schultz}).  The module structure for cohomology groups computed in \cite{LMS} was used in \cite{BLMS.prop.groups} to find a number of pro-$p$ groups that are not absolute $p$-Galois groups.  

It would be natural to assume that the previous module computations are possible because the modular representation theory for the group ring $\mathbb{F}_p[\Gal(K/F)]$ is simple when $\Gal(K/F)$ is cyclic --- namely, in this case there are $|\Gal(K/F)|$  isomorphism classes of indecomposables, and each of them is cyclic. In contrast, if $G$ is a non-cyclic elementary $p$-abelian group then there are infinitely many isomorphism classes of indecomposable $\mathbb{F}_p[G]$-modules (and often it is impossible to give a full classification of indecomposables). There has been some work which provides partial information about Galois modules in these more complicated settings, recovering information about the Socle series or arguing that the modules are constant Jordan type in  special cases (\cite{AGKM,Eimer,MST}). However, these modules were not determined completely.   

In this paper we provide a decomposition for $J(K)$ as an $\mathbb{F}_2[\Gal(K/F)]$-module when $\Gal(K/F) \simeq \mathbb{Z}/2\mathbb{Z} \oplus \mathbb{Z}/2\mathbb{Z}$, without any restriction on $K$ other than $\text{char}(K) \neq 2$.  The decomposition follows the two themes that have arisen in the context of cyclic Galois groups: the module structure is far more stratified than one would expect for a general module (across all fields $K$, the summands are drawn from at most $9$ indecomposable types), and the decomposition can be determined using relatively concrete techniques and the assistance of Hilbert 90 (see \cite{DMSS} for a discussion on how one interprets Hilbert 90 for biquadratic extensions). Undoubtedly this stratified decomposition --- both the appearance of some summand types and the exclusion of others --- can be translated into new and exciting group-theoretic properties of absolute $2$-Galois groups. The authors are currently looking into such results.

A decomposition of $J(K)$ when $\Gal(K/F) \simeq \mathbb{Z}/2\mathbb{Z} \oplus \mathbb{Z}/2\mathbb{Z}$ was completed by the first, second, and fourth authors in 2005 using more technical machinery. A deeper dive into the module from this perspective was explored in \cite{Ferguson} under 
the joint supervision of Mina\v{c} and Swallow.   The impetus for revisiting this problem using more ubiquitous tools was to give greater insight into how decompositions for $J(K)$ (and its ilk) could be carried out when $\Gal(K/F)$ is some other elementary $p$-abelian group. This approach has already met with success: it has allowed us to exclude one summand type that appeared in the original decomposition from 2005, and it inspired the recent decomposition of the parameterizing space of elementary $p$-abelian extensions of $K$ as a module over $\mathbb{F}_p[\Gal(K/F)]$ whenever $G_F(p)$ is a free pro-$p$ group and $\Gal(K/F)$ is \emph{any} finite $p$-group (see the remark after Theorem \ref{th:main.theorem}).  We are hopeful that the techniques we develop here can inspire the next steps towards investigations of a broader class of elementary $p$-abelian Galois modules.  

\subsection{Statement of Main Result}

We first set terminology that will hold for the balance of the paper.  Suppose that $F$ is a field with $\text{char}(F) \neq 2$ and that $K/F$ is an extension with $G:=\Gal(K/F) \simeq \mathbb{Z}/2\mathbb{Z}\oplus\mathbb{Z}/2\mathbb{Z}$.  Let $a_1,a_2 \in F$ be given so that $K = F(\sqrt{a_1},\sqrt{a_2})$, and let $\sigma_1,\sigma_2 \in \text{Gal}(K/F)$ be their duals; that is, we have $\sigma_i(\sqrt{a_j}) = (-1)^{\delta_{ij}}\sqrt{a_j}$.  For $i \in \{1,2\}$ we define $K_i = F(\sqrt{a_i})$.  Write $H_i$ for the subgroup of $\Gal(K/F)$ which fixes elements in $K_i$, and $\overline{G_i}$ for the corresponding quotient group: $\overline{G_i} := \Gal(K_i/F) = \{\overline{\text{id}},\overline{\sigma_i}\}.$  In the same spirit, write $K_3 = F(\sqrt{a_1a_2})$, denote the subgroup of $\Gal(K/F)$ which fixes $K_3$ as $H_3$, and use $\overline{G_3}$ for the corresponding quotient $G/H_3 = \Gal(K_3/F)$.  To round out the notation, let $H_0 = \{\text{id}\}$ (the elements which fix the extension $K/F$) and $H_4 = \Gal(K/F)$ (the elements which fix the extension $F/F$), and use $\overline{G_0}$ and $\overline{G_4}$ for their quotients. (See Figure \ref{fig:lattice.of.fields}.)

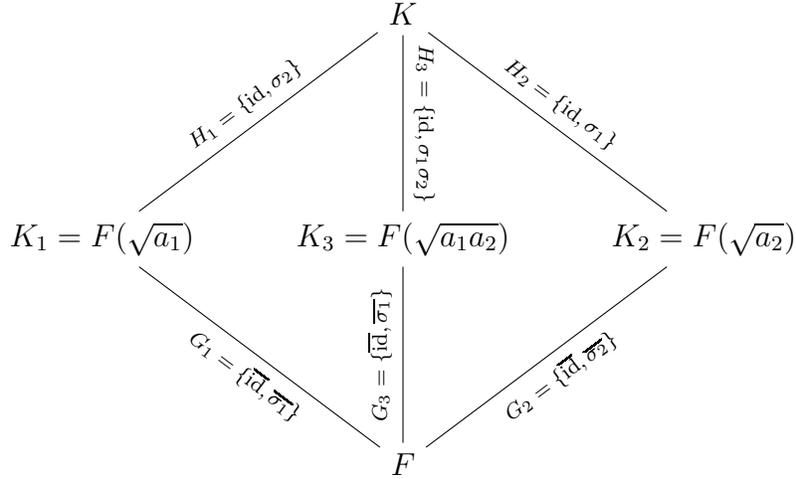
\begin{figure}[h]
        \begin{tikzpicture}[scale=1]
		\node (K) at (0,0) {$K$};
		\node (F) at (0,-6) {$F$};
		\node (K1) at (-4,-3) {$K_1 = F(\sqrt{a_1})$};
		\node (K2) at (4,-3) {$K_2 = F(\sqrt{a_2})$};
		\node (K3) at (0,-3) {$K_3=F(\sqrt{a_1a_2})$};
		\draw[-] (K)-- node[sloped,above] {\tiny $H_1 = \{\text{id},\sigma_2\}$} (K1);
		\draw[-] (K)-- node[sloped,above] {\tiny $H_2=\{\text{id},\sigma_1\}$} (K2);
		\draw[-] (K)-- node[sloped,above] {\tiny $H_3=\{\text{id},\sigma_1\sigma_2\}$} (K3);
		\draw[-] (F)-- node[sloped,below] {\tiny $G_1 = \{\overline{\text{id}},\overline{\sigma_1}\}$} (K1);
		\draw[-] (F)-- node[sloped,below] {\tiny $G_2 = \{\overline{\text{id}},\overline{\sigma_2}\}$} (K2);
		\draw[-] (F)-- node[sloped,above] {\tiny $G_3 = \{\overline{\text{id}},\overline{\sigma_1}\}$} (K3);
        \end{tikzpicture}
        \caption{The lattice of fields for $K/F$, with corresponding Galois groups}\label{fig:lattice.of.fields}
\end{figure}



In our result below, we use $\Omega^{-n}$ and $\Omega^n$ to denote certain indecomposable modules of dimension $2n+1$; more information on these modules can be found in Section \ref{sec:module.stuff}.  

\begin{theorem}\label{th:main.theorem}
\suppressfloats[t]
Suppose that $\text{char}(K) \neq 2$ and that $\Gal(K/F) \simeq \mathbb{Z}/2\mathbb{Z}\oplus\mathbb{Z}/2\mathbb{Z}$.  Let $J(K) = K^\times/K^{\times 2}$.  Then as an $\mathbb{F}_2[\Gal(K/F)]$-module we have
$$J(K) \simeq X \oplus Y_0 \oplus Y_1 \oplus Y_2 \oplus Y_3 \oplus Y_4 \oplus Z_1 \oplus Z_2,$$ where
\begin{itemize}
\item $X$ is isomorphic to one of the following: $\{0\}, \mathbb{F}_2,\mathbb{F}_2\oplus\mathbb{F}_2, \Omega^{-1}, \Omega^{-2},$ or $\Omega^{-1}\oplus\Omega^{-1}$;
\item for each $i \in \{0,1,2,3,4\}$, the summand $Y_i$ is a direct sum of modules isomorphic to $\mathbb{F}_2[\overline{G_i}]$; and
\item for each $i \in \{1,2\}$, the summand $Z_i$ is a direct sum of modules isomorphic to $\Omega^i$.
\end{itemize}
\end{theorem}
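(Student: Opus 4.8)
The plan is to decompose $J(K)$ by working through the lattice of subextensions and exploiting the cyclic-group case already understood via \cite{MS}. First I would set up the key exact sequences that relate $J(K)$ to the $J(K_i)$: for each $i \in \{1,2,3\}$, the norm map $N_{K/K_i}\colon K^\times \to K_i^\times$ and the inclusion $K_i^\times \hookrightarrow K^\times$ interact with squaring to produce maps on power classes, and Hilbert 90 for the biquadratic extension (in the form recorded in \cite{DMSS}) controls the kernels and cokernels. The goal of this first stage is to identify, inside $J(K)$, the ``fixed'' and ``norm'' submodules attached to each of the five subgroups $H_0,\dots,H_4$, and to recognize that the building blocks $\mathbb{F}_2[\overline{G_i}]$ arise exactly as the induced modules $\mathrm{Ind}_{H_i}^{G}\mathbb{F}_2$, so that a summand of type $Y_i$ is forced by a nonzero class that is $H_i$-fixed but not $H_j$-fixed for the relevant $j$.

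Next I would invoke the representation theory of $\mathbb{F}_2[\mathbb{Z}/2 \oplus \mathbb{Z}/2]$ summarized in Section \ref{sec:module.stuff}: although there are infinitely many indecomposables, they are classified (the $\Omega^n$, the $\Omega^{-n}$, the free module, the trivial module, and the three ``induced'' modules $\mathbb{F}_2[\overline{G_i}]$), and each is determined by how the operators $\sigma_1 - 1$ and $\sigma_2 - 1$ act. The strategy is to show that the ``exotic'' indecomposables — those $\Omega^{\pm n}$ with $n \geq 3$, and all the band/string modules of large dimension — simply cannot appear as summands of $J(K)$. This is where the arithmetic input is essential: a class $\bar\alpha \in J(K)$ lying in a large indecomposable summand would force a long chain of compatible elements in the various $K_i^\times$ related by norms and squares, and Hilbert 90 applied repeatedly along $K/K_i/F$ collapses such chains. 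Concretely, I would compute the dimensions of the fixed submodule $J(K)^G$, of $J(K)^{\sigma_i}$ for each $i$, and of the image of each relevant norm, and show these dimensions are consistent only with the summand types listed. The role of the $\Omega^{-1}, \Omega^{-2}$ possibilities for $X$ is to absorb the ``defect'' coming from the fact that $-1$ and the $a_i$ themselves may or may not be squares/norms — this is the part of $J(K)$ not captured by any induced module, and it is bounded in dimension by a short explicit argument.

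After isolating these pieces I would assemble the decomposition by a counting/splitting argument: produce explicit complementary submodules realizing $X$, the $Y_i$, and the $Z_i$, check that their sum is direct (again using that the $H_i$-fixed parts intersect trivially outside the prescribed overlaps), and check that they exhaust $J(K)$ by comparing with the structure of $J(K_i)$ under the $\mathbb{F}_2[\overline{G_i}]$-module decomposition from \cite{MS}, pulled back along $K/K_i$. The multiplicities of the various summands will come out as functions of field-theoretic invariants (ranks of certain norm images, whether $-1$ or $a_1 a_2$ is a square in $K$, etc.), but the theorem only asserts which \emph{types} occur, so I need only the qualitative conclusion.

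The main obstacle I anticipate is the step of \emph{excluding} the infinitely many large indecomposables: it is easy to see which summands must be present, but ruling out, say, $\Omega^5$ or a long string module requires showing that the action of $\sigma_1 - 1$ and $\sigma_2 - 1$ on $J(K)$ is constrained far more tightly than abstract module theory allows — essentially that $J(K)$ has ``length at most $2$'' in the appropriate sense. I expect this to reduce to a careful analysis of the submodule generated by a single power class together with Hilbert 90, showing that $(\sigma_i - 1)^2$ and the commuting relations kill everything past the second layer; getting the bookkeeping exactly right across all five subextensions simultaneously, and handling the interaction with $\sqrt{-1} \in K$ or not, is the delicate heart of the argument.
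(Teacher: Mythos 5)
Your plan stalls exactly at the step you yourself flag as the heart of the matter: excluding the infinitely many remaining indecomposables, and your proposed mechanisms for doing so would fail. In characteristic $2$ one has $(\sigma_i-1)^2=\sigma_i^2+1=0$ identically on any $\mathbb{F}_2[G]$-module, so ``showing $(\sigma_i-1)^2$ kills everything past the second layer'' is vacuous; moreover every indecomposable has Loewy length at most $3$, the modules you must exclude ($\Omega^{\pm n}$ for $n\geq 3$ and the even-dimensional non-cyclic families) have length $2$, while $J(K)$ genuinely contains free summands of length $3$ --- so no ``length at most $2$'' statement can be simultaneously true and sufficient. Likewise, dimension counts of $J(K)^G$, $J(K)^{\langle\sigma_i\rangle}$ and norm images, together with the $\mathbb{F}_2[\Gal(K/K_i)]$-structure of $J(K)$ from \cite{MS} restricted along the three cyclic subgroups, do not determine the $\mathbb{F}_2[G]$-isomorphism type: knowing all restrictions to cyclic subgroups (equivalently, Jordan-type data) is exactly the partial information that earlier work obtained without pinning down the module, as the introduction of the paper points out. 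So the assembly step ``check that they exhaust $J(K)$ by comparing with \cite{MS}'' has no valid mechanism behind it, and your outline never predicts the specific list of types (in particular $\Omega^{2}$, $\Omega^{-2}$, $\Omega^{-1}\oplus\Omega^{-1}$) nor certifies that nothing else occurs.

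For comparison, the paper's proof is constructive and element-wise rather than a counting argument. It builds two explicit submodules: first $\hat J$ with $\hat J^G=[F^\times]$, obtained from a filtration of $[F^\times]$ by solvability of explicit diagrams (the subspaces $\mathfrak{A}\subseteq\mathfrak{V}\subseteq\mathfrak{B},\mathfrak{C}$ and $\mathfrak{D}$), which is where the $\Omega^{1}$ and $\Omega^{2}$ summands arise; second, a module $X$ whose fixed part surjects onto the image of the norm-residue map $T([\gamma])=([N_{K/K_1}(\gamma)]_1,[N_{K/K_2}(\gamma)]_2,[N_{K/K_3}(\gamma)]_3)$, whose kernel is exactly $[F^\times]$, with the six possible shapes of $X$ coming from an embedding-problem analysis ($\mathbb{Z}/4\mathbb{Z}$, $D_4$, $Q_8$ conditions) combined with Wadsworth's biquadratic Hilbert 90. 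Directness of $\tilde J\oplus X$ (where $\tilde J$ is $\hat J$ with possibly one trivial summand removed) follows from the lemma that submodules intersect trivially if and only if their fixed parts do, and exhaustion is proved by taking an arbitrary $[\gamma]\in J(K)$, observing that the cyclic module $\langle[\gamma]\rangle$ has one of only six isomorphism types, and in each case multiplying $[\gamma]$ by elements of $\tilde J$ and $X$ to reduce to a previously handled case; the exclusion of all other indecomposable types is then automatic rather than argued separately. To repair your proposal you would need, at minimum, an invariant playing the role of $T$ and a filtration of $[F^\times]$ sensitive to the images of $1+\sigma_1$, $1+\sigma_2$ and $N_{K/F}$, neither of which appears in your outline.
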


\begin{remark*}When $\text{char}(K) = 2$, elementary $2$-abelian extensions of $K$ are parameterized by $\mathbb{F}_2$-subspaces of $K/\wp{K}$, where $\wp{K} = \{k^2-k: k \in K\}$.  It is therefore natural to ask whether $K/\wp(K)$ can be decomposed as an $\mathbb{F}_2[\Gal(K/F)]$-module as well.  The answer is a resounding ``yes." In fact, this is a special case of a far more general theorem from the forthcoming paper \cite{HMNST}: for any prime $p$ and any Galois extension $K/F$ so that $\Gal(K/F)$ is a finite $p$-group, if $G_F(p)$ is a free pro-$p$ group, then the parameterizing space of elementary $p$-abelian extensions of $K$ decomposes into a free summand and a single summand isomorphic to $\Omega^{-2}_{\Gal(K/F)}$. 
\end{remark*}

\subsection{Outline of paper}

In Section \ref{sec:module.stuff} we review some basic facts concerning modules over $\mathbb{F}_2[\mathbb{Z}/2\mathbb{Z}\oplus\mathbb{Z}/2\mathbb{Z}]$
.  Section \ref{sec:F.spanning.candidate} is devoted to producing a ``large" module whose fixed part is the ``obvious" componenent $[F^\times]$ within $J(K)^G$; the key is to give a filtration of $[F^\times]$ that is sensitive to image subspaces coming from particular elements of $\mathbb{F}_2[G]$. Section \ref{sec:X.candidate} aims to find a module whose fixed part spans a complement to $[F^\times]$ in $J(K)^G$.  This requires a deeper understanding of how $J(K)^G$ behaves under the norm maps associated to the intermediate extensions $K/K_i$ (for $i \in \{1,2,3\}$). The proof of Lemma \ref{le:compatible.supernorms.means.preimage.exists.somewhere} gives our first appearance of a Hilbert 90 result for biquadratic extensions.  Section \ref{sec:proof.of.main.result} has another result related to Hilbert 90 for biquadratic extensions (Lemma \ref{le:chain.breaking.lemma}), as well as the proof of Theorem \ref{th:main.theorem}. In Section \ref{sec:realizability} we discuss the realizability of some of the possibilities for the $X$ summand in terms of the solvability (or non-solvability) of particular embedding problems.

\subsection{Acknowledgements}

We gratefully acknowledge discussions and collaborations with our friends and colleagues D.~Benson, B.~Brubaker, J.~Carlson, S.~Chebolu, I.~Efrat, J.~G\"{a}rtner, S.~Gille, L.~Heller, D.~Hoffmann, J.~Labute, T.-Y.~Lam, R.~Sharifi, N.D.~Tan, A.~Topaz, R.~Vakil, K.~Wickelgren, O.~Wittenberg, which have influenced our work in this and related papers. We are particularly grateful to A.~Eimer and P.~Guillot for their careful consideration of a previous draft of this manuscript which omitted $\Omega^2$ summands.

\section{A Primer in Diagramatic Thinking in Module Theory}\label{sec:module.stuff}

We will use $G$ to denote the Klein $4$-group with generators $\sigma_1$ and $\sigma_2$.  When $M$ is an $\mathbb{F}_2[G]$-module, we assume that $M$'s structure is multiplicative, so that the module action is written exponentially.  Despite this, if $U,V$ are submodules of a larger $\mathbb{F}_2[G]$-module $W$, we will still write $U+V$ for the set $\{uv: u \in U, v \in V\}$, and we will use $U \oplus V$ to indicate this set when $U \cap V$ is trivial.

Throughout this paper we will be considering the solvability of certain systems of equations within various $\mathbb{F}_2[G]$-modules.  Though one could of course write these systems out, it will often be convenient to have graphical representations for the equations.  We adopt the convention that an arrow between elements denotes that one is the image of another through some given element of $F_2[G]$, with the direction of the arrow indicating the acting element from $\mathbb{F}_2[G]$.  If the arrow points down and to the left, this indicates that the bottom element is the image of the upper element under $1+\sigma_2$, and likewise if the arrow points down and to the right this means the lower element is the image of the upper element under $1+\sigma_1$.  In the event that the action of $1+\sigma_1$ and $1+\sigma_2$ is the same on a given element, then we write the image immediately below, and use two bent arrows to signify the equality of the two actions.   Figure \ref{fig:cyclic.module.depictions} gives some basic examples.

\begin{figure}[!ht]
\centering
\begin{subfigure}[!ht]{.3\textwidth}
\centering
\begin{tikzpicture}[scale=.75]
\node (M1) at (0,0) {$\alpha$};
\node (M2) at (-2,-2) {$\alpha_1$};
\draw[->] (M1) --  node[sloped,above] {\tiny $1+\sigma_2$}(M2);
\end{tikzpicture} 
\end{subfigure}
\begin{subfigure}[h]{.3\textwidth}
  \centering
\begin{tikzpicture}[scale=.75]
\node (M1) at (0,0) {$\beta$};
\node (M2) at (0,-2) {$\beta_1$};
\draw[->, out=225, in=135] (M1) to node[sloped,above] {\tiny $1+\sigma_2$}(M2);
\draw[->, out=315, in=45] (M1) to node[sloped,above] {\tiny $1+\sigma_1$}(M2);
\end{tikzpicture}
\end{subfigure}
\begin{subfigure}[h]{.3\textwidth}
  \centering
\begin{tikzpicture}[scale=.75]
\node (M1) at (0,0) {$\gamma$};
\node (M2) at (-2,-2) {$\gamma_1$};
\node (M3) at (2,-2) {$\gamma_2$};
\draw[->] (M1) to node[sloped,above] {\tiny $1+\sigma_2$} (M2);
\draw[->] (M1) -- node[sloped,above] {\tiny $1+\sigma_1$}(M3);
\end{tikzpicture}
\end{subfigure}
\caption{A sampling of linear equations.  On the left we have the relation $\alpha^{1+\sigma_2} = \alpha_1$; in the middle we have the simultaneous equations in  $\beta$ and $\beta_1$ given by $\beta^{1+\sigma_1} = \beta^{1+\sigma_2} = \beta_1$; and on the right we have the simultaneous equations in $\gamma,\gamma_1,\gamma_2$ given by $\gamma^{1+\sigma_2} = \gamma_1$ and $\gamma^{1+\sigma_1} = \gamma_2$.}
\label{fig:cyclic.module.depictions}
\end{figure}
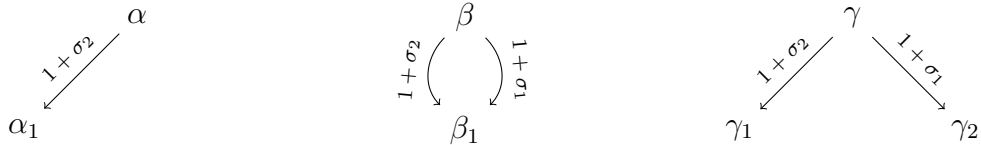

Since these diagrams represent simultaneous linear equations in the module, we will say that a solution to a system of equations is a solution to the corresponding diagram; if we have some fixed values for particular parameters in a system of equations, and there exist values for the remaining parameters so that the underlying system is solved, then we will say that the diagram is solvable for those (original) fixed values.  For example, to say that the diagram on the left side of Figure \ref{fig:cyclic.module.depictions} is solvable for some particular $\alpha_1$ is equivalent to saying that $\alpha_1$ is in the image of $1+\sigma_1$.

Our decomposition will not require us to have a classification of indecomposable $\mathbb{F}_2[G]$-modules, but for the reader's benefit we review some basic information about these modules. For a full treatment, the reader can consult \cite[Theorem 4.3.3]{Benson}.  There are seven ideals in the ring $\mathbb{F}_2[\mathbb{Z}/2\mathbb{Z}\oplus\mathbb{Z}/2\mathbb{Z}]$, and hence six cyclic, non-trivial indecomposable submodule classes.  Aside from the even-dimensional cyclic modules, there are also families of indecomposable even-dimensional $\mathbb{F}_2[G]$-modules that correspond to certain rational canonical form matrices.  These will not appear in our decomposition.  There are also odd-dimensional indecomposable $\mathbb{F}_2[G]$-modules: for each odd number $2n+1$ with $n>1$, there are two irreducible $\mathbb{F}_2[G]$-modules of dimension $n$, denoted $\Omega^n$ and $\Omega^{-n}$.  As it happens, our decomposition of $J(K)$ will only require the cyclic modules we have already introduced together with $\Omega^{1}, \Omega^{2}, \Omega^{-1}$, and $\Omega^{-2}$.  We will need formal definitions for these latter modules, but there is no additional cost to define $\Omega^{n}$ and $\Omega^{-n}$ in general.  Using our depiction scheme, these modules are shown in Figure \ref{fig:noncyclic.module.types}. 
\begin{figure}[!h]
\begin{tikzpicture}[scale=.65, every node/.style={scale=1}]
\node [font=\fontsize{17pt}{17pt}\selectfont] at (-6,-1) {$\Omega^{-n}:$};
\node (A1) at (0,0) {$\alpha_1$};
\node (A2) at (4,0) {$\alpha_2$};
\node (A3) at (8,0) {\ };
\node at (9,-1) {$\cdots$};
\node (An1) at (10,0) {\ };
\node (An) at (14,0) {$\alpha_n$};
\node (B1) at (-2,-2) {$\beta_1$};
\node (B2) at (2,-2) {$\beta_2$};
\node (B3) at (6,-2) {$\beta_3$};
\node (Bn) at (12,-2) {$\beta_n$};
\node (Bn1) at (16,-2) {$\beta_{n+1}$};
\draw[->] (A1) to node[sloped,above] {\tiny $1+\sigma_2$}(B1);
\draw[->] (A1) to node[sloped,above] {\tiny $1+\sigma_1$}(B2);
\draw[->] (A2) to node[sloped,above] {\tiny $1+\sigma_2$}(B2);
\draw[->] (A2) to node[sloped,above] {\tiny $1+\sigma_1$}(B3);
\draw[->] (A3) to node[sloped,above] {\tiny $1+\sigma_2$}(B3);
\draw[->] (An1) to node[sloped,above] {\tiny $1+\sigma_1$} (Bn);
\draw[->] (An) to node[sloped,above] {\tiny $1+\sigma_2$}(Bn);
\draw[->] (An) to node[sloped,above] {\tiny $1+\sigma_1$} (Bn1);

\node [font=\fontsize{17pt}{17pt}\selectfont] at (-6,-5) {$\Omega^{n}:$};
\node (C1) at (-2,-4) {$\gamma_1$};
\node (C2) at (2,-4) {$\gamma_2$};
\node (C3) at (6,-4) {$\gamma_3$};
\node at (9,-5) {$\cdots$};
\node (Cn) at (12,-4) {$\gamma_n$};
\node (Cn1) at (16,-4) {$\gamma_{n+1}$};
\node (D0) at (-4,-6) {$1$};
\node (D1) at (0,-6) {$\delta_1$};
\node (D2) at (4,-6) {$\delta_2$};
\node (D3) at (8,-6) {\ };
\node (Dnm1) at (10,-6) {\ };
\node (Dn) at (14,-6) {$\delta_{n}$};
\node (Dnp1) at (18,-6) {$1$};
\draw[->] (C1) to node[sloped,above] {\tiny $1+\sigma_2$}(D0);
\draw[->] (C1) to node[sloped,above] {\tiny $1+\sigma_1$}(D1);
\draw[->] (C2) to node[sloped,above] {\tiny $1+\sigma_2$}(D1);
\draw[->] (C2) to node[sloped,above] {\tiny $1+\sigma_1$}(D2);
\draw[->] (C3) to node[sloped,above] {\tiny $1+\sigma_2$}(D2);
\draw[->] (C3) to node[sloped,above] {\tiny $1+\sigma_1$} (D3);
\draw[->] (Cn) to node[sloped,above] {\tiny $1+\sigma_2$}(Dnm1);
\draw[->] (Cn) to node[sloped,above] {\tiny $1+\sigma_1$} (Dn);
\draw[->] (Cn1) to node[sloped,above] {\tiny $1+\sigma_2$}(Dn);
\draw[->] (Cn1) to node[sloped,above] {\tiny $1+\sigma_1$}(Dnp1);
\end{tikzpicture}
\caption{The two indecomposable $\mathbb{F}_2[G]$-modules of odd dimension $2n+1$}
\label{fig:noncyclic.module.types}
\end{figure}
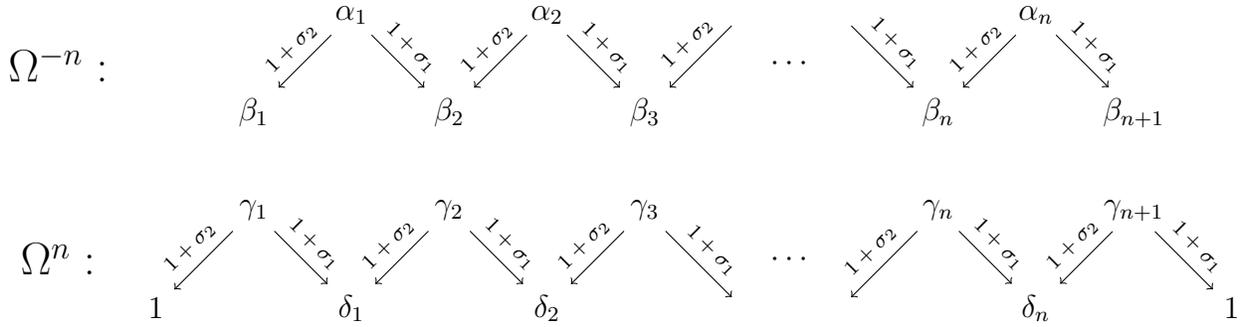


One key fact we'll use about $\mathbb{F}_2[G]$-module is that we can detect independence of two $\mathbb{F}_2[G]$-modules by examining the independence of their fixed parts.  We follow the standard convention of writing $M^G$ for the fixed submodule of an $\mathbb{F}_2[G]$-module $M$.  
\begin{lemma}\label{le:exclusion.lemma}
Suppose that $M$ and $N$ are submodules of a larger $\mathbb{F}_2[G]$-module $W$.  Then $M \cap N = \{1\}$ if and only if $M^G \cap N^G = \{1\}$.
\end{lemma}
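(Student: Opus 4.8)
The plan is to reduce the statement to a standard fixed-point fact about $p$-groups. The forward implication is immediate: a group element lies in $(M \cap N)^G$ exactly when it lies in both $M$ and $N$ and is fixed by $G$, so $M^G \cap N^G = (M \cap N)^G \subseteq M \cap N$, and therefore $M \cap N = \{1\}$ forces $M^G \cap N^G = \{1\}$.

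For the converse I would argue contrapositively: assuming $M \cap N \neq \{1\}$, I will produce a nontrivial element of $(M \cap N)^G = M^G \cap N^G$. The tool here is the well-known fact that a nonzero $\mathbb{F}_p[G]$-module for a finite $p$-group $G$ has nontrivial $G$-fixed points; for the Klein $4$-group this admits a completely explicit two-step descent, which I would spell out because $M \cap N$ (and ultimately $J(K)$) need not be finite-dimensional, so a counting proof of the fixed-point lemma is not directly available on $W$ itself. Pick $x \in (M \cap N) \setminus \{1\}$, and recall that $M \cap N$ is an $\mathbb{F}_2[G]$-submodule of $W$, so any element obtained by applying an element of $\mathbb{F}_2[G]$ to $x$ again lies in $M \cap N$. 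If $x^{1+\sigma_1} = x^{1+\sigma_2} = 1$, then $x$ is fixed by $\sigma_1$ and $\sigma_2$, hence by $G$, and $x$ is the desired element. Otherwise, after possibly interchanging $\sigma_1$ and $\sigma_2$, the element $y := x^{1+\sigma_1}$ is nontrivial, and since $(1+\sigma_1)^2 = 0$ in $\mathbb{F}_2[G]$ (using $\sigma_1^2 = 1$ and $\text{char} \neq 2$) we get $y^{1+\sigma_1} = 1$; if moreover $y^{1+\sigma_2} = 1$ then $y \in (M\cap N)^G \setminus \{1\}$, and if $z := y^{1+\sigma_2} \neq 1$ then $z^{1+\sigma_1} = (y^{1+\sigma_1})^{1+\sigma_2} = 1$ and $z^{1+\sigma_2} = y^{(1+\sigma_2)^2} = 1$, so $z$ is a nontrivial $G$-fixed element of $M \cap N$.

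I expect no serious obstacle here; the only point that deserves care is to run the explicit descent (equivalently, to pass first to the finite-dimensional cyclic submodule $\mathbb{F}_2[G]\cdot x$ and apply the counting version of the fixed-point lemma there) rather than invoking a form of that lemma that silently assumes $W$ is finite-dimensional. Since this lemma is the workhorse for detecting direct-sum decompositions throughout the paper, keeping the hypotheses on $W$ minimal is worthwhile.
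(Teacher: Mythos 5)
Your proof is correct, and its substance matches the paper's: both arguments reduce the converse to producing a nontrivial $G$-fixed element inside $M \cap N$ by applying $1+\sigma_1$, $1+\sigma_2$, or their product to a nontrivial element. The difference is in packaging. The paper observes that the cyclic submodule $\langle w\rangle$ generated by a nontrivial $w \in M\cap N$ must be isomorphic to one of the six nontrivial cyclic $\mathbb{F}_2[G]$-module types ($\mathbb{F}_2$, the three $\mathbb{F}_2[\overline{G_i}]$, $\Omega^{-1}$, $\mathbb{F}_2[G]$) and names the appropriate fixed element in each case, whereas you run a uniform two-step descent $x \mapsto x^{1+\sigma_1} \mapsto x^{(1+\sigma_1)(1+\sigma_2)}$ justified only by $(1+\sigma_i)^2=0$ and commutativity of $\mathbb{F}_2[G]$; this buys you independence from the classification of cyclic modules and makes explicit (as you note) that no finite-dimensionality of $W$ is needed, while the paper's case analysis has the side benefit of recording exactly which element of the augmentation ideal detects each cyclic type, a bookkeeping point it reuses later. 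One small correction: your parenthetical reason for $(1+\sigma_1)^2=0$ should be that the coefficients lie in $\mathbb{F}_2$, i.e.\ $2=0$ in the group ring (characteristic $2$), not ``$\mathrm{char}\neq 2$''; the identity itself, and hence the argument, is fine.
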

\begin{proof}
Of course if $M \cap N = \{1\}$, then $M^G \cap N^G = \{1\}$ as well.  Suppose, then, that $M^G \cap N^G = \{1\}$, and let $w \in M \cap N$ be given.  If $w$ is nontrivial, then $\langle w \rangle$ is isomorphic to precisely one of the following: $\mathbb{F}_2$, $\mathbb{F}_2[\overline{G_1}]$, $\mathbb{F}_2[\overline{G_2}]$, $\mathbb{F}_2[\overline{G_3}]$, $\Omega^{-1}$, or $\mathbb{F}_2[G]$.  In the first case we have $w \in W^G$, and so $w \in M^G \cap N^G = \{1\}$; this is a contraction.  If either $\langle w \rangle \simeq \mathbb{F}_2[\overline{G_1}]$, $\langle w\rangle \simeq \mathbb{F}_2[\overline{G_3}]$, or $\langle w\rangle \simeq  \Omega^{-1}$, then $w^{1+\sigma_1}$ is a nontrivial element in $W^G$; but this again leads to a contradiction, since then we again have $w^{1+\sigma_1} \in M^G \cap N^G = \{1\}$.  If $\langle w \rangle \simeq \mathbb{F}_2[\overline{G_2}]$, then $w^{1+\sigma_2}$ is the nontrivial element in $W^G$ which leads to a contradiction, and if $\langle w \rangle \simeq \mathbb{F}_2[G]$ then the contradictory nontrivial element of $W^G$ is $w^{(1+\sigma_1)(1+\sigma_2)}$.
\end{proof}

\section{A maximal submodule with fixed part $[F^\times]$}\label{sec:F.spanning.candidate}

In Section \ref{sec:module.stuff} we saw that fixed submodules play an important role in determining independence amongst $\mathbb{F}_2[G]$-modules.  Of course the most natural fixed submodule of $J(K)$ is $[F^\times]$.  Our objective in this section will be to find a ``sufficiently large" submodule $\hat J$ of $J(K)$ for which $\hat J^G = [F^\times]$.  For the purposes of the decomposition that we are building, being ``sufficiently large" will mean that $\hat J$ contains solutions to certain systems of equations, assuming such equations have solutions within the full module $J(K)$.  

In a certain sense, we are most interested in finding free summands --- by which we mean free over $\mathbb{F}_2[\overline{G_i}]$ for some $i \in \{0,1,2,3,4\}$) --- with the general philosophy that larger submodules are preferrable.  Hence primary preference goes to free (cyclic) $\mathbb{F}_2[G]$-modules, and secondary preference goes to free (cyclic) $\mathbb{F}_2[\overline{G_i}]$-modules for $i \in \{1,2,3\}$; for concreteness, we give preference to $i=1$ over $i=2$, and $i=2$ over $i=3$.  We finish with free $\mathbb{F}_2[\overline{G_4}]$-modules (i.e., trivial modules).  

The issue in pursuing this agenda is that there are potential interrelations between these free modules.  For example, suppose a free cyclic $\mathbb{F}_2[\overline{G_1}]$-module $\langle [\gamma_1]\rangle$ and a free cyclic $\mathbb{F}_2[\overline{G_2}]$-module $\langle [\gamma_2]\rangle$ share the same fixed submodule $\langle [f]\rangle$.  This means that $[f],[\gamma_1],$ and $[\gamma_2]$ satisfy the system of equations 
$$
\begin{tikzpicture}[scale=0.65]
\node (B1) at (10,0) {$[\gamma_1]$};
\node (B2) at (14,0) {$[\gamma_2]$};
\node (B0) at (12,-2) {$[f]$};
\node (Bt1) at (8,-2) {$[1]$};
\node (Bt2) at (16,-2) {$[1].$};

\draw[->] (B1) to (B0);
\draw[->] (B2) to (B0);
\draw[->] (B1) to (Bt1);
\draw[->] (B2) to (Bt2);
\end{tikzpicture}
$$
Hence in our pursuit of free submodules, we are obliged to look for solutions to this type of system and ensure our decomposition of $[F^\times]$ captures these elements.  

With all this in mind, let us move toward statements that are more precise.  In Figure \ref{fig:first.filtration.diagrams} we introduce 5 subspaces of $[F^\times]$ that capture the ideas we alluded to in the previous paragraphs.  We denote these spaces $\mathfrak{A},\mathfrak{V},\mathfrak{B},\mathfrak{C},$ and $\mathfrak{D}$.  For $\mathfrak{M} \in \{\mathfrak{A},\mathfrak{V},\mathfrak{B},\mathfrak{C},\mathfrak{D}\}$, the space $\mathfrak{M}$ is the set of all $[f] \in [F^\times]$ for which the corresponding diagram from Figure \ref{fig:first.filtration.diagrams} is solvable for $[f]$.  For example, an element $[f] \in [F^\times]$ is an element of $\mathfrak{A}$ if and only if $[f] \in [N_{K/F}(K^\times)]$ (since $N_{K/F}$ is given by applying $(1+\sigma_1)(1+\sigma_2)$).

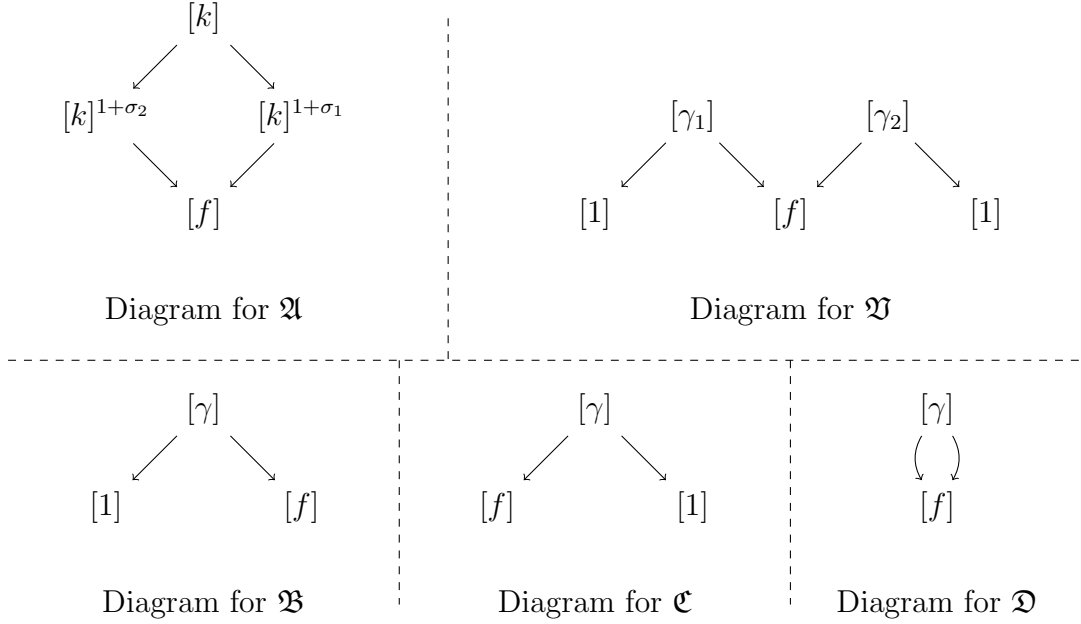
\begin{figure}[!ht]
\begin{tikzpicture}[scale=.65]
\node (A3) at (0,2) {$[k]$};
\node (A1) at (-2,0) {$[k]^{1+\sigma_2}$};
\node (A2) at (2,0) {$[k]^{1+\sigma_1}$};
\node (A0) at (0,-2) {$[f]$};
\node at (0,-4) {Diagram for $\mathfrak{A}$};

\draw[->] (A3) to (A1);
\draw[->] (A3) to (A2);
\draw[->] (A1) to (A0);
\draw[->] (A2) to (A0);

\draw[dashed] (5,2) -- (5,-5);

\node (B1) at (10,0) {$[\gamma_1]$};
\node (B2) at (14,0) {$[\gamma_2]$};
\node (B0) at (12,-2) {$[f]$};
\node (Bt1) at (8,-2) {$[1]$};
\node (Bt2) at (16,-2) {$[1]$};
\node at (12,-4) {Diagram for $\mathfrak{V}$};

\draw[->] (B1) to (B0);
\draw[->] (B2) to (B0);
\draw[->] (B1) to (Bt1);
\draw[->] (B2) to (Bt2);

\draw[dashed] (-4,-5) -- (18,-5);

\node (C1) at (0,-6) {$[\gamma]$};
\node (Ct) at (-2,-8) {$[1]$};
\node (C2) at (2,-8) {$[f]$};
\node at (0,-10) {Diagram for $\mathfrak{B}$};

\draw[->] (C1) to (C2);
\draw[->] (C1) to (Ct);

\draw[dashed] (4,-5) -- (4,-10);

\node (D1) at (8,-6) {$[\gamma]$};
\node (D2) at (6,-8) {$[f]$};
\node (Dt) at (10,-8) {$[1]$};
\node at (8,-10) {Diagram for $\mathfrak{C}$};

\draw[->] (D1) to (D2);
\draw[->] (D1) to (Dt);

\draw[dashed] (12,-5) -- (12,-10);

\node (E1) at (15,-6) {$[\gamma]$};
\node (E2) at (15,-8) {$[f]$};
\node at (15,-10) {Diagram for $\mathfrak{D}$};

\draw[->, out=240, in=120] (E1) to (E2);
\draw[->, out=300, in=60] (E1) to (E2);
\end{tikzpicture}
\caption{Diagrams that represent the various systems of equations that are solvable in order for $[f]$ to be an element of the subspaces $\mathfrak{A},\mathfrak{V},\mathfrak{B},\mathfrak{C}$ or $\mathfrak{D}$.}
\label{fig:first.filtration.diagrams}
\end{figure}

It is readily apparent that $\mathfrak{A} \subseteq \mathfrak{V}$, and furthermore that $\mathfrak{V}$ is a subspace of both $\mathfrak{B}$ and $\mathfrak{C}$.  We just observed that $\mathfrak{B} \cap \mathfrak{C} = \mathfrak{V}$.  Continuing in the theme of being careful about interrelations that exist between these subspaces, the following lemma considers how elements of $\mathfrak{D}$ are related to elements from $\mathfrak{B}+\mathfrak{C}$.

\begin{lemma}\label{le:characterizing.W.elements}
Let $\mathfrak{B},\mathfrak{C}$, and $\mathfrak{D}$ be defined as in Figure \ref{fig:first.filtration.diagrams}.  Then $[b][c] \in (\mathfrak{B}+\mathfrak{C}) \cap \mathfrak{D}$ if and only if 

\begin{equation}\label{eq:W.diagram}
\begin{tikzpicture}[scale=0.65]
\node (Ut1) at (-4,0) {$[\gamma_1]$};
\node (Ut2) at (0,0) {$[\gamma_2]$};
\node (Ut3) at (4,0) {$[\gamma_3]$};
\node (Ub1) at (-2,-2) {$[b]$};
\node (Ub2) at (2,-2) {$[c]$};
\node (Ub0) at (-6,-2) {$[1]$};
\node (Ub3) at (6,-2) {$[1]$};

\draw[->] (Ut1) to (Ub0);
\draw[->] (Ut1) to (Ub1);
\draw[->] (Ut2) to (Ub1);
\draw[->] (Ut2) to (Ub2);
\draw[->] (Ut3) to (Ub2);
\draw[->] (Ut3) to (Ub3);
\end{tikzpicture}
\end{equation}

is solvable for some $[\gamma_1],[\gamma_2],[\gamma_3] \in J(K)$.
\end{lemma}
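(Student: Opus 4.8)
The plan is to unwind the diagrams of Figure~\ref{fig:first.filtration.diagrams} and of \eqref{eq:W.diagram} into explicit systems of equations in $J(K)$, and then to observe that the asserted equivalence reduces, in each direction, to a single ``gluing'' computation that uses nothing beyond the fact that $J(K)$ is an $\mathbb{F}_2$-vector space, so that $[x]^2 = [1]$ for every $[x] \in J(K)$.

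First I would record the translations coming from the arrow conventions. They say that \eqref{eq:W.diagram} asserts the existence of $[\gamma_1],[\gamma_2],[\gamma_3] \in J(K)$ with
\begin{gather*}
[\gamma_1]^{1+\sigma_2} = [1], \qquad [\gamma_1]^{1+\sigma_1} = [b], \qquad [\gamma_2]^{1+\sigma_2} = [b], \\
[\gamma_2]^{1+\sigma_1} = [c], \qquad [\gamma_3]^{1+\sigma_2} = [c], \qquad [\gamma_3]^{1+\sigma_1} = [1].
\end{gather*}
Here the first two equations are precisely the $\mathfrak{B}$-diagram (with $[\gamma] = [\gamma_1]$ and $[f] = [b]$), the last two are precisely the $\mathfrak{C}$-diagram (with $[\gamma] = [\gamma_3]$ and $[f] = [c]$), and for any candidate $[f]$ the $\mathfrak{D}$-diagram asks for a $[\gamma]$ with $[\gamma]^{1+\sigma_1} = [\gamma]^{1+\sigma_2} = [f]$. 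I take $[b]\in\mathfrak{B}$ and $[c]\in\mathfrak{C}$ as given, so that $[b][c]\in\mathfrak{B}+\mathfrak{C}$ automatically and the equivalence amounts to deciding when, in addition, $[b][c]\in\mathfrak{D}$.

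For the forward direction I would assume $[\gamma_1],[\gamma_2],[\gamma_3]$ solve \eqref{eq:W.diagram} and set $[\gamma] := [\gamma_1][\gamma_2][\gamma_3]$: applying $1+\sigma_2$ gives $[1]\cdot[b]\cdot[c] = [b][c]$ and applying $1+\sigma_1$ gives $[b]\cdot[c]\cdot[1] = [b][c]$, so $[\gamma]$ solves the $\mathfrak{D}$-diagram for $[b][c]$ and hence $[b][c]\in(\mathfrak{B}+\mathfrak{C})\cap\mathfrak{D}$. (The $\mathfrak{B}+\mathfrak{C}$ membership is also visible directly, since $[\gamma_1]$ witnesses $[b]\in\mathfrak{B}$ and $[\gamma_3]$ witnesses $[c]\in\mathfrak{C}$.) For the converse I would take witnesses $[\gamma_1]$ for $[b]\in\mathfrak{B}$ and $[\gamma_3]$ for $[c]\in\mathfrak{C}$ as above, together with a witness $[\epsilon]$ for $[b][c]\in\mathfrak{D}$, i.e.\ $[\epsilon]^{1+\sigma_1} = [\epsilon]^{1+\sigma_2} = [b][c]$, and set $[\gamma_2] := [\gamma_1][\gamma_3][\epsilon]$. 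Using $[b]^2 = [c]^2 = [1]$ one gets $[\gamma_2]^{1+\sigma_2} = [1]\cdot[c]\cdot[b][c] = [b]$ and $[\gamma_2]^{1+\sigma_1} = [b]\cdot[1]\cdot[b][c] = [c]$, so $[\gamma_1],[\gamma_2],[\gamma_3]$ solve \eqref{eq:W.diagram}.

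I do not expect a genuine obstacle here: the entire argument is the production of the two gluing elements $[\gamma_1][\gamma_2][\gamma_3]$ and $[\gamma_1][\gamma_3][\epsilon]$, and the only points demanding care are keeping straight which of $\sigma_1,\sigma_2$ each arrow encodes and remembering that every class in $J(K)$ is its own inverse. In particular, unlike the lemmas of later sections, this one needs no Hilbert~90 input.
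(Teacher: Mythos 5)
Your proof is correct and is essentially the paper's own argument: the forward direction takes the product $[\gamma_1][\gamma_2][\gamma_3]$ as the $\mathfrak{D}$-witness for $[b][c]$, and the converse glues the $\mathfrak{B}$-, $\mathfrak{C}$-, and $\mathfrak{D}$-witnesses into $[\gamma_2]=[\gamma_1][\gamma_3][\epsilon]$, which is exactly the paper's choice $[\gamma_2]=[\gamma_L][\gamma][\gamma_R]$. Your explicit remark that $[b]\in\mathfrak{B}$ and $[c]\in\mathfrak{C}$ are taken as given matches the implicit reading in the paper's proof, so there is nothing to add.
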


\begin{proof}
Suppose first that Equation (\ref{eq:W.diagram}) holds.  From this we see that 
\begin{align*}
[\gamma_1][\gamma_2][\gamma_3]^{1+\sigma_2} &= [1][b][c]\\
[\gamma_1][\gamma_2][\gamma_3]^{1+\sigma_1} &= [b][c][1].
\end{align*}
Hence $[b][c] \in (\mathfrak{B}+\mathfrak{C})\cap \mathfrak{D}$.

For the other direction, suppose there exists $[\gamma] \in J(K)$ so that the diagram for $\mathfrak{D}$ holds with $[\gamma]$ and $[f] = [b][c]$.  Since $[b] \in \mathfrak{B}$ and $[c] \in \mathfrak{C}$, we also have elements $[\gamma_L],[\gamma_R] \in J(K)$ so that $[\gamma_L]$ and $[b]$ satisfy the diagram for $\mathfrak{B}$, and $[\gamma_R]$ and $[c]$ satisfy the diagram for $\mathfrak{C}$.  From these relations we find that Equation \ref{eq:W.diagram} is satisfied with $[\gamma_1] = [\gamma_L]$, $[\gamma_2] = [\gamma_L][\gamma][\gamma_R]$, and $[\gamma_3] = [\gamma_R]$.
\end{proof}

Notice that if $[f]$ and $[\hat f]$ are elements of $\mathfrak{V}$ (with corresponding elements $[\gamma_L],[\gamma_R]$ solving the diagram with $[f]$, and $[\widehat{\gamma_L}],[\widehat{\gamma_R}]$ solving the diagram for $[\hat f]$), then Equation \ref{eq:W.diagram} is solvable for $[b]=[f]$ and $[c]=[\hat f]$:

$$
\begin{tikzpicture}[scale=0.65]
\node (Ut1) at (-4,0) {$[\gamma_L]$};
\node (Ut2) at (0,0) {$[\gamma_R][\widehat{\gamma_L}]$};
\node (Ut3) at (4,0) {$[\widehat{\gamma_R}]$};
\node (Ub1) at (-2,-2) {$[f]$};
\node (Ub2) at (2,-2) {$[\hat f]$};
\node (Ub0) at (-6,-2) {$[1]$};
\node (Ub3) at (6,-2) {$[1].$};

\draw[->] (Ut1) to (Ub0);
\draw[->] (Ut1) to (Ub1);
\draw[->] (Ut2) to (Ub1);
\draw[->] (Ut2) to (Ub2);
\draw[->] (Ut3) to (Ub2);
\draw[->] (Ut3) to (Ub3);
\end{tikzpicture}
$$
Hence we will be particularly interested in understanding solutions to Equation (\ref{eq:W.diagram}) that come from outside $\mathfrak{V}$.  The following lemma characterizes such solutions.

\begin{lemma}\label{le:BW.and.CW}
Suppose that $B$ is a complement to $\mathfrak{V}$ within $\mathfrak{B}$, and that $C$ is a complement to $\mathfrak{V}$ within $\mathfrak{C}$.  Define subspaces $B_W$ of $B$ and $C_W$ of $C$ by 
\begin{align*}
B_W &= \{[b] \in B: \exists [c] \in C \text{ so that Equation (\ref{eq:W.diagram}) is solvable}\}\\
C_W &= \{[c] \in C: \exists [b] \in B \text{ so that Equation (\ref{eq:W.diagram}) is solvable}\}.
\end{align*}
Then there exists an $\mathbb{F}_2$-linear bijection $\phi_W:B_W \to C_W$ which takes each $[b] \in B_W$ to the unique $[c] \in C_W$ for which Equation (\ref{eq:W.diagram}) is solvable for $[b]$ and $[c]$.  
\end{lemma}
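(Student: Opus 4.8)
The plan is to construct $\phi_W$ directly and verify it is a well-defined bijection, with the heart of the matter being uniqueness of the partner $[c]$ for a given $[b]$. First I would establish uniqueness: suppose $[b] \in B$ and that Equation (\ref{eq:W.diagram}) is solvable both for the pair $[b], [c]$ (with witnesses $[\gamma_1],[\gamma_2],[\gamma_3]$) and for $[b], [c']$ (with witnesses $[\gamma_1'],[\gamma_2'],[\gamma_3']$). Dividing the two systems, the element $[c][c']$ — which lies in $\mathfrak{C}$ since $C \subseteq \mathfrak{C}$ and $\mathfrak{C}$ is a subspace — together with the quotients $[\gamma_i][\gamma_i']$ satisfies the diagram of Equation (\ref{eq:W.diagram}) with $[b]$ replaced by $[1]$. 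Reading that homogeneous diagram off, $[\gamma_1][\gamma_1']$ maps to $[1]$ under $1+\sigma_1$ and under $1+\sigma_2$ (from the left two arrows), hence $[\gamma_1][\gamma_1'] \in J(K)^G$, and likewise $[\gamma_3][\gamma_3'] \in J(K)^G$; chasing the middle node shows $[c][c']$ maps to $[1]$ under $1+\sigma_2$ and $[1]$ under $1+\sigma_1$ (using that the fixed elements die under $1+\sigma_i$), so $[c][c'] \in J(K)^G \cap \mathfrak{C} = \mathfrak{V}$ (the last equality because an element of $\mathfrak{C}$ that is $G$-fixed has $[\gamma]$ mapping to $[1]$ on both sides, so $[\gamma]$ witnesses membership in $\mathfrak{V}$). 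But $[c][c'] \in C$ and $C \cap \mathfrak{V} = \{1\}$, so $[c] = [c']$. This simultaneously shows the assignment is single-valued on $B_W$ and lands in $C_W$ by construction.

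Next I would check that $\phi_W$ is $\mathbb{F}_2$-linear: given $[b_1] \mapsto [c_1]$ and $[b_2] \mapsto [c_2]$ with respective witnesses, multiplying the two instances of Equation (\ref{eq:W.diagram}) componentwise produces a solution of Equation (\ref{eq:W.diagram}) for the pair $[b_1][b_2], [c_1][c_2]$ — this is exactly the same componentwise-product observation the paper already uses for $\mathfrak{V}$ just before the lemma statement. Since $[b_1][b_2] \in B$ and $[c_1][c_2] \in C$ (both being subspaces), and the partner is unique, $\phi_W([b_1][b_2]) = [c_1][c_2] = \phi_W([b_1])\phi_W([b_2])$. (Over $\mathbb{F}_2$ there is nothing further to check for linearity, and $\phi_W([1]) = [1]$ follows from uniqueness applied to the trivial solution.) In particular $B_W$ is a subspace of $B$, as implicitly asserted.

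Finally I would argue $\phi_W$ is a bijection $B_W \to C_W$. Surjectivity onto $C_W$ is immediate from the definition of $C_W$: if $[c] \in C_W$ then some $[b] \in B$ makes Equation (\ref{eq:W.diagram}) solvable, so $[b] \in B_W$ and $\phi_W([b]) = [c]$ by uniqueness of the partner of $[b]$. For injectivity, by $\mathbb{F}_2$-linearity it suffices to show the kernel is trivial: if $\phi_W([b]) = [1]$ then Equation (\ref{eq:W.diagram}) holds for $[b]$ and $[c] = [1]$, and now the same homogeneous-diagram chase as in the uniqueness step — with the roles of $[b]$ and $[c]$ swapped — shows $[b] \in J(K)^G \cap \mathfrak{B} = \mathfrak{V}$, hence $[b] \in B \cap \mathfrak{V} = \{1\}$. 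The main obstacle is the uniqueness argument: it requires carefully reading the homogeneous version of diagram (\ref{eq:W.diagram}) to see that the "difference" of two solutions forces $[c][c'] \in \mathfrak{V}$, and recognizing the clean characterization $J(K)^G \cap \mathfrak{C} = \mathfrak{V}$ (and symmetrically for $\mathfrak{B}$) that makes everything collapse; once that is in hand, linearity, surjectivity, and injectivity are all short diagram-multiplication arguments.
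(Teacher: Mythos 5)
Your overall strategy is the paper's: multiply (equivalently, divide) the two solutions attached to a fixed $[b]$, conclude that $[c][c']$ lies in $\mathfrak{V}$, and finish with $C\cap\mathfrak{V}=\{[1]\}$; your linearity and surjectivity arguments are also essentially the paper's. However, the justification you give for the crucial membership $[c][c']\in\mathfrak{V}$ does not work as written. From the homogeneous diagram you may indeed conclude $[\gamma_1][\gamma_1']\in J(K)^G$ (it is killed by both $1+\sigma_1$ and $1+\sigma_2$ since $[b]^2=[1]$), but not that $[\gamma_3][\gamma_3']\in J(K)^G$: that element maps to $[c][c']$ under $1+\sigma_2$, which is exactly the quantity whose triviality is at issue. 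More seriously, the identity $J(K)^G\cap\mathfrak{C}=\mathfrak{V}$ (and likewise $J(K)^G\cap\mathfrak{B}=\mathfrak{V}$, which you use for the kernel computation) is false: $\mathfrak{B}$ and $\mathfrak{C}$ are by definition subspaces of $[F^\times]\subseteq J(K)^G$, so these intersections are all of $\mathfrak{C}$ and $\mathfrak{B}$. For the same reason, observing that $[c][c']$ is killed by $1+\sigma_1$ and $1+\sigma_2$ carries no information, since every class in $[F^\times]$ satisfies $[f]^{1+\sigma_i}=[f^2]=[1]$; membership in $\mathfrak{V}$ is not a fixedness condition on $[c][c']$ but the existence of witnesses in $J(K)$ solving the $\mathfrak{V}$-diagram for it.

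The repair is immediate from the data you already wrote down, and it is what the paper does: in the multiplied system the middle witness $[\gamma_2][\gamma_2']$ is killed by $1+\sigma_2$ (its image there is $[b]^2=[1]$) and maps to $[c][c']$ under $1+\sigma_1$, so $[c][c']\in\mathfrak{B}$; the right-hand witness $[\gamma_3][\gamma_3']$ maps to $[c][c']$ under $1+\sigma_2$ and is killed by $1+\sigma_1$, so $[c][c']\in\mathfrak{C}$; hence $[c][c']\in\mathfrak{B}\cap\mathfrak{C}=\mathfrak{V}$, the identity recorded just before Lemma \ref{le:characterizing.W.elements}. The same fix handles your kernel step: solvability of Equation (\ref{eq:W.diagram}) for $([b],[1])$ makes the middle witness exhibit $[b]\in\mathfrak{C}$, so $[b]\in B\cap\mathfrak{B}\cap\mathfrak{C}=B\cap\mathfrak{V}=\{[1]\}$. (Alternatively, as in the paper, running the uniqueness argument symmetrically for a fixed $[c]$ gives bijectivity directly, with linearity then following because Equation (\ref{eq:W.diagram}) is linear.)
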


\begin{proof}
That $B_W$ and $C_W$ are subspaces follows since Equation (\ref{eq:W.diagram}) is linear.  Now we claim that for each $[b] \in B_W$ there exists a unique $[c] \in C_W$ for which the equations represented by Equation (\ref{eq:W.diagram}) are solvable.  Suppose instead we had some $[b] \in B_W$ so that there exist $[c],[\hat c] \in C_W$ which make Equation (\ref{eq:W.diagram}) solvable; we'll write $[\gamma_1],[\gamma_2],[\gamma_3]$ for the additional terms that solve the system with $[b]$ and $[c]$, and we'll write $[\gamma_1],[\widehat{\gamma_2}],[\widehat{\gamma_3}]$ for the additional terms that solve the system with $[b]$ and $[\hat c]$.  By multiplying the two systems, we're left with 
$$
\begin{tikzpicture}[scale=0.65]
\node (Ut1) at (-4,0) {$[\gamma_1]^2$};
\node (Ut2) at (0,0) {$[\gamma_2][\widehat{\gamma_2}]$};
\node (Ut3) at (4,0) {$[\gamma_3][\widehat{\gamma_3}]$};
\node (Ub1) at (-2,-2) {$[b]^2$};
\node (Ub2) at (2,-2) {$[c][\hat c]$};
\node (Ub0) at (-6,-2) {$[1]$};
\node (Ub3) at (6,-2) {$[1]$};

\draw[->] (Ut1) to (Ub0);
\draw[->] (Ut1) to (Ub1);
\draw[->] (Ut2) to (Ub1);
\draw[->] (Ut2) to (Ub2);
\draw[->] (Ut3) to (Ub2);
\draw[->] (Ut3) to (Ub3);

\end{tikzpicture}
$$
and so we see that $[c][\hat c] \in \mathfrak{V}$.  But since $[c]$ and $[\hat c]$ are contained in a complement $C$ of $\mathfrak{V}$ within $\mathfrak{C}$, this implies $[c][\hat c] =[1]$.  Hence $[c]=[\hat c]$.

The same argument, of course, shows that for a given $[c] \in C_W$ there exists a unique $[b] \in B_W$ for which Equation (\ref{eq:W.diagram}) is solvable.  We define $\phi_W$ as the function which associates to each $[b] \in B_W$ its corresponding $[c] \in C_W$.  The fact that the equations represented by Equation (\ref{eq:W.diagram}) are linear implies that $\phi_W$ is linear as well, and hence an isomorphism of $\mathbb{F}_2$-spaces.
\end{proof}


We are now prepared to state and prove the main result in this section.
\begin{theorem}\label{th:trivial.candidate} 
There exists a submodule $\hat J$ so that $\hat J^G = [F^\times]$, and for which $$\hat J \simeq Y_A \oplus Y_V \oplus Y_W \oplus Y_{B}\oplus Y_{C} \oplus Y_{D} \oplus Y_F$$ where 
\begin{itemize}
\item $Y_A$ is a direct sum of submodules isomorphic to $\mathbb{F}_2[G]$;
\item $Y_V$ is a direct sum of submodules isomorphic to $\Omega^{1}$;
\item $Y_W$ is a direct sum of submodules isomorphic to $\Omega^2$;
\item $Y_{B}$ is a direct sum of submodules isomorphic to $\mathbb{F}_2[\overline{G_1}]$;
\item $Y_{C}$ is a direct sum of submodules isomorphic to $\mathbb{F}_2[\overline{G_2}]$;
\item $Y_{D}$ is a direct sum of submodules isomorphic to $\mathbb{F}_2[\overline{G_3}]$;
\item $Y_F$ is a direct sum of submodules isomorphic to $\mathbb{F}_2$.
\end{itemize}
\end{theorem}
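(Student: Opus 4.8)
The plan is to build $\hat J$ one indecomposable summand at a time, attaching a submodule of the prescribed type to each vector of a basis of $[F^\times]$ adapted to a filtration by the subspaces $\mathfrak A\subseteq\mathfrak V\subseteq\mathfrak B,\mathfrak C$ and $\mathfrak D$. The organizing principle is that each of the seven summand types carries a prescribed fixed submodule of dimension $1$ or $2$: a free module $\mathbb{F}_2[G]$ has $\mathbb{F}_2[G]^G=\langle g^{(1+\sigma_1)(1+\sigma_2)}\rangle$ for a free generator $g$, so its fixed part lies in $\mathfrak A$; a copy of $\Omega^1$ realizes exactly the diagram defining $\mathfrak V$ and has fixed part $\langle\delta_1\rangle$; a copy of $\Omega^2$ realizes exactly Equation (\ref{eq:W.diagram}) and has the two-dimensional fixed part $\langle\delta_1,\delta_2\rangle$; a copy of $\mathbb{F}_2[\overline{G_i}]$ for $i=1,2,3$ has fixed part spanned by the relevant ``norm'' of its generator, so it lands in $\mathfrak B$, $\mathfrak C$, or $\mathfrak D$ respectively; and $\mathbb{F}_2$ contributes an arbitrary fixed vector. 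Hence if I can present $[F^\times]$ as an internal direct sum of subspaces, each realized as the fixed part of a submodule of one of these types, and if those submodules are mutually independent, then Lemma \ref{le:exclusion.lemma} (applied iteratively) forces their sum $\hat J$ to be direct with $\hat J^G$ equal to the sum of the fixed parts, i.e.\ $[F^\times]$. (The whole argument is purely module-theoretic, using only the subspaces of Figure \ref{fig:first.filtration.diagrams} and the lemmas already established.)

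Concretely, I would fix a chain of complements. First write $\mathfrak B=\mathfrak V\oplus B$ and $\mathfrak C=\mathfrak V\oplus C$; since $\mathfrak B\cap\mathfrak C=\mathfrak V$ this forces $B\cap C=\{1\}$ and $\mathfrak B+\mathfrak C=\mathfrak V\oplus B\oplus C$. Inside $B$ and $C$, use Lemma \ref{le:BW.and.CW} to single out the subspaces $B_W,C_W$ and the isomorphism $\phi_W\colon B_W\to C_W$, and pick complements $B'$ of $B_W$ in $B$ and $C'$ of $C_W$ in $C$. Next, using Lemma \ref{le:characterizing.W.elements} together with the easy fact that $\mathfrak V\subseteq\mathfrak D$ (if $[\gamma_1],[\gamma_2]$ witness $[f]\in\mathfrak V$ then $[\gamma_1][\gamma_2]$ witnesses $[f]\in\mathfrak D$), I would verify the identity $(\mathfrak B+\mathfrak C)\cap\mathfrak D=\mathfrak V\oplus\{[b]\,\phi_W([b]):[b]\in B_W\}$; then choose a complement $D'$ of $(\mathfrak B+\mathfrak C)\cap\mathfrak D$ in $\mathfrak D$, and finally a complement $F'$ of $\mathfrak B+\mathfrak C+\mathfrak D$ in $[F^\times]$. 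Now construct the summands: for a basis vector of $\mathfrak A$, written $[k]^{(1+\sigma_1)(1+\sigma_2)}$ for suitable $[k]$, the submodule $\langle[k]\rangle$ is free over $\mathbb{F}_2[G]$ (it is the only cyclic $\mathbb{F}_2[G]$-module on which $(1+\sigma_1)(1+\sigma_2)$ acts nontrivially), giving $Y_A$; for a basis vector $[v]$ of a complement of $\mathfrak A$ in $\mathfrak V$, solving the $\mathfrak V$-diagram gives $[\gamma_1],[\gamma_2]$ with $\langle[\gamma_1],[\gamma_2],[v]\rangle\simeq\Omega^1$, giving $Y_V$; for a basis vector $[b]$ of $B_W$, Equation (\ref{eq:W.diagram}) with $[c]=\phi_W([b])$ gives $[\gamma_1],[\gamma_2],[\gamma_3]$ with $\langle[\gamma_1],[\gamma_2],[\gamma_3],[b],[c]\rangle\simeq\Omega^2$, giving $Y_W$; for basis vectors of $B'$, $C'$, $D'$ the corresponding diagrams give generators of modules isomorphic to $\mathbb{F}_2[\overline{G_1}]$, $\mathbb{F}_2[\overline{G_2}]$, $\mathbb{F}_2[\overline{G_3}]$, giving $Y_B,Y_C,Y_D$; and $Y_F$ is the sum of the trivial modules $\langle[f]\rangle$ over a basis of $F'$. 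In each case the claimed isomorphism type follows from a short dimension count (for instance, an $\Omega^2$-candidate is genuinely five-dimensional because any collapse would force one of $[b],[c]$ to be trivial or would push $[b][c]$ into $\mathfrak V$), and the fixed submodule of the summand is exactly the span of the basis vector(s) used to build it.

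It then remains to check that these fixed submodules are independent and span $[F^\times]$. By construction $Y_A^G\oplus Y_V^G=\mathfrak V$, $Y_W^G=B_W\oplus C_W$, $Y_B^G=B'$, $Y_C^G=C'$, so together these four realize $\mathfrak V\oplus B_W\oplus C_W\oplus B'\oplus C'=\mathfrak V\oplus B\oplus C=\mathfrak B+\mathfrak C$; adjoining $Y_D^G=D'$ realizes $\mathfrak B+\mathfrak C+\mathfrak D$ (the identity above shows $(\mathfrak B+\mathfrak C)\cap\mathfrak D\subseteq Y_A^G+Y_V^G+Y_W^G$, so $D'$ meets $\mathfrak B+\mathfrak C$ trivially); and adjoining $Y_F^G=F'$ fills out $[F^\times]$, a dimension count confirming that the sum is direct. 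Applying Lemma \ref{le:exclusion.lemma} inductively --- at each step the partial sum already built is direct, hence its fixed part is the direct sum of the constituent fixed parts --- shows that $Y_A,Y_V,Y_W,Y_B,Y_C,Y_D,Y_F$ are mutually independent. Taking $\hat J$ to be their sum gives the stated decomposition together with $\hat J^G=\bigoplus(\text{fixed parts})=[F^\times]$.

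The principal obstacle is the bookkeeping forced by $\mathfrak D$: an element of $\mathfrak B+\mathfrak C$ that also happens to lie in $\mathfrak D$ must be absorbed into an $\Omega^2$ summand rather than being counted once among the $\mathbb{F}_2[\overline{G_1}]$/$\mathbb{F}_2[\overline{G_2}]$ summands and again among the $\mathbb{F}_2[\overline{G_3}]$ summands. Pinning down exactly which elements these are --- and hence the exact location of the $\Omega^2$ fixed parts inside $B_W\oplus C_W$ --- is precisely the role of Lemmas \ref{le:characterizing.W.elements} and \ref{le:BW.and.CW}, and correctly threading $B_W$, $C_W$, $D'$, and $F'$ through the chain of complements is where essentially all the content lies; the verification of the individual isomorphism types and of the closing dimension count is routine linear algebra.
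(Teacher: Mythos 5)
Your proposal is correct and follows essentially the same route as the paper's proof: the same filtration $\mathfrak{A}\subseteq\mathfrak{V}\subseteq\mathfrak{B},\mathfrak{C}$ and $\mathfrak{D}$, the same chain of complements with $B_W$, $C_W$, $\phi_W$ from Lemma \ref{le:BW.and.CW}, the same summand-by-summand construction of $Y_A,\dots,Y_F$, and the same iterative use of Lemma \ref{le:exclusion.lemma} on fixed parts to get directness and $\hat J^G=[F^\times]$. Your explicit identity $(\mathfrak{B}+\mathfrak{C})\cap\mathfrak{D}=\mathfrak{V}\oplus\{[b]\,\phi_W([b]):[b]\in B_W\}$ is true but not needed for the theorem itself (the paper only invokes that computation later, in Corollary \ref{cor:solvable.in.hat.J}); the paper's directness step uses only that $\mathcal{D}_0$ complements $(\mathfrak{B}+\mathfrak{C})\cap\mathfrak{D}$ in $\mathfrak{D}$.
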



\begin{proof}
Choose $\mathcal{A}$ to be an $\mathbb{F}_2$-basis for $\mathfrak{A}$. By the definition of $\mathfrak{A}$, for each $[f] \in \mathcal{A}$, there exists some $[\gamma_f] \in [K^\times]$ so that $[N_{K/F}(\gamma_f)] = [f]$.  We define $M_{[f]} := \langle [\gamma_f]\rangle$, and observe that $M_{[f]} \simeq \mathbb{F}_2[G]$ and $M_{[f]}^G = \langle [f]\rangle$.  Let $Y_A = \sum_{[f]\in\mathcal{A}}M_{[f]}.$  Observe that $Y_A = \bigoplus_{[f]\in\mathcal{A}}M_{[f]}$ by Lemma \ref{le:exclusion.lemma}, and that $Y_A^G = \bigoplus_{[f] \in \mathcal{A}} \langle [f] \rangle = \langle \mathcal{A} \rangle = \mathfrak{A}$ by construction.

Let $\mathcal{V}$ be an $\mathbb{F}_2$-basis for a complement of $\mathfrak{A}$ in $\mathfrak{V}$. By definition of $\mathfrak{V}$, for each $[f] \in \mathcal{V}$ we can choose $[\gamma_{1,f}],[\gamma_{2,f}] \in [K^\times]$ so that for $\{i,j\} = \{1,2\}$ we have $[\gamma_{i,f}]^{1+\sigma_i} = [f]$ and $[\gamma_{i,f}]^{1+\sigma_j} = [1]$.  For each $[f] \in \mathcal{V}$ we define $M_{[f]} := \langle [\gamma_{1,f}],[\gamma_{2,f}]\rangle$.  We claim that $M_{[f]} \simeq \Omega^1$ and that $M_{[f]}^G = \langle [f]\rangle$.  By construction, the appropriate $\Omega^1$-relations are satisfied for $M_{[f]}$, so we need only check that there are not additional relations.  For this, observe that any nontrivial relation amongst $\{[f],[\gamma_{1,f}],[\gamma_{2,f}]\}$ must involve at least one of $[\gamma_{1,f}]$ or $[\gamma_{2,f}]$ since we know that $[f]$ is nontrivial. On the one hand, if we had a nontrivial relation involving $[\gamma_{1,f}]$, then an application of $1+\sigma_1$ to this relation would tell us that $[f]=[1]$; on the other hand, a nontrivial relation involving $[\gamma_{2,f}]$ would tell us that $[f]=[1]$ after an application of $1+\sigma_2$.  Hence our set is independent, and so $M_{[f]} \simeq \Omega^1$.  This gives $M_{[f]}^G = \langle[f]\rangle$ as well.  Let $Y_V = \sum_{[f]\in\mathcal{V}}M_{[f]}.$ Indeed, we have $Y_V = \bigoplus_{[f]\in\mathcal{V}}M_{[f]}$ by Lemma \ref{le:exclusion.lemma}.  We also have $Y_V^G = \bigoplus_{[f] \in \mathcal{V}} M_{[f]}^G =\bigoplus_{[f] \in \mathcal{V}} \langle [f] \rangle = \langle \mathcal{V} \rangle$ by construction.

Now let $B$ be a complement to $\mathfrak{V}$ within $\mathfrak{B}$, and let $C$ a complement to $\mathfrak{V}$ within $\mathfrak{C}$. Let $B_W$ and $C_W$ be the subspaces defined in Lemma \ref{le:BW.and.CW}.  Let $\mathcal{B}_W$ be an $\mathbb{F}_2$-basis for $B_W$.  For each $[b] \in \mathcal{B}_W$, we know that there exist $[\gamma_1],[\gamma_2],[\gamma_3] \in J(K)$ and $\phi_W([b]) =[c]\in C_W$ which solve Equation (\ref{eq:W.diagram}).  Let $M_{[b]} = \langle [b],[c],[\gamma_1],[\gamma_2],[\gamma_3]\rangle$.  We claim that $M_{[b]} \simeq \Omega^2$.  Certainly the appropriate $\Omega^2$ relations hold by construction, so we simply need to ensure that there are no additional relations.  The elements $[b]$ and $[c]$ are independent since $[b]$ and $[c]$ are each drawn from a complement to $\mathfrak{V} = \mathfrak{B} \cap \mathfrak{C}$ in their respective spaces.  Now if we had a nontrivial $\mathbb{F}_2$-dependence that involved any of $[\gamma_1]$ or $[\gamma_2]$, then an application of $1+\sigma_1$ would force a nontrivial $\mathbb{F}_2$-dependence on $[b]$ and $[c]$, which we've just seen is not possible.  Likewise, a nontrivial $\mathbb{F}_2$-dependence that involves $[\gamma_3]$ would force a nontrivial $\mathbb{F}_2$-dependence between $[b]$ and $[c]$.  Hence the set is independent, and so $M_{[b]} \simeq \Omega^{2}$.  Note this also forces $M_{[b]}^G = \langle[b],[c]\rangle = \langle [b],\phi_W([b])\rangle$.  Let $Y_W = \sum_{[b] \in \mathcal{B}_W} M_{[b]}.$  As before, we in fact have $Y_W = \bigoplus_{[b] \in \mathcal{B}_W} M_{[b]}$, and furthermore $$Y_W^G = \bigoplus_{[b] \in \mathcal{B}_W} M_{[b]}^G = \bigoplus_{[b]\in \mathcal{B}_W} \langle [b], \phi_W([b]) \rangle = B_W \oplus \phi_W(B_W) = B_W \oplus C_W$$ by Lemma \ref{le:BW.and.CW}.

Let $\mathcal{B}_0$ be a basis for a complement to $B_W$ within $B$.  Since $B \subseteq \mathfrak{B}$, each $[f] \in \mathcal{B}_0$ has some $[\gamma_f] \in [K^\times]$ so that $[\gamma_f]^{1+\sigma_1} = [f]$ and $[\gamma_f]^{1+\sigma_2} = [1]$.  Since $[f] \neq [1]$ we get $M_{[f]}:=\langle [\gamma_f]\rangle$ is isomorphic to $\mathbb{F}_2[\overline{G_1}]$, and $M_{[f]}^G = \langle [f]\rangle$.  Let $Y_{B} = \sum_{[f] \in \mathcal{B}_0}M_{[f]}.$  Lemma \ref{le:exclusion.lemma} again gives $Y_B = \bigoplus_{[f] \in \mathcal{B}_0} M_{[f]}$, and furthermore we have $Y_B^G = \bigoplus_{[f] \in \mathcal{B}_0} M_{[f]}^G = \bigoplus_{[f] \in\mathcal{B}_0} \langle [f] \rangle = \langle \mathcal{B}_0\rangle$.

Let $\mathcal{C}_0$ be a basis for a complement to $C_W$ within $C$.  Since $C \subseteq \mathfrak{C}$, for each $[f] \in \mathcal{C}_0$ there exists some $[\gamma_f] \in [K^\times]$ so that $[\gamma_f]^{1+\sigma_2} = [f]$ and $[\gamma_f]^{1+\sigma_1} = [1]$.  Since $[f] \neq [1]$ we get $M_{[f]}:=\langle [\gamma_f]\rangle$ is isomorphic to $\mathbb{F}_2[\overline{G_2}]$, and $M_{[f]}^G = \langle [f]\rangle$.  Let $Y_{C} = \sum_{[f] \in \mathcal{C}_0}M_{[f]}.$ Once again we have $Y_C = \bigoplus_{[f] \in \mathcal{C}_0} M_{[f]}$ by Lemma \ref{le:exclusion.lemma}, and $Y_C^G = \bigoplus_{[f] \in \mathcal{C}_0} M_{[f]}^G = \bigoplus_{[f] \in\mathcal{C}_0} \langle [f] \rangle = \langle \mathcal{C}_0\rangle$.

Let $\mathcal{D}_0$ be a basis for a complement to $(\mathfrak{B}+\mathfrak{C}) \cap \mathfrak{D}$ within $\mathfrak{D}$.  By the definition of $\mathfrak{D}$, for each $[f] \in \mathcal{D}_0$ there exists some $[\gamma_f] \in [K^\times]$ so that $[\gamma_f]^{1+\sigma_2} = [\gamma_f]^{1+\sigma_1} =  [f]$.  Since $[f] \neq [1]$ we get $M_{[f]}:=\langle [\gamma_f]\rangle$ is isomorphic to $\mathbb{F}_2[\overline{G_3}]$, and $M_{[f]}^G = \langle [f]\rangle$.  Let $Y_{D} = \sum_{[f] \in \mathcal{D}_0}M_{[f]}.$ Again we get $Y_D = \bigoplus_{[f] \in \mathcal{D}_0} M_{[f]}$, and $Y_D^G = \bigoplus_{[f] \in \mathcal{D}_0} M_{[f]}^G = \bigoplus_{[f] \in \mathcal{D}_0} \langle [f] \rangle = \langle \mathcal{D}_0\rangle$.

Finally, define $\mathcal{F}_0$ to be a basis for a complement to $\mathfrak{B}+\mathfrak{C}+\mathfrak{D}$ within $[F^\times]$.  For each $[f] \in \mathcal{F}_0$ we define $M_{[f]} = \langle [f]\rangle$, which is clearly isomorphic to $\mathbb{F}_2$.  We let $Y_F = \bigoplus_{[f] \in \mathcal{F}_0} M_{[f]}.$

We have already detailed the fixed parts of each submodule, and we will use this to show that the sum is direct.  First, recall that $Y_A^G = \mathfrak{A}$ and $Y_V^G = \langle \mathcal{V}\rangle$, where $\mathcal{V}$ is chosen to be a complement to $\mathfrak{A}$ in $\mathfrak{V}$.  Then Lemma \ref{le:exclusion.lemma} gives $Y_A+Y_V  = Y_A\oplus Y_V$, and additionally we have $(Y_A\oplus Y_V)^G = \mathfrak{V}$.  

Next, since $Y_W^G = B_W\oplus C_W$ --- where $B_W$ and $C_W$ are complements to $\mathfrak{V}$ in their respective spaces --- Lemma \ref{le:exclusion.lemma} gives $(Y_A \oplus Y_V)+Y_W = Y_A \oplus Y_V \oplus Y_W$, and indeed $(Y_A\oplus Y_V \oplus Y_W)^G = \mathfrak{V} \oplus B_W \oplus C_W$.  

Next, we know that $\mathfrak{B} = \mathfrak{V} \oplus B_W \oplus \langle \mathcal{B}_0\rangle$, and since $Y_B^G = \langle \mathcal{B}_0\rangle$ this means that $Y_A\oplus Y_V \oplus Y_W + Y_B = Y_A\oplus Y_V \oplus Y_W \oplus Y_B$, and $(Y_A\oplus Y_V \oplus Y_W \oplus Y_B)^G = \mathfrak{B} \oplus C_W$.  Using the facts that $\mathfrak{B} \cap \mathfrak{C} = \mathfrak{V}$, that $Y_C^G = \langle \mathcal{C}_0\rangle$, and that $\mathfrak{C} = \mathfrak{V} \oplus C_W \oplus \langle \mathcal{C}_0 \rangle$, Lemma \ref{le:exclusion.lemma} gives $Y_A\oplus Y_V \oplus Y_W \oplus Y_B + Y_C= Y_A \oplus Y_V \oplus Y_W \oplus Y_B \oplus Y_C$, and $(Y_A \oplus Y_V \oplus Y_W \oplus Y_B \oplus Y_C)^G = \mathfrak{B}+\mathfrak{C}$.  

For the next term, since $Y_D^G = \langle \mathcal{D}_0 \rangle$, where $\mathcal{D}_0$ is a complement to $(\mathfrak{B}+\mathfrak{C})\cap \mathfrak{D}$, Lemma \ref{le:exclusion.lemma} gives us $Y_A\oplus Y_V \oplus Y_W \oplus Y_B \oplus Y_C + Y_D = Y_A \oplus Y_V \oplus Y_W \oplus Y_B \oplus Y_C \oplus Y_D$, and indeed $(Y_A \oplus Y_V \oplus Y_W \oplus Y_B \oplus Y_C \oplus Y_D)^G = \mathfrak{B}+\mathfrak{C}+\mathfrak{D}$.  

Finally, since $Y_F^G = \langle \mathcal{F}_0\rangle$, where $\mathcal{F}_0$ is a complement to $\mathfrak{B}+\mathfrak{C}+\mathfrak{D}$ in $[F^\times]$, one final application of Lemma \ref{le:exclusion.lemma} gives $\hat J^G = [F^\times]$ and $$\hat J = Y_A \oplus Y_V \oplus Y_W \oplus Y_B \oplus Y_C \oplus Y_D \oplus Y_F.$$
\end{proof}

\begin{corollary}\label{cor:solvable.in.hat.J}
Suppose that $[f] \in \mathfrak{M}$ for $\mathfrak{M} \in \{\mathfrak{A},\mathfrak{B},\mathfrak{C},\mathfrak{D}\}$. Then the diagram corresponding to $\mathfrak{M}$ has a solution for $[f]$ in which each term of the solution comes from $\hat J$.
\end{corollary}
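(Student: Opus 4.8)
The plan is to exploit two features of the construction of $\hat J$ in Theorem~\ref{th:trivial.candidate}. First, each of the four diagrams defining $\mathfrak{A},\mathfrak{B},\mathfrak{C},\mathfrak{D}$ encodes a homogeneous linear system over $\mathbb{F}_2[G]$ whose only ``output'' is $[f]$ and whose only genuinely free node is the top one: $[\gamma]$ in the diagrams for $\mathfrak{B},\mathfrak{C},\mathfrak{D}$, and $[k]$ (with its two images forced below it) in the diagram for $\mathfrak{A}$. Hence it is enough to produce a suitable choice of this top node lying in $\hat J$; the remaining nodes then lie in $\hat J$ automatically because $\hat J$ is a submodule. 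Since the systems are linear and $\hat J$ is closed under multiplication, solutions also combine: if each of $[f_1]$ and $[f_2]$ admits an in-$\hat J$ solution of the $\mathfrak{M}$-diagram, so does $[f_1][f_2]$. Second, the proof of Theorem~\ref{th:trivial.candidate} presents $[F^\times]=\hat J^G$ as the internal direct sum
$$[F^\times]=\mathfrak{A}\oplus\langle\mathcal{V}\rangle\oplus B_W\oplus C_W\oplus\langle\mathcal{B}_0\rangle\oplus\langle\mathcal{C}_0\rangle\oplus\langle\mathcal{D}_0\rangle\oplus\langle\mathcal{F}_0\rangle,$$
with $\mathfrak{V}=\mathfrak{A}\oplus\langle\mathcal{V}\rangle$, $\mathfrak{B}=\mathfrak{V}\oplus B_W\oplus\langle\mathcal{B}_0\rangle$, and $\mathfrak{C}=\mathfrak{V}\oplus C_W\oplus\langle\mathcal{C}_0\rangle$, each summand being the fixed part of one of the explicit families of submodules comprising $\hat J$. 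So it suffices to fix a basis of each summand that can occur in an element of $\mathfrak{M}$, treat basis vectors one at a time, and combine.

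For $\mathfrak{M}=\mathfrak{A}$ the only summand involved is $\mathfrak{A}$ itself: if $[f]=\prod[f_i]^{e_i}$ with the $[f_i]$ running over the chosen basis $\mathcal{A}$, take $[k]=\prod[\gamma_{f_i}]^{e_i}$, where $[\gamma_{f_i}]$ is the norm preimage generating $M_{[f_i]}\subseteq Y_A$; then $[k]^{(1+\sigma_1)(1+\sigma_2)}=[f]$. For $\mathfrak{M}=\mathfrak{B}$ the relevant summands are $\mathfrak{A}$, $\langle\mathcal{V}\rangle$, $B_W$, $\langle\mathcal{B}_0\rangle$, and I would provide an in-$\hat J$ top node for a basis vector of each: for $[f_i]\in\mathcal{A}$ use $[\gamma_{f_i}]^{1+\sigma_2}\in M_{[f_i]}$, whose $(1+\sigma_2)$-image is $[1]$ (since $(1+\sigma_2)^2=0$ in $\mathbb{F}_2[G]$) and whose $(1+\sigma_1)$-image is $[\gamma_{f_i}]^{(1+\sigma_1)(1+\sigma_2)}=[f_i]$; for $[f]\in\mathcal{V}$ use the generator $[\gamma_{1,f}]\in M_{[f]}\subseteq Y_V$, which satisfies the $\mathfrak{B}$-relations by construction; for $[b]\in\mathcal{B}_W$ use the interior node $[\gamma_1]$ of the $\Omega^2$-submodule $M_{[b]}\subseteq Y_W$, whose $(1+\sigma_2)$- and $(1+\sigma_1)$-images are $[1]$ and $[b]$; and for $[f]\in\mathcal{B}_0$ use the generator of $M_{[f]}\subseteq Y_B$. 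The case $\mathfrak{M}=\mathfrak{C}$ is entirely symmetric: use $[\gamma_{f_i}]^{1+\sigma_1}$ for $[f_i]\in\mathcal{A}$, the generator $[\gamma_{2,f}]$ for $[f]\in\mathcal{V}$, the interior node $[\gamma_3]$ of $M_{[b]}$ (whose $(1+\sigma_2)$- and $(1+\sigma_1)$-images are $\phi_W([b])$ and $[1]$) for the basis $\{\phi_W([b]):[b]\in\mathcal{B}_W\}$ of $C_W$, and the generator of $M_{[f]}\subseteq Y_C$ for $[f]\in\mathcal{C}_0$.

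The case $\mathfrak{M}=\mathfrak{D}$ is where the real difficulty lies, and I expect it to be the main obstacle. Write $\mathfrak{D}=\bigl((\mathfrak{B}+\mathfrak{C})\cap\mathfrak{D}\bigr)\oplus\langle\mathcal{D}_0\rangle$; the $\langle\mathcal{D}_0\rangle$-part is immediate, since each generator of $M_{[f]}\subseteq Y_D$ satisfies the $\mathfrak{D}$-relations by construction. The trouble is that for $[x]\in(\mathfrak{B}+\mathfrak{C})\cap\mathfrak{D}$, Lemma~\ref{le:characterizing.W.elements} only yields a solution of Equation~(\ref{eq:W.diagram}) whose nodes live somewhere in $J(K)$, not necessarily in $\hat J$. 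To get around this I would first identify $(\mathfrak{B}+\mathfrak{C})\cap\mathfrak{D}$ precisely. Note $\mathfrak{V}\subseteq\mathfrak{D}$ (multiply the two $\Omega^1$-generators of an element of $\mathfrak{V}$). Now write $[x]=[v][b][c]$ with $[v]\in\mathfrak{V}$, $[b]$ in the chosen complement $B$ of $\mathfrak{V}$ in $\mathfrak{B}$, and $[c]$ in the chosen complement $C$ of $\mathfrak{V}$ in $\mathfrak{C}$; then $[b][c]=[x][v]\in\mathfrak{D}$ as well, so $[b][c]\in(\mathfrak{B}+\mathfrak{C})\cap\mathfrak{D}$, and Lemma~\ref{le:characterizing.W.elements} shows Equation~(\ref{eq:W.diagram}) is solvable for $[b],[c]$. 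This forces $[b]\in B_W$, $[c]\in C_W$, and the uniqueness clause of Lemma~\ref{le:BW.and.CW} forces $[c]=\phi_W([b])$. Hence every element of $(\mathfrak{B}+\mathfrak{C})\cap\mathfrak{D}$ is a product of an element of $\mathfrak{V}$ with elements of the form $[b]\phi_W([b])$, $[b]\in\mathcal{B}_W$. For the $\mathfrak{V}$-contribution I would solve the $\mathfrak{D}$-diagram inside $\hat J$ using $[\gamma_{f}]^{1+\sigma_1}[\gamma_{f}]^{1+\sigma_2}\in M_{[f]}$ for $[f]\in\mathcal{A}$ (whose $(1+\sigma_1)$- and $(1+\sigma_2)$-images both equal $[\gamma_f]^{(1+\sigma_1)(1+\sigma_2)}=[f]$, again because $(1+\sigma_i)^2=0$) and $[\gamma_{1,f}][\gamma_{2,f}]\in M_{[f]}$ for $[f]\in\mathcal{V}$; and for each factor $[b]\phi_W([b])$ I would use $[\gamma_1][\gamma_2][\gamma_3]\in M_{[b]}\subseteq Y_W$, the product of the three interior nodes, whose $(1+\sigma_1)$- and $(1+\sigma_2)$-images are both $[b]\phi_W([b])$ by Equation~(\ref{eq:W.diagram}). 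Multiplying all of these with the $\langle\mathcal{D}_0\rangle$-contribution gives an in-$\hat J$ solution of the $\mathfrak{D}$-diagram for $[f]$, finishing the proof.
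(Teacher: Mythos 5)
Your proposal is correct and follows essentially the same route as the paper: decompose $[f]$ along the filtration $\mathfrak{A}\subseteq\mathfrak{V}\subseteq\mathfrak{B},\mathfrak{C}$ (and $(\mathfrak{B}+\mathfrak{C})\cap\mathfrak{D}\subseteq\mathfrak{D}$) using the chosen bases, and solve each diagram for basis elements with the explicit generators of $Y_A,Y_V,Y_W,Y_B,Y_C,Y_D$ constructed in Theorem \ref{th:trivial.candidate}, invoking Lemmas \ref{le:characterizing.W.elements} and \ref{le:BW.and.CW} for the $(\mathfrak{B}+\mathfrak{C})\cap\mathfrak{D}$ part. The only difference is cosmetic: you split an element of $(\mathfrak{B}+\mathfrak{C})\cap\mathfrak{D}$ into its $\mathfrak{V}$, $B$, and $C$ components before applying those lemmas to identify the latter two as $[b]$ and $\phi_W([b])$ with $[b]\in B_W$, whereas the paper reaches the same identification by adjusting an arbitrary solution afterwards.
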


\begin{proof}
Suppose first that $[f] \in \mathfrak{A}$.  Since $\mathcal{A}$ is a basis for $\mathfrak{A}$, we have $[f] = \prod_{i=1}^n [f_i]$ for appropriately chosen $[f_i] \in \mathcal{A}$. By construction, there exist $[\gamma_i] \in J(K)$ so that $[N_{K/F}(k_i)]=[f_i]$, and so we get $k = \prod_{i=1}^n [k_i]$ has $[N_{K/F}(k)] = [f]$. Hence the diagram corresponding to $\mathfrak{A}$ is solvable for $[f]$ with terms drawn from $\hat J$.


Now suppose that $[f] \in \mathfrak{B}$.  Since we know $\mathfrak{B} = \mathfrak{V} \oplus B_W \oplus \langle \mathcal{B}_0\rangle$, we can write $[f] = \prod_{i=1}^n [f_i] \prod_{j=1}^m [\widehat{f_j}] \prod_{k=1}^\ell [\widetilde{f_k}]$, where $f_i \in \mathcal{A} \cup \mathcal{V}$, $\widehat{f_j} \in \mathcal{B}_W$ and $\widetilde{f_k} \in \mathcal{B}_0$ are appropriately chosen.  Based on the construction of the terms from $Y_A$,$Y_V$, $Y_W$, and $Y_B$, we have elements $[\gamma_i],[\widehat{\gamma_j}],[\widetilde{\gamma_k}] \in \hat J$ that solve the diagram corresponding to $\mathfrak{B}$.  Hence if we let $[\gamma] = \prod_{i=1}^n [\gamma_i] \prod_{j=1}^m [\widehat{\gamma_j}] \prod_{k=1}^\ell [\widetilde{\gamma_k}]$, then the diagram corresponding to $\mathfrak{B}$ is solved with $[\gamma]$ and $[f]$.  An analogous argument settles the case where $[f] \in \mathfrak{C}$.

We have left to settle the statement for $\mathfrak{M}=\mathfrak{D}$.  This will take a bit more work.  Since $\mathcal{D}_0$ is a basis for a complement to $(\mathfrak{B}+\mathfrak{C})\cap \mathfrak{D}$ within $\mathfrak{D}$, we can write $[f] = [\hat f] \prod_{i=1}^n[f_i]$, where $[\hat f] \in (\mathfrak{B}+\mathfrak{C})\cap \mathfrak{D}$, and with $[f_i] \in \mathcal{D}_0$ appropriately chosen.  By construction of $Y_D$, for each $i$ we have an element $[\gamma_i]$ so that the diagram for $\mathfrak{D}$ is solved with $[\gamma_i]$ and $[f_i]$.  Furthermore, since $[\hat f] \in (\mathfrak{B}+\mathfrak{C}) \cap \mathfrak{D}$, Lemma \ref{le:characterizing.W.elements} tells us that $[\hat f] = [b][c]$ has the property that Equation (\ref{eq:W.diagram}) is solvable for $[b]$ and $[c]$:
$$
\begin{tikzpicture}[scale=0.65]
\node (Ut1) at (-4,0) {$[\widehat{\gamma_1}]$};
\node (Ut2) at (0,0) {$[\widehat{\gamma_2}]$};
\node (Ut3) at (4,0) {$[\widehat{\gamma_3}]$};
\node (Ub1) at (-2,-2) {$[b]$};
\node (Ub2) at (2,-2) {$[c]$};
\node (Ub0) at (-6,-2) {$[1]$};
\node (Ub3) at (6,-2) {$[1].$};

\draw[->] (Ut1) to (Ub0);
\draw[->] (Ut1) to (Ub1);
\draw[->] (Ut2) to (Ub1);
\draw[->] (Ut2) to (Ub2);
\draw[->] (Ut3) to (Ub2);
\draw[->] (Ut3) to (Ub3);
\end{tikzpicture}
$$
Since $B$ is a complement to $\mathfrak{V}$ in $\mathfrak{B}$ and $C$ is a complement to $\mathfrak{V}$ in $\mathfrak{C}$, we can write  $[b] = [v_1][b_W]$ and $[c] = [v_2][c_W]$ for $[v_1],[v_2] \in \mathfrak{V}$ and $[b_W] \in B$ and $[c_W] \in C$.  For $i \in \{1,2\}$, the construction of $Y_A$ and $Y_V$ give $[\widetilde{\gamma_{L,i}}],[\widetilde{\gamma_{R,i}}] \in \hat J$ that accompany $[v_i]$ in solving the diagram for $\mathfrak{V}$.  Note in particular this means that the diagram corresponding to $\mathfrak{D}$ is solved for $[\widetilde{\gamma_{L,i}}][\widetilde{\gamma_{R,i}}]$ and $[v_i]$.  We have left to deal with the $[b_W]$ and $[c_W]$ terms.

From our previous equations we get
$$
\begin{tikzpicture}[scale=0.65]
\node (Ut1) at (-4,0) {$[\widehat{\gamma_1}][\widetilde{\gamma_{L,1}}]$};
\node (Ut2) at (0,0) {$[\widetilde{\gamma_{R,1}}][\widehat{\gamma_2}][\widetilde{\gamma_{L,2}}]$};
\node (Ut3) at (4,0) {$[\widetilde{\gamma_{R,2}}][\widehat{\gamma_3}]$};
\node (Ub1) at (-2,-2) {$[b_W]$};
\node (Ub2) at (2,-2) {$[c_W]$};
\node (Ub0) at (-6,-2) {$[1]$};
\node (Ub3) at (6,-2) {$[1].$};

\draw[->] (Ut1) to (Ub0);
\draw[->] (Ut1) to (Ub1);
\draw[->] (Ut2) to (Ub1);
\draw[->] (Ut2) to (Ub2);
\draw[->] (Ut3) to (Ub2);
\draw[->] (Ut3) to (Ub3);
\end{tikzpicture}
$$ This means that $[b_W] \in B_W$, and by Lemma \ref{le:BW.and.CW} we have $\phi_W([b_W]) = c_W \in C_W$.  Hence $[b_W] = \prod_{j=1}^m [b_j]$ for $[b_j]$ appropriately chosen from $\mathcal{B}_W$.  By the construction of $Y_W$ we have elements $[\gamma_{1,j}],[\gamma_{2,j}],[\gamma_{3,j}] \in \hat J$ which solve Equation \ref{eq:W.diagram} for $[b_j]$ and $\phi_W([b_j])$.  Hence $\prod_{j=1}^m [\gamma_{1,j}]$, $\prod_{j=1}^m[\gamma_{2,j}]$, and $\prod_{j=1}^m [\gamma_{3,j}]$ solve Equation \ref{eq:W.diagram} for $[b_W]$ and $\prod_{j=1}^m \phi_W([b_j]) = \phi_W(\prod_{j=1}^m [b_j]) = [c_W]$. In particular, the equation corresponding to $\mathfrak{D}$ is solved by $\prod_{j=1}^m [\gamma_{1,j}][\gamma_{2,j}][\gamma_{3,j}]$ and $[b_W][c_W]$.  

In all, our original element $[f] \in \mathfrak{D}$ has now been expressed as $[f] = [\hat f]\prod_{i=1}^n[f_i] = [b][c] \prod_{i=1}^n [f_i] = [v_1][b_W][v_2][c_W]\prod_{i=1}^n[f_i]$, where each of $[v_1],[v_2],[b_W][c_W],$ and $\prod_{i=1}^n[f_i]$ have some corresponding element $[\gamma] \in \hat J$ which solves the diagram corresponding to $\mathfrak{D}$.
\end{proof}


\section{A module whose fixed part complements $[F^\times]$ in $J(K)^G$}\label{sec:X.candidate}

Lemma \ref{le:exclusion.lemma} tells us that independent summands of $J(K)$ have independent fixed parts.  Since we've already constructed a module whose fixed part is $[F^\times]$, we now are interested in finding a complementary module whose fixed part spans a complement to $[F^\times]$ in $J(K)^G$ --- at least to the degree that such a goal is achievable at all.  Ultimately this search will culminate in Theorem \ref{th:construction.of.X} at the end of this section, but to work towards this result we must first determine precisely which elements from $J(K)^G$ come from $[F^\times]$.  

Kummer theory tells us that we can determine whether an element $[\gamma] \in J(K)^G$ comes from $[F^\times]$ by examining the Galois group of the extension it generates over $F$: $$[\gamma] \in J(K)^G \cap [F^\times]\setminus
\{[1]\} \Leftrightarrow \Gal(K(\sqrt{\gamma})/F) \simeq \mathbb{Z}/2\mathbb{Z}\oplus\mathbb{Z}/2\mathbb{Z}\oplus\mathbb{Z}/2\mathbb{Z}.$$
The following result gives a slightly more nuanced view of this phenomenon.  Note that in this result --- and hence for much of the duration of this section --- we use the notation $[\gamma]_i$ to indicate the class of an element $\gamma \in K_i^\times \cap K^{\times 2}$ considered in the set $(K_i^\times \cap K^{\times 2})/K_i^{\times 2}$ for $i \in \{1,2,3\}$.

\begin{lemma}\label{le:Galois.groups.for.fixed.elements}
Suppose that $[\gamma] \in J(K)^G \setminus\{[1]\}$.  Then $K(\sqrt{\gamma})/F$ is Galois, and if $\hat \sigma_1$ and $\hat\sigma_2$ represent lifts of $\sigma_1,\sigma_2 \in \Gal(K/F)$ to the group $\Gal(K(\sqrt{\gamma})/F)$, then we have
\begin{align*}
[N_{K/K_1}(\gamma)]_1 = [1]_1 &\Leftrightarrow \hat\sigma_2^2 = \text{id}\\
[N_{K/K_2}(\gamma)]_2 = [1]_2 &\Leftrightarrow \hat\sigma_1^2 = \text{id}\\
[N_{K/K_3}(\gamma)]_3 = [1]_3 &\Leftrightarrow \left(\hat\sigma_1\hat\sigma_2\right)^2 = \text{id}.
\end{align*}
\end{lemma}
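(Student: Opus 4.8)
My plan is to work inside the Galois group $\Gamma := \Gal(K(\sqrt{\gamma})/F)$, which sits in an exact sequence
$$1 \longrightarrow \Gal(K(\sqrt{\gamma})/K) \longrightarrow \Gamma \longrightarrow \Gal(K/F) \longrightarrow 1,$$
where the kernel is cyclic of order $2$ (or trivial, but $[\gamma]\neq[1]$ and $[\gamma]$ fixed means $K(\sqrt\gamma)/K$ is degree $2$; I should note $K(\sqrt\gamma)/F$ is Galois precisely because $[\gamma]\in J(K)^G$). Write $\tau$ for the nontrivial element of $\Gal(K(\sqrt{\gamma})/K)$, so $\tau(\sqrt\gamma) = -\sqrt\gamma$. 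The point is that for a lift $\hat\sigma_i$ of $\sigma_i$, the element $\hat\sigma_i^2$ lies in the kernel, so $\hat\sigma_i^2\in\{\mathrm{id},\tau\}$, and the same goes for $(\hat\sigma_1\hat\sigma_2)^2$ since $\hat\sigma_1\hat\sigma_2$ lifts $\sigma_1\sigma_2$. Thus each of the three right-hand conditions is genuinely a yes/no question about which of two values a squared lift takes; note also this is independent of the choice of lift, since changing $\hat\sigma_i$ to $\tau\hat\sigma_i$ changes $\hat\sigma_i^2$ by $\hat\sigma_i\tau\hat\sigma_i^{-1}\cdot\tau = \tau^2 = \mathrm{id}$ (using that $\tau$ is central, which holds because $\Gal(K(\sqrt\gamma)/K)$ is normal of order $2$ hence central).

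**The computation.**
By symmetry it suffices to prove the first equivalence; the second follows by swapping the roles of $\sigma_1,\sigma_2$, and the third by replacing the pair $(\sqrt{a_1},\sqrt{a_2})$ with $(\sqrt{a_1a_2},\sqrt{a_2})$ (equivalently, working with $\sigma_1\sigma_2$ in place of $\sigma_1$), so that $K_3$ plays the role of $K_1$ and $\hat\sigma_1\hat\sigma_2$ the role of $\hat\sigma_1$. For the first: I want to relate $\hat\sigma_2^2$ to $N_{K/K_1}(\gamma)$. Pick $\hat\sigma_2$ and compute $\hat\sigma_2^2(\sqrt\gamma)$ by applying $\hat\sigma_2$ twice. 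Since $\hat\sigma_2(\sqrt\gamma)^2 = \hat\sigma_2(\gamma) = \sigma_2(\gamma)$, we can write $\hat\sigma_2(\sqrt\gamma) = \sqrt{\sigma_2(\gamma)}$ for a suitable choice of square root in $K(\sqrt\gamma)$; applying $\hat\sigma_2$ again gives $\hat\sigma_2^2(\sqrt\gamma) = \pm\sqrt{\sigma_2(\gamma)\cdot\sigma_2^2(\gamma)} = \pm\sqrt{\gamma\,\sigma_2(\gamma)} = \pm\sqrt{N_{K/K_1}(\gamma)}$, where the sign records whether $\hat\sigma_2^2 = \mathrm{id}$ or $\tau$. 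Now $N_{K/K_1}(\gamma)\in K_1^\times$, and since $[\gamma]$ is $\sigma_1$-fixed one checks $N_{K/K_1}(\gamma)\in K^{\times 2}$, so $[N_{K/K_1}(\gamma)]_1$ makes sense. The claim $[N_{K/K_1}(\gamma)]_1 = [1]_1$ says $N_{K/K_1}(\gamma)\in K_1^{\times 2}$; in that case $\sqrt{N_{K/K_1}(\gamma)}\in K_1 \subseteq K$, so $\hat\sigma_2^2$ fixes $\sqrt\gamma$ up to sign and one must pin down which — here I would choose the square roots compatibly (e.g. track $\hat\sigma_2(\sqrt\gamma)/\sqrt\gamma$, a $4$th root of unity issue) to conclude $\hat\sigma_2^2 = \mathrm{id}$. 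Conversely, if $\hat\sigma_2^2 = \mathrm{id}$, then $\hat\sigma_2^2(\sqrt\gamma) = \sqrt\gamma\in K$, forcing $N_{K/K_1}(\gamma)$ to be a square in $K$; combined with it lying in $K_1^\times$ and a standard descent ($K^{\times 2}\cap K_1^\times = K_1^{\times 2}$ when $K/K_1$ is quadratic and... actually this needs the $\sigma_1$-invariance again), one gets $N_{K/K_1}(\gamma)\in K_1^{\times 2}$.

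**The main obstacle.**
The delicate point is the bookkeeping of signs/square-root choices: $\hat\sigma_2^2(\sqrt\gamma) = \pm\sqrt\gamma$ versus $\mathrm{id}$ vs $\tau$ requires care, and the naive computation above has a gap when $\sqrt{N_{K/K_1}(\gamma)}$ is only defined up to sign. I expect the clean way around this is not to track square roots of $\gamma$ directly but to use Kummer-theoretic/cocycle language: a lift $\hat\sigma_2$ corresponds to a choice $c\in K^\times$ with $c^2 = \gamma\cdot\sigma_2(\gamma)^{?}$... more precisely, $\hat\sigma_2$ acts on $\sqrt\gamma$ by $\sqrt\gamma\mapsto b\sqrt\gamma$ for some $b\in K$ with $b\,\sigma_2(b) \equiv \pm 1$, and $\hat\sigma_2^2 = \mathrm{id} \iff b\,\sigma_2(b) = 1 \iff$ (Hilbert 90 for $K/K_1$, noting $\sigma_2$ generates $\Gal(K/K_1)$) $b = d/\sigma_2(d)$ for some $d$, which after squaring is exactly the statement $N_{K/K_1}(\gamma)\in K_1^{\times 2}$. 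Invoking Hilbert 90 for the quadratic extension $K/K_1$ is what converts the a priori "sign" into the norm condition cleanly, and this is also thematically consistent with the paper's emphasis on Hilbert 90. Once the first equivalence is established this way, the other two are immediate by the symmetry remarks, so the whole proof reduces to getting this one cocycle computation exactly right.
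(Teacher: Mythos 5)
Your final (cocycle) argument is correct, but it reaches the conclusion by a different mechanism than the paper. You write $\hat\sigma_2(\sqrt{\gamma}) = b\sqrt{\gamma}$ with $b \in K^\times$, $b^2 = \sigma_2(\gamma)/\gamma$, observe $\hat\sigma_2^2 = \mathrm{id} \Leftrightarrow b\,\sigma_2(b) = N_{K/K_1}(b) = 1$, and then invoke Hilbert 90 for the quadratic extension $K/K_1$ to convert this into $N_{K/K_1}(\gamma) \in K_1^{\times 2}$; this chain does close up (from $b = d/\sigma_2(d)$ and $b^2 = \sigma_2(\gamma)/\gamma$ one gets $\gamma d^2 \in K_1^\times$, hence $N_{K/K_1}(\gamma) \in K_1^{\times 2}$, and conversely $N_{K/K_1}(\gamma) = k_1^2$ forces $b = \pm k_1/\gamma$ and so $b\,\sigma_2(b) = 1$ directly), though those two short verifications are exactly what your phrase ``which after squaring is exactly the statement'' is hiding and would need to be written out. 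The paper instead avoids any sign bookkeeping by the factorization $\sqrt{\gamma}^{\,\hat\sigma_2^2 - 1} = \bigl(\sqrt{\gamma}^{\,\hat\sigma_2 + 1}\bigr)^{\hat\sigma_2 - 1}$: the inner term is $\pm\sqrt{N_{K/K_1}(\gamma)}$, which by Kummer theory for $K/K_1$ equals $\pm\sqrt{a_2}^{\,\varepsilon}k_1$ with $k_1 \in K_1^\times$, and applying $\sigma_2 - 1$ kills both the ambiguous sign and $k_1$, leaving $(-1)^\varepsilon$; so the paper's key input is the Kummer description of $K_1^\times \cap K^{\times 2}$ where yours is Hilbert 90 for $K/K_1$ --- comparable in depth, and yours is arguably more in the spirit of the paper's later Hilbert 90 themes, while the paper's gives both implications in one displayed computation. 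Two cautions on the parts you flagged or glossed: in your ``naive'' computation, $\hat\sigma_2^2(\sqrt{\gamma}) = \pm\sqrt{\gamma}$ (not $\pm\sqrt{N_{K/K_1}(\gamma)}$; the correct statement is $\sqrt{\gamma}\cdot\hat\sigma_2(\sqrt{\gamma}) = \pm\sqrt{N_{K/K_1}(\gamma)}$), and the identity $K^{\times 2} \cap K_1^\times = K_1^{\times 2}$ is genuinely false (the intersection is $K_1^{\times 2} \cup a_2K_1^{\times 2}$) --- but your Hilbert 90 route does not use either, so these are superseded rather than fatal. Finally, in the symmetry step for the third equivalence, $\sigma_1\sigma_2$ should take the role of $\sigma_2$ (paired with $K_3$ in the role of $K_1$), not of $\sigma_1$; the relabeling $(\sqrt{a_1a_2},\sqrt{a_2})$ you propose does work once the roles are matched correctly, and your observation that the conclusion is independent of the choice of lifts (via centrality of the kernel) is a worthwhile point the paper leaves implicit.
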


\begin{proof}
We consider the first statement first.  Observe that we already know that $\hat\sigma_2^2$ acts trivially on $\sqrt{a_1}$ and $\sqrt{a_2}$, so we only need to determine the action of $\hat \sigma_2^2$ on $\sqrt{\gamma}$.  For this, note that
$$\sqrt{\gamma}^{\hat\sigma_2^2-1} = \left(\sqrt{\gamma}^{\hat \sigma_2+1}\right)^{\hat \sigma_2-1} = \left(\pm\sqrt{\gamma^{\sigma_2+1}}\right)^{\hat \sigma_2-1}.$$ Since $[\gamma] \in J(K)^G$ we have that $[\gamma]^{\sigma_2+1} = [N_{K/K_1}(\gamma)]=[1]$.  Hence we have $N_{K/K_1}(\gamma) \in K_1^\times \cap K^{\times 2}$, and by Kummer theory this means that $N_{K/K_1}(\gamma) = a_2^{\varepsilon}k_1^2$ for some $\varepsilon \in \{0,1\}$ and $k_1 \in K_1^\times$. Note that $\varepsilon = 0$ if and only if $N_{K/K_1}(\gamma) \in K_1^{\times 2}$, which is equivalent to $[N_{K/K_1}(\gamma)]_1 = [1]_1$. Hence our previous calculation continues
$$\sqrt{\gamma}^{\hat\sigma_2^2-1} = \left(\pm \sqrt{a_2}^{\varepsilon}k_1\right)^{\hat \sigma_2-1} =  \left(\pm \sqrt{a_2}^{\varepsilon}k_1\right)^{\sigma_2-1} = (-1)^\varepsilon.$$This gives the desired result.

Similar calculations give the other two results.
\end{proof}

\begin{corollary}\label{le:supernorm.as.F.detector}
Define $T:J(K)^G \to \bigoplus_{i=1}^3 (K_i^\times\cap K^{\times 2})/K_i^{\times 2}$ by $$T([\gamma]) = ([N_{K/K_1}(\gamma)]_1,[N_{K/K_2}(\gamma)]_2,[N_{K/K_3}(\gamma)]_3).$$  Then $\ker(T)=[F^\times]$.
\end{corollary}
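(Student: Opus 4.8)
The plan is to prove the two inclusions $[F^\times] \subseteq \ker(T)$ and $\ker(T) \subseteq [F^\times]$ separately, and the first is immediate: if $f \in F^\times$ then $f$ is fixed by $\Gal(K/K_i)$ for each $i \in \{1,2,3\}$, so $N_{K/K_i}(f) = f^2 \in K_i^{\times 2}$, giving $[N_{K/K_i}(f)]_i = [1]_i$ and hence $T([f]) = (0,0,0)$.

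For the reverse inclusion I would take $[\gamma] \in \ker(T)$ and assume $[\gamma] \neq [1]$ (the trivial class is visibly in $[F^\times]$), so that $[\gamma] \in J(K)^G \setminus \{[1]\}$ and Lemma~\ref{le:Galois.groups.for.fixed.elements} applies. That lemma tells us $K(\sqrt{\gamma})/F$ is Galois --- of degree $8$, since $\gamma \notin K^{\times 2}$ forces $[K(\sqrt{\gamma}):K] = 2$ --- and that, for lifts $\hat\sigma_1,\hat\sigma_2$ of $\sigma_1,\sigma_2$, the hypothesis $T([\gamma]) = (0,0,0)$ translates, through its three equivalences, into the relations $\hat\sigma_1^2 = \hat\sigma_2^2 = (\hat\sigma_1\hat\sigma_2)^2 = \text{id}$. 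From $\hat\sigma_1\hat\sigma_2\hat\sigma_1\hat\sigma_2 = \text{id}$ and $\hat\sigma_1^2 = \hat\sigma_2^2 = \text{id}$ one deduces $\hat\sigma_1\hat\sigma_2 = \hat\sigma_2\hat\sigma_1$, so $\langle \hat\sigma_1,\hat\sigma_2\rangle$ is elementary abelian of order dividing $4$; as it surjects onto $\Gal(K/F) \simeq \mathbb{Z}/2\mathbb{Z}\oplus\mathbb{Z}/2\mathbb{Z}$ it has order exactly $4$ and maps isomorphically onto $\Gal(K/F)$, hence meets the kernel $\Gal(K(\sqrt{\gamma})/K)$ trivially. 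That kernel is normal in $\Gal(K(\sqrt{\gamma})/F)$ (as $K/F$ is Galois) and has order $2$, so it is central; therefore $\Gal(K(\sqrt{\gamma})/F) = \langle\hat\sigma_1,\hat\sigma_2\rangle \times \Gal(K(\sqrt{\gamma})/K)$ is elementary abelian of order $8$, i.e.\ $\simeq \mathbb{Z}/2\mathbb{Z}\oplus\mathbb{Z}/2\mathbb{Z}\oplus\mathbb{Z}/2\mathbb{Z}$. The Kummer-theoretic equivalence recalled just before Lemma~\ref{le:Galois.groups.for.fixed.elements} then yields $[\gamma] \in [F^\times]$, finishing the argument.

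Almost all of the content is packaged inside Lemma~\ref{le:Galois.groups.for.fixed.elements} and the Kummer dictionary, so I do not anticipate a serious obstacle; the one point requiring care is the group-theoretic step. The relations only assert that three named elements have order $2$, which by itself does not force a group of order $8$ to be elementary abelian (for instance $D_4$ has many involutions). What makes the conclusion work is that $\hat\sigma_1, \hat\sigma_2$ lift \emph{generators} of $\Gal(K/F)$ --- so $\langle\hat\sigma_1,\hat\sigma_2\rangle$ maps onto, hence isomorphically onto, $\Gal(K/F)$ --- combined with the fact that $\Gal(K(\sqrt{\gamma})/K)$ is central of order $2$; one must be sure to use both of these.
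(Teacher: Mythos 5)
Your argument is correct and is essentially the paper's intended one: the corollary is stated without a separate proof precisely because it follows by combining Lemma~\ref{le:Galois.groups.for.fixed.elements} with the Kummer-theoretic equivalence displayed just before it, which is exactly the derivation you give (together with the easy inclusion $[F^\times]\subseteq\ker(T)$ via $N_{K/K_i}(f)=f^2$). Your care on the group-theoretic step — using that $\hat\sigma_1,\hat\sigma_2$ lift generators of $\Gal(K/F)$ and that $\Gal(K(\sqrt{\gamma})/K)$ is central of order $2$ — correctly fills in the only detail the paper leaves implicit.
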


\begin{remark*}
Note that Kummer theory tells us that each $(K_i^\times \cap K^{\times 2})/K_i^{\times 2}$  consists of only two distinct classes, with representatives drawn from $\{1,a_1,a_2,a_1a_2\}$.  For example $(K_3^\times \cap K^{\times 2})/K_3^{\times 2}$ has $[1]_3=[a_1a_2]_3$ and $[a_1]_3=[a_2]_3$ as its elements. 
For the sake of lightening what would otherwise be fairly weighty notation, when considering elements in the image of $T$ we will  suppress the bracket notation in its coordinates; that is to say, if $T([\gamma]) = ([u]_1,[v]_2,[w]_3)$, then we will instead write $T([\gamma]) = (u,v,w)$.
\end{remark*}

Our goal, then, is to build a module whose fixed part spans the image of $T$, ideally while avoiding $[F^\times]$ as much as possible.  The first question we consider when looking for such a module is to determine when elements with a nontrivial image under $T$ are themselves in the image of either $1+\sigma_1$ or $1+\sigma_2$.  We start with

\begin{lemma}\label{le:indices.when.in.the.image}
Suppose $[N_{K/F}(\gamma)]= [1]$.  Then $[N_{K/K_1}(\gamma)],[N_{K/K_2}(\gamma)] \in J(K)^G$, and
\begin{itemize}
\item $[N_{K/F}(\gamma)]_F = [1]_F \Leftrightarrow T([N_{K/K_1}(\gamma)]) = (1,1,1) \Leftrightarrow T([N_{K/K_2}(\gamma)]) = (1,1,1)$;
\item $[N_{K/F}(\gamma)]_F = [a_1]_F \Leftrightarrow T([N_{K/K_1}(\gamma)]) = (1,a_1,a_1) \Leftrightarrow T([N_{K/K_2}(\gamma)]) = (1,1,a_1)$;
\item $[N_{K/F}(\gamma)]_F = [a_2]_F \Leftrightarrow T([N_{K/K_1}(\gamma)]) = (1,1,a_1) \Leftrightarrow T([N_{K/K_2}(\gamma)]) = (a_2,1,a_1)$; and
\item $[N_{K/F}(\gamma)]_F = [a_1a_2]_F \Leftrightarrow T([N_{K/K_1}(\gamma)]) = (1,a_1,1) \Leftrightarrow T([N_{K/K_2}(\gamma)]) = (a_2,1,1)$.
\end{itemize}
\end{lemma}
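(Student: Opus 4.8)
The plan is to set up notation: write $\gamma \in K^\times$ with $[N_{K/F}(\gamma)] = [1]$, so that $N_{K/F}(\gamma) \in K^{\times 2} \cap F^\times$, and by Kummer theory the class $[N_{K/F}(\gamma)]_F \in F^\times/F^{\times 2}$ takes one of the four values $[1]_F, [a_1]_F, [a_2]_F, [a_1a_2]_F$ — these being exactly the classes in $F^\times$ that become trivial in $K^\times/K^{\times 2}$. First I would verify that $[N_{K/K_1}(\gamma)], [N_{K/K_2}(\gamma)] \in J(K)^G$: since $N_{K/F}(\gamma) = N_{K_1/F}(N_{K/K_1}(\gamma))$ is already a square in $K^\times$, and more directly $[N_{K/K_1}(\gamma)]^{1+\sigma_2} = [N_{K/F}(\gamma)] = [1]$ while $[N_{K/K_1}(\gamma)]^{1+\sigma_1} = [N_{K/K_1}(\gamma)^2] = [1]$ (as $N_{K/K_1}(\gamma) \in K_1^\times$ is fixed by $\sigma_1$), so it lies in $J(K)^G$; likewise for $N_{K/K_2}(\gamma)$.

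Next, the heart of the argument: compute the coordinates of $T([N_{K/K_1}(\gamma)])$ and $T([N_{K/K_2}(\gamma)])$ in terms of $[N_{K/F}(\gamma)]_F$. For the first coordinate of $T([N_{K/K_1}(\gamma)])$ one needs $[N_{K/K_1}(N_{K/K_1}(\gamma))]_1 = [N_{K/K_1}(\gamma)^{1+\sigma_2}]_1$, but $N_{K/K_1}(\gamma) \in K_1^\times$ is fixed by $\sigma_2$ (wait — $\sigma_2$ does not fix $K_1 = F(\sqrt{a_1})$; rather $H_1 = \{\mathrm{id}, \sigma_2\}$ fixes $K_1$, so $\sigma_2$ DOES fix $K_1$), hence $N_{K/K_1}(\gamma)^{1+\sigma_2} = N_{K/K_1}(\gamma)^2$, giving first coordinate $[1]_1$ — consistent with every line of the claim. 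For the remaining coordinates I would express, e.g., $N_{K/K_2}(N_{K/K_1}(\gamma)) = N_{K/F}(\gamma)$ and relate $N_{K/K_3}(N_{K/K_1}(\gamma))$ to $N_{K/F}(\gamma)$ as well, then read off the class in $(K_i^\times \cap K^{\times 2})/K_i^{\times 2}$ via the identifications from the Remark (e.g. in $K_3$, $[a_1]_3 = [a_2]_3$ and $[1]_3 = [a_1a_2]_3$). The cleanest route is probably to pick a concrete representative of $[N_{K/F}(\gamma)]_F$ in each of the four cases (namely $1, a_1, a_2, a_1a_2$, since $N_{K/K_1}(\gamma)$ differs from such a representative by an element of $K_1^{\times 2}$... more carefully, $N_{K/K_1}(\gamma) \cdot (\text{representative})^{-1} \in K^{\times 2} \cap K_1^\times$, which by Kummer theory for $K/K_1$... ), and then directly compute the three coordinates of $T$ on that representative, which is a finite check exploiting $N_{K/K_i}(a_j) = a_j^{[K:K_i]/[\text{relevant}]}$ or rather $N_{K/K_i}(a_j) = a_j$ or $a_j^2$ depending on whether $a_j \in K_i$.

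The main obstacle I expect is bookkeeping the identification of classes in the non-obvious quotients $(K_i^\times \cap K^{\times 2})/K_i^{\times 2}$ — in particular handling the $K_3$ coordinate, where $a_1$ and $a_2$ coincide, and making sure the claimed equivalences (which are stated as biconditionals chaining $[N_{K/F}(\gamma)]_F$ through $T([N_{K/K_1}(\gamma)])$ and $T([N_{K/K_2}(\gamma)])$) really are reversible. Reversibility follows because the four possible values of $[N_{K/F}(\gamma)]_F$ are mapped to four \emph{distinct} triples on each side (one can check the listed triples are pairwise distinct), so a biconditional in each case is automatic once the forward implications are established. Thus the proof reduces to: (i) confirm $[N_{K/K_1}(\gamma)], [N_{K/K_2}(\gamma)] \in J(K)^G$; (ii) for each of the four representatives $r \in \{1, a_1, a_2, a_1a_2\}$ of $[N_{K/F}(\gamma)]_F$, compute $T$ applied to the corresponding norm classes using $N_{K/K_i}(r)$ and reduce modulo $K_i^{\times 2}$; (iii) observe the four output triples are distinct, upgrading the implications to equivalences.
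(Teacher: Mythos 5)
Your proposal is correct and follows essentially the same route as the paper's proof: verify that both norm classes lie in $J(K)^G$, then compute the coordinates of $T$ from $N_{K/K_1}(N_{K/K_1}(\gamma)) = N_{K/K_1}(\gamma)^2$ and $N_{K/K_i}(N_{K/K_1}(\gamma)) = N_{K/F}(\gamma)$ for $i \in \{2,3\}$ (and symmetrically for $N_{K/K_2}(\gamma)$), writing $N_{K/F}(\gamma) = f^2 a_1^{\varepsilon_1}a_2^{\varepsilon_2}$ and reducing modulo $K_i^{\times 2}$ using the identifications $[a_1]_1=[1]_1$, $[a_2]_2=[1]_2$, $[a_1a_2]_3=[1]_3$; the four resulting triples are distinct, which upgrades the implications to the stated equivalences exactly as you observe. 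Two small cautions that do not affect the argument: in your first paragraph the roles of $\sigma_1$ and $\sigma_2$ on $K_1$ are swapped (since $H_1=\{\mathrm{id},\sigma_2\}$, it is $1+\sigma_1$ that yields $N_{K/F}(\gamma)$ and $1+\sigma_2$ that squares), and the parenthetical ``cleanest route'' claim that $N_{K/K_1}(\gamma)\cdot r^{-1}\in K^{\times 2}\cap K_1^{\times}$ is false in general (it would force $T([N_{K/K_1}(\gamma)])=(1,1,1)$), so you should discard it in favor of the norm-composition computation you already describe.
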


\begin{proof}
Observe first that since $[N_{K/F}(\gamma)] = [1]$, Kummer theory tells us that $[N_{K/F}(\gamma)]_F \in \{[1]_F,[a_1]_F,[a_2]_F,[a_1a_2]_F\}$.  So let us write $N_{K/F}(\gamma) = f^2 a_1^{\varepsilon_1}a_2^{\varepsilon_2}$.  The result then follows from the following calculations:
\begin{align*}
[N_{K/K_1}(N_{K/K_1}(\gamma))]_1 &= [N_{K/K_1}(\gamma)^2]_1 = [1]_1\\
[N_{K/K_2}(N_{K/K_1}(\gamma))]_2 &= [N_{K/F}(\gamma)]_2 = [f^2a_1^{\varepsilon_1}a_2^{\varepsilon_2}]_2 = [a_1]_2^{\varepsilon_1}\\
[N_{K/K_3}(N_{K/K_1}(\gamma))]_3 &= [N_{K/F}(\gamma)]_3 = [f^2a_1^{\varepsilon_1}a_2^{\varepsilon_2}]_3 = [a_1]_3^{\varepsilon_1+\varepsilon_2}\\
[N_{K/K_1}(N_{K/K_2}(\gamma))]_1 &= [N_{K/F}(\gamma)]_1 = [f^2a_1^{\varepsilon_1}a_2^{\varepsilon_2}]_1 = [a_2]_1^{\varepsilon_2}\\
[N_{K/K_2}(N_{K/K_2}(\gamma))]_2 &= [N_{K/K_2}(\gamma)^2]_2 = [1]_2\\
[N_{K/K_3}(N_{K/K_2}(\gamma))]_3 &= [N_{K/F}(\gamma)]_3 = [f^2a_1^{\varepsilon_1}a_2^{\varepsilon_2}]_3 = [a_1]_3^{\varepsilon_1+\varepsilon_2}\qedhere
\end{align*}
\end{proof}

\begin{corollary}\label{cor:dimension.two.fixed.parts.are.from.F}
Suppose that $[\gamma] \in J(K)$ generates a module isomorphic to $\mathbb{F}_2[\overline{G_i}]$ for some $i \in \{1,2,3\}$.  Then $T(\langle[\gamma]\rangle^G) = \{(1,1,1)\}$.  
\end{corollary}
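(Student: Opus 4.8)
The strategy is to reduce everything to Lemma~\ref{le:indices.when.in.the.image}: in each of the three cases $i\in\{1,2,3\}$ I would identify the one–dimensional fixed part $\langle[\gamma]\rangle^G$ as the class of a specific supernorm of $\gamma$, check that $[N_{K/F}(\gamma)]=[1]$, and then read off the conclusion using that $T$ is a homomorphism, so that $T([1])=(1,1,1)$.

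The first step is to unwind the hypothesis $\langle[\gamma]\rangle\simeq\mathbb{F}_2[\overline{G_i}]$ into statements about norms, working in the two–dimensional group ring $\mathbb{F}_2[\overline{G_i}]$ and its augmentation ideal (whose square is trivial). Since $H_i$ acts trivially on $\mathbb{F}_2[\overline{G_i}]$, one gets: for $i=1$ that $[N_{K/K_1}(\gamma)]=[1]$ while $\langle[\gamma]\rangle^G=\langle[N_{K/K_2}(\gamma)]\rangle$; for $i=2$, symmetrically, that $[N_{K/K_2}(\gamma)]=[1]$ while $\langle[\gamma]\rangle^G=\langle[N_{K/K_1}(\gamma)]\rangle$; and for $i=3$ that $\overline{\sigma_1}=\overline{\sigma_2}$ in $\overline{G_3}$ forces $[N_{K/K_1}(\gamma)]=[N_{K/K_2}(\gamma)]$, and this common class generates $\langle[\gamma]\rangle^G$. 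In every case $[N_{K/F}(\gamma)]=[1]$: for $i=1,2$ because applying $1+\sigma_1$ or $1+\sigma_2$ to a square produces a square, and for $i=3$ because $(1+\sigma_1)(1+\sigma_2)$ maps to $(1+\overline{\sigma_1})^2=0$ in $\mathbb{F}_2[\overline{G_3}]$ while $N_{K/F}$ is the action of $(1+\sigma_1)(1+\sigma_2)$.

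With the hypothesis of Lemma~\ref{le:indices.when.in.the.image} in hand, the cases $i=1,2$ are immediate from its first item: for $i=1$, since $T([N_{K/K_1}(\gamma)])=T([1])=(1,1,1)$, the displayed equivalences give $[N_{K/F}(\gamma)]_F=[1]_F$ and hence $T([N_{K/K_2}(\gamma)])=(1,1,1)$, so $T(\langle[\gamma]\rangle^G)=\{(1,1,1)\}$; the case $i=2$ is identical after swapping the roles of $K_1$ and $K_2$. For $i=3$ I would argue by contradiction: if $[N_{K/F}(\gamma)]_F$ were one of $[a_1]_F,[a_2]_F,[a_1a_2]_F$, then the two vectors $T([N_{K/K_1}(\gamma)])$ and $T([N_{K/K_2}(\gamma)])$ recorded in Lemma~\ref{le:indices.when.in.the.image} would differ in at least one coordinate — this uses the Remark after Corollary~\ref{le:supernorm.as.F.detector}, namely that the two classes occurring in each factor $(K_j^\times\cap K^{\times2})/K_j^{\times2}$ are genuinely distinct — contradicting $[N_{K/K_1}(\gamma)]=[N_{K/K_2}(\gamma)]$. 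Hence $[N_{K/F}(\gamma)]_F=[1]_F$, and the lemma again forces $T$ of the common fixed-part generator to be $(1,1,1)$.

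The only step that is more than bookkeeping is the translation in the second paragraph — pinning down the generator of $\langle[\gamma]\rangle^G$ and the vanishing of $[N_{K/F}(\gamma)]$ — but this is a short computation in $\mathbb{F}_2[\overline{G_i}]$, so I do not expect a real obstacle. The one genuine point of care is notational: keeping straight the correspondence $H_1=\langle\sigma_2\rangle$, $H_2=\langle\sigma_1\rangle$, $H_3=\langle\sigma_1\sigma_2\rangle$ (so that $N_{K/K_i}$ is the action of the appropriate $1+\sigma$), together with the fact that each coordinate of $T$ lies in the correct two–element quotient, which is precisely what legitimizes the coordinatewise comparison in the $i=3$ case.
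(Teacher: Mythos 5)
Your proposal is correct and follows essentially the same route as the paper: unwind the hypothesis $\langle[\gamma]\rangle\simeq\mathbb{F}_2[\overline{G_i}]$ into the norm statements ($[N_{K/K_1}(\gamma)]=[1]$ for $i=1$, symmetrically for $i=2$, and $[N_{K/K_1}(\gamma)]=[N_{K/K_2}(\gamma)]$ for $i=3$, with $[N_{K/F}(\gamma)]=[1]$ throughout), then invoke Lemma~\ref{le:indices.when.in.the.image}, noting for $i=3$ that the lemma allows no nontrivial case in which the two supernorms have equal image under $T$. The only cosmetic difference is that the paper phrases the $i=1,2$ cases as a contradiction while you argue directly from the biconditional; your extra remark that the two classes in each $(K_j^\times\cap K^{\times 2})/K_j^{\times 2}$ are genuinely distinct is a valid (and correct) point of care that the paper leaves implicit.
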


\begin{proof}
We proceed by cases.  If $\langle[\gamma]\rangle \simeq \mathbb{F}_2[\overline{G_1}]$, then we have $[\gamma]^{1+\sigma_2} = [1]$, so that $[N_{K/F}(\gamma)] = [1]$.  Since $\langle[\gamma]\rangle^G = \langle [\gamma]^{1+\sigma_1}\rangle$ in this case, our objective is to show that $T([\gamma]^{1+\sigma_1}) = (1,1,1)$.  But since $[\gamma]^{1+\sigma_1} = [N_{K/K_2}(\gamma)]$, the previous lemma tells us that if we have $T([\gamma]^{1+\sigma_1}) \neq (1,1,1)$ then we have $(1,1,1) \neq T([N_{K/K_1}(\gamma)]) = T([\gamma]^{1+\sigma_2}) = T([1])$ as well, a contradiction.  The same argument gives the result for $i=2$.

For $i=3$, a variation on this argument works: we know we have $[N_{K/K_1}(\gamma)] = [N_{K/K_2}(\gamma)]$, and yet the lemma above provides no nontrivial case in which $T([N_{K/K_1}(\gamma)]) = T([N_{K/K_2}(\gamma)])$.
\end{proof}

Lemma \ref{le:indices.when.in.the.image} tells us a relationship between the possible values under $T$ for elements from $J(K)^G$ which are in the image of a common element; if two elements $[x]$ and $[y]$ have ``compatible" images under $T$ (i.e., allowable in light of Lemma \ref{le:indices.when.in.the.image}), is it the case that there exists some $[\gamma]$ so that $[x] = [N_{K/K_1}(\gamma)]$ and $[y] = [N_{K/K_2}(\gamma)]$?  The answer to this is generally ``no," but there is a weaker version which we will take advantage of.

\begin{lemma}\label{le:compatible.supernorms.means.preimage.exists.somewhere}
Suppose that $[x],[y]\in J(K)^G$ are given, and that $$(T([x]),T([y])) \in \left\{\left((1,a_1,a_1),(1,1,a_1)\right),\left((1,1,a_1),(a_2,1,a_1)\right),\left((1,a_1,1),(a_1,1,1)\right)\right\}.$$  Then there exists some $[\gamma]$ with $[N_{K/F}(\gamma)]=[1]$ so that $T([N_{K/K_1}(\gamma)]) = T([x])$ and $T([N_{K/K_2}(\gamma)]) = T([y])$.
\end{lemma}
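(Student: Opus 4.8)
The plan is to reduce the statement to the converse direction of Lemma~\ref{le:indices.when.in.the.image}. Recall that lemma starts from an element $[\delta]$ with $[N_{K/F}(\delta)] = [1]$ and computes $T([N_{K/K_1}(\delta)])$ and $T([N_{K/K_2}(\delta)])$ as a matched pair depending on the coset $[N_{K/F}(\delta)]_F \in \{[1]_F,[a_1]_F,[a_2]_F,[a_1a_2]_F\}$. The three pairs $(T([x]),T([y]))$ listed in the present lemma are exactly the three \emph{nontrivial} matched pairs appearing there --- corresponding to $[N_{K/F}(\delta)]_F$ equal to $[a_1]_F$, $[a_2]_F$, and $[a_1a_2]_F$ respectively. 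So if I can produce \emph{some} $[\delta]$ with $[N_{K/F}(\delta)] = [1]$ and $[N_{K/F}(\delta)]_F$ in the correct $F^\times/F^{\times 2}$-coset, then taking $[\gamma] := [N_{K/K_1}(\delta)]$ (or $[N_{K/K_2}(\delta)]$, whichever Lemma~\ref{le:indices.when.in.the.image} assigns to the role producing $T([x])$ in coordinate one and $T([y])$) will satisfy the conclusion. The key point is that I do not need $[x]$ and $[y]$ themselves to be norms of a common element; I only need to realize the \emph{$T$-values}, and those are governed entirely by the $F$-coset of a norm.

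The first step, then, is to observe that each of the hypothesized $T$-values is itself realizable: from $[x] \in J(K)^G$ with $T([x]) = (1,a_1,a_1)$, say, I want to manufacture an element whose $K/F$-norm lies in $K^{\times 2}$ but whose norm coset over $F$ is $[a_1]_F$. The natural candidate is to chase the relations defining $J(K)^G$: since $[x] \in J(K)^G$, we have $[x]^{1+\sigma_i} = [N_{K/K_i}(x)]$, and the hypothesis on $T([x])$ records which of these norms are trivial in which $K_i$ and which equal $a_1$ or $a_2$ there. The idea is to use a Hilbert~90 input for the biquadratic extension --- the paper flags this lemma as ``our first appearance of a Hilbert 90 result for biquadratic extensions'' --- to lift the compatibility data for $[x]$ and $[y]$ to an actual element $[\delta] \in J(K)$ with $[\delta]^{1+\sigma_1}$ and $[\delta]^{1+\sigma_2}$ prescribed, subject to the cocycle-type compatibility $([\delta]^{1+\sigma_1})^{1+\sigma_2} = ([\delta]^{1+\sigma_2})^{1+\sigma_1}$ (both equal $[N_{K/F}(\delta)]$). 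The compatibility relations among $T([x])$ and $T([y])$ in the three listed pairs are precisely what is needed for this consistency to hold, so that a Hilbert~90 / vanishing-of-$H^1$ argument produces $[\delta]$.

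Concretely, I would set up the obstruction as follows. Write $[x]$ and $[y]$ (after possibly multiplying by a class from $[F^\times]$, which does not change $T$) so that their images under $1+\sigma_1$ and $1+\sigma_2$ agree appropriately, then form the ``candidate'' two-variable data $[\delta]^{1+\sigma_2} := [x']$, $[\delta]^{1+\sigma_1} := [y']$ for suitable adjustments $[x'], [y']$, and verify the one compatibility equation $[x']^{1+\sigma_1} = [y']^{1+\sigma_2}$ holds in $J(K)$ using the explicit $T$-values (this is the routine computation I will not grind through here, but it is exactly the parity bookkeeping of Lemma~\ref{le:indices.when.in.the.image} run backwards). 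Once compatibility is checked, the biquadratic Hilbert~90 statement (in the form that a compatible pair of ``$1+\sigma_i$-coboundaries'' is a genuine pair of images of a single element, i.e. the relevant $H^1$ vanishes) yields $[\delta] \in J(K)$. Then $[N_{K/F}(\delta)] = [x']^{1+\sigma_1} = [1]$ and one reads off $[N_{K/F}(\delta)]_F$ from the construction; finally Lemma~\ref{le:indices.when.in.the.image} applied to $[\delta]$ gives $T([N_{K/K_1}(\delta)]) = T([x])$ and $T([N_{K/K_2}(\delta)]) = T([y])$, so $[\gamma] := [N_{K/K_1}(\delta)]$ works.

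The main obstacle I anticipate is formulating and invoking the correct Hilbert~90 statement for the Klein four-group extension: unlike the cyclic case, $H^1(G, K^\times)$-type vanishing here is more delicate, and the ``compatible pair lifts to a single element'' principle must be stated with the right hypotheses (one genuinely uses the $\sigma_i$-equivariance and the fact that the two composite norms coincide). Verifying that the three specific $T$-value pairs in the hypothesis do satisfy exactly the compatibility needed --- and no more --- is the crux; the listed pairs are not arbitrary, and part of the content of the lemma is that these are precisely the pairs for which the Hilbert~90 obstruction vanishes while the naive ``common preimage of $[x]$ and $[y]$ themselves'' may still fail. I expect the proof to spend most of its effort here, possibly by treating the three pairs one at a time and exhibiting the lift explicitly in each case rather than through a uniform cohomological statement.
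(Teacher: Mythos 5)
Your reduction is sound in outline: since $\ker(T)=[F^\times]$ and $T$ is a homomorphism, it would indeed suffice to produce an element $\delta$ with $[N_{K/F}(\delta)]=[1]$ whose class $[N_{K/F}(\delta)]_F$ lies in the appropriate coset ($[a_1]_F$, $[a_2]_F$, or $[a_1a_2]_F$), and then quote Lemma \ref{le:indices.when.in.the.image} (though note you should then take $[\gamma]:=[\delta]$ itself, not $[N_{K/K_1}(\delta)]$). The problem is that producing such a $\delta$ \emph{is} the entire content of the lemma, and the mechanism you sketch for it does not work. Your ``compatibility equation'' $[x']^{1+\sigma_1}=[y']^{1+\sigma_2}$ is vacuous: since $[x'],[y']\in J(K)^G$, both sides are automatically $[1]$ in $J(K)$, so no parity bookkeeping run backwards can extract the hypothesis on the $T$-values from it. And the lifting principle you then invoke --- that a ``compatible'' pair of fixed classes is, up to $[F^\times]$, the pair of partial norms of a single element --- is not a theorem you can appeal to; it is precisely a restatement of the conclusion, and no $H^1$-vanishing of that shape holds at the level of $J(K)$ (the paper explicitly notes, just before this lemma, that the stronger statement with $[x]$ and $[y]$ themselves as the partial norms is generally false). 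So the step where you say the proof will ``spend most of its effort'' is exactly the step you have deferred, and the route you propose for it begs the question.

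For comparison, the paper's argument supplies the missing input from outside $J(K)$: the hypothesis $T([x])=(1,a_1,a_1)$ is translated, via Lemma \ref{le:Galois.groups.for.fixed.elements}, into the statement that $K(\sqrt{x})/F$ solves the embedding problem $\mathbb{Z}/4\mathbb{Z}\oplus\mathbb{Z}/2\mathbb{Z}\twoheadrightarrow\Gal(K/F)$, so $K_1/F$ embeds in a cyclic quartic extension and Albert's theorem gives $a_1\in N_{K_1/F}(K_1^\times)$; similarly $T([y])=(1,1,a_1)$ gives a $D_4$-extension and hence $a_1\in N_{K_2/F}(K_2^\times)$. The genuine ``Hilbert 90 for biquadratic extensions'' is then Wadsworth's lemma, applied to elements $k_1\in K_1^\times$, $k_2\in K_2^\times$ with $N_{K_1/F}(k_1)=N_{K_2/F}(k_2)=a_1$: it produces $\gamma\in K^\times$ and $f\in F^\times$ with $N_{K/K_1}(\gamma)=fk_1$ and $N_{K/K_2}(\gamma)=fk_2$, whence $N_{K/F}(\gamma)=f^2a_1$, i.e.\ $[N_{K/F}(\gamma)]=[1]$ with $[N_{K/F}(\gamma)]_F=[a_1]_F$, and Lemma \ref{le:indices.when.in.the.image} finishes. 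Note that this Hilbert 90 statement lives at the level of $K^\times$ with genuine norm equalities, not at the level of square classes; your proposal contains no step that produces the norm equations over $K_1$ and $K_2$, and without them there is nothing for any Hilbert 90 statement to act on.
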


\begin{proof}
Our approach will be to argue that the appearance of these elements in the image of $T$ guarantees the solvability of certain embedding problems, from which we deduce the solvability of certain equations involving norms. 

We first handle the case where $\im(T)$ contains $\{(1,a_1,a_1),(1,1,a_1)\}$.  Since $T([x]) = (1,a_1,a_1)$, we know from Lemma \ref{le:Galois.groups.for.fixed.elements} that in $K(\sqrt{x})/F$ the generators $\sigma_1,\sigma_2 \in \Gal(K/F)$ extend to elements $\hat\sigma_1,\hat\sigma_2 \in \Gal(K(\sqrt{x})/F)$ which satisfy the relations $$\hat\sigma_2^2 = \hat \sigma_1^4 = (\hat\sigma_1\hat\sigma_2)^4 = \text{id}.$$  Hence $\Gal(K(\sqrt{x})/F)\twoheadrightarrow \Gal(K/F)$ solves the embedding problem $\mathbb{Z}/4\mathbb{Z}\oplus\mathbb{Z}/2\mathbb{Z} \twoheadrightarrow\mathbb{Z}/2\mathbb{Z}\oplus\mathbb{Z}/2\mathbb{Z}$, and in particular $K_1/F$ embeds in a cyclic extension of degree $4$.  By \cite[Theorem 3]{Albert} we have $-1\in N_{K_1/F}(K_1^\times)$; since we have $-a_1 = (\sqrt{a_1})^{1+\sigma_1}=N_{K_1/F}(\sqrt{a_1})$, it therefore follows that $a_1 \in N_{K_1/F}(K_1^\times)$, say $a_1 = N_{K_1/F}(k_1)$ for $k_1 \in K_1^\times$.

On the other hand, since $T([y]) = (1,1,a_1)$, we know from Lemma \ref{le:Galois.groups.for.fixed.elements} that the generators $\sigma_1,\sigma_2 \in \Gal(K/F)$ extend to elements $\tilde\sigma_1,\tilde\sigma_2 \in \Gal(K(\sqrt{y})/F)$ that satisfy $$\tilde\sigma_2^2=\tilde\sigma_1^2 = (\tilde\sigma_1\tilde\sigma_2)^4 = \text{id}.$$  From this we see that $\Gal(K(\sqrt{y})/F) \twoheadrightarrow \Gal(K/F)$ solves the embeddling problem $D_4 \twoheadrightarrow \mathbb{Z}/2\mathbb{Z}\oplus\mathbb{Z}/2\mathbb{Z}$, where the kernel of the latter surjection is $\langle(\tilde\sigma_1\tilde\sigma_2)^2\rangle$.  By a well-known result for the solvability of such embedding problems (see, e.g., \cite[Proposition III.3.3]{JY}), we therefore have $a_1 \in N_{K_2/F}(K_2^\times)$, say $a_1 = N_{K_2/F}(k_2)$ for $k_2 \in K_2^\times$.  

By \cite[Lemma 2.14]{Wadsworth}, there exists some $\gamma \in K^\times$ and $f \in F^\times$ so that $N_{K/K_1}(\gamma) = fk_1$ and $N_{K/K_2}(\gamma) = fk_2$.  In particular we have $[N_{K/F}(\gamma)] = [f^2a_1] = [1]$, and so $[N_{K/F}(\gamma)]_F = [a_1]_F$.  An application of Lemma \ref{le:indices.when.in.the.image} finishes this case.

The second case is effectively identical to the first.  For the last case, note that the images we're given provide two $D_4$-extensions over $K/F$, one in which $\sigma_1$ extends to an element of order four, and another where $\sigma_2$ extends to an element of order $4$.  In the former case we then get $a_1a_2 \in N_{K_2/F}(K_2^\times)$, and in the latter we get $a_1a_2 \in N_{K_1/F}(K_1^\times)$.  From here the proof proceeds as before.
\end{proof}

\begin{remark*}
The use of \cite[Lemma 2.14]{Wadsworth}  amounts to an appeal to Hilbert 90 for biquadratic extensions.  See \cite{DMSS}.
\end{remark*}

We are now prepared for the main result of this section.

\begin{theorem}\label{th:construction.of.X}
There exists $X \subseteq J(K)$ with $T(X^G) = \im(T)$, so that
$$
X \simeq \left\{\begin{array}{ll}
\{[1]\},&\text{ if $\dim(\im(T)) = 0$}\\
\mathbb{F}_2,&\text{ if $\dim(\im(T)) = 1$}\\
\Omega^{-1},&\text{ if $\dim(\im(T)) = 2$ and $\im(T)$ is one of the ``coordinate planes"}\\
\mathbb{F}_2\oplus \mathbb{F}_2,&\text{ if $\dim(\im(T)) = 2$ and $\im(T)$ is not one of the ``coordinate planes"}\\
\Omega^{-2},&\text{ if $\dim(\im(T)) = 3$ and $T\left([N_{K/K_1}(K^\times)] \cap [N_{K/K_2}(K^\times)]\right) \neq \{(1,1,1)\}$}\\
\Omega^{-1}\oplus\Omega^{-1},&\text{ if $\dim(\im(T)) = 3$ and $T\left([N_{K/K_1}(K^\times)] \cap [N_{K/K_2}(K^\times)]\right) = \{(1,1,1)\}$.}
\end{array}\right.
$$
In all cases except the last, we have $[F^\times] \cap X^G = \{[1]\}$; in the last case we have $\dim(X^G \cap [F^\times]) = 1$ and $X^G \cap (\mathfrak{B}+\mathfrak{C}+\mathfrak{D}) =\{[1]\}$.
\end{theorem}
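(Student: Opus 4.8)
The plan is to build $X$ one ``layer'' at a time, grouped according to the dimension of $\im(T)$, always using Lemma~\ref{le:exclusion.lemma} to check independence of the pieces we assemble via their fixed parts, and always using the structural dictionary of Lemma~\ref{le:Galois.groups.for.fixed.elements} (translating vanishing of the supernorms $[N_{K/K_i}(\gamma)]_i$ into orders of lifts $\hat\sigma_i$) to identify the isomorphism type of each cyclic piece $\langle[\gamma]\rangle$. The key input that lets the ``large'' modules $\Omega^{-1},\Omega^{-2}$ appear at all is Lemma~\ref{le:compatible.supernorms.means.preimage.exists.somewhere}: whenever two elements of $J(K)^G$ have $T$-values that are ``compatible'' in the sense of Lemma~\ref{le:indices.when.in.the.image}, there is a genuine $[\gamma]$ with $[N_{K/F}(\gamma)]=[1]$ realizing both supernorms, and then $[N_{K/K_1}(\gamma)]$ and $[N_{K/K_2}(\gamma)]$ together generate a copy of $\Omega^{-1}$ (with $[\gamma]$ sitting on top). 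I would organize the proof as a careful case analysis on $d:=\dim(\im(T))\in\{0,1,2,3\}$.

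First, the easy cases. If $d=0$ then $\im(T)=\{(1,1,1)\}$ and $X=\{[1]\}$ works by Corollary~\ref{le:supernorm.as.F.detector}. If $d=1$, pick any $[\gamma]\in J(K)^G$ with $T([\gamma])$ spanning $\im(T)$; since $T([\gamma])\neq(1,1,1)$, $[\gamma]\notin[F^\times]$, so $\langle[\gamma]\rangle$ is a nontrivial fixed module, i.e.\ $X\simeq\mathbb{F}_2$, and $X^G\cap[F^\times]=\{[1]\}$ because the nonzero element is not in $\ker T=[F^\times]$. For $d=2$: if $\im(T)$ is a ``coordinate plane'' (one of the three planes cut out by a single coordinate being forced trivial — these are precisely the ones that arise as $T$ of an image subspace, per Lemma~\ref{le:indices.when.in.the.image}), then the compatibility list in Lemma~\ref{le:compatible.supernorms.means.preimage.exists.somewhere} applies to a spanning pair $([x],[y])$, producing $[\gamma]$ with $[N_{K/F}(\gamma)]=[1]$ and $[N_{K/K_1}(\gamma)],[N_{K/K_2}(\gamma)]$ realizing the two $T$-values; then $M:=\langle[\gamma],[N_{K/K_1}(\gamma)],[N_{K/K_2}(\gamma)]\rangle$ is a $3$-dimensional module with the $\Omega^{-1}$ relations (apply $1+\sigma_2$ and $1+\sigma_1$ to $[\gamma]$), and one checks there are no extra relations exactly as in the $\Omega^1$/$\Omega^2$ verifications of Theorem~\ref{th:trivial.candidate} (a nontrivial dependence involving $[\gamma]$ would, after applying $1+\sigma_1$ or $1+\sigma_2$, collapse the two independent fixed elements). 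So $X\simeq\Omega^{-1}$ and $X^G$ is spanned by $[N_{K/K_1}(\gamma)],[N_{K/K_2}(\gamma)]$, neither of which nor whose product lies in $[F^\times]$ since all have nontrivial $T$-image; hence $X^G\cap[F^\times]=\{[1]\}$. If $\im(T)$ is a non-coordinate plane, it must be spanned by two elements whose $T$-values are \emph{not} on the compatibility list, so no common preimage is forced and we simply take $[\gamma_1],[\gamma_2]$ generating independent copies of $\mathbb{F}_2$ (independence via Lemma~\ref{le:exclusion.lemma} since their $T$-images are independent), giving $X\simeq\mathbb{F}_2\oplus\mathbb{F}_2$ with $X^G\cap[F^\times]=\{[1]\}$.

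The substantive case is $d=3$, where I would split on whether $T\big([N_{K/K_1}(K^\times)]\cap[N_{K/K_2}(K^\times)]\big)=\{(1,1,1)\}$. If it is \emph{not} trivial, there is a common supernorm value realized by a single $[\gamma]$ with $[N_{K/F}(\gamma)]=[1]$; by Lemma~\ref{le:indices.when.in.the.image} the pair $\big(T([N_{K/K_1}(\gamma)]),T([N_{K/K_2}(\gamma)])\big)$ is one of the compatible pairs and, crucially, spans a $2$-dimensional ``coordinate plane''; combining this $\Omega^{-1}$-piece built on $[\gamma]$ with one more element $[\delta]\in J(K)^G$ whose $T$-value completes a basis of $\im(T)$, one assembles a $5$-dimensional module carrying the $\Omega^{-2}$ relations. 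Here I would be most careful about (i) arranging $[\delta]$ and the two supernorms so the three $T$-values together span all of $\im(T)$ — which is possible precisely because $\dim(\im T)=3$ and the coordinate plane we get has dimension $2$ — and (ii) checking independence, again by pushing any putative relation through $1+\sigma_1,1+\sigma_2$ and invoking Lemma~\ref{le:exclusion.lemma} on the fixed parts. One then verifies $X^G$ has dimension $3$ mapping isomorphically onto $\im(T)$, so $X^G\cap[F^\times]=\ker(T|_{X^G})=\{[1]\}$. If instead the intersection condition gives $\{(1,1,1)\}$, then no $\Omega^{-2}$ can be formed from supernorm data and one must produce $\im(T)$ using \emph{two} $\Omega^{-1}$'s coming from two different compatibility pairs among the three coordinate planes: choose two coordinate planes $P_1,P_2$ with $P_1+P_2=\im(T)$, realize each as $X^G$ of an $\Omega^{-1}$ as above, and check the two $\Omega^{-1}$'s are independent via Lemma~\ref{le:exclusion.lemma} (their fixed parts are $P_1,P_2$, and $P_1\cap P_2$ is a line whose nonzero $T$-value is shared — here one uses the hypothesis that this shared value is \emph{not} jointly a supernorm of $N_{K/K_1}$ and $N_{K/K_2}$, which is exactly what prevents the two $\Omega^{-1}$'s from fusing into an $\Omega^{-2}$ and, after a short argument, forces genuine module-independence). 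The last sentence of the theorem — $\dim(X^G\cap[F^\times])=1$ and $X^G\cap(\mathfrak B+\mathfrak C+\mathfrak D)=\{[1]\}$ — then follows by computing: $X^G$ is $4$-dimensional with a $3$-dimensional image under $T$, so $\ker(T|_{X^G})=X^G\cap[F^\times]$ is a line, and that line is spanned by a product of the two ``apex'' classes whose image under the $\mathfrak B,\mathfrak C,\mathfrak D$-detecting maps $N_{K/K_i}$ can be read off to be trivial only on $F$, not on any of $\mathfrak B,\mathfrak C,\mathfrak D$ — i.e.\ the nonzero element of $X^G\cap[F^\times]$ has $[N_{K/F}]_F$-value forcing it outside $\mathfrak B+\mathfrak C+\mathfrak D$ by Lemma~\ref{le:indices.when.in.the.image}.

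The main obstacle I anticipate is the bookkeeping in the $d=3$, $\Omega^{-2}$ case: one must simultaneously (a) choose the right $[\gamma]$ so that its two supernorms land in a coordinate plane \emph{and} that plane together with a single extra fixed class exhausts $\im(T)$, (b) show no relations beyond the $\Omega^{-2}$ ones hold, and (c) verify the final two clauses about $X^G\cap[F^\times]$ and $X^G\cap(\mathfrak B+\mathfrak C+\mathfrak D)$ — the latter genuinely requiring the explicit $T$-vs-$N_{K/F}$ dictionary of Lemma~\ref{le:indices.when.in.the.image} to locate the distinguished line inside $[F^\times]$ and confirm it avoids $\mathfrak B,\mathfrak C,\mathfrak D$. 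Everything else is a repackaging of the techniques already deployed in Theorem~\ref{th:trivial.candidate} (build cyclic pieces, check no extra relations by applying $1+\sigma_i$, glue with Lemma~\ref{le:exclusion.lemma}).
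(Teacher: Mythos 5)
Your treatment of the cases $\dim(\im(T))\le 2$, and the overall strategy (realize $T$-values by fixed classes, use Lemma \ref{le:compatible.supernorms.means.preimage.exists.somewhere} to get $\Omega^{-1}$'s, check independence through fixed parts via Lemma \ref{le:exclusion.lemma}), matches the paper. The genuine gap is in the $\dim(\im(T))=3$, $\Omega^{-2}$ case. The hypothesis $T\left([N_{K/K_1}(K^\times)]\cap[N_{K/K_2}(K^\times)]\right)\neq\{(1,1,1)\}$ produces a class $[x]$ with nontrivial $T$-image that is a norm from both sides, but realized by two \emph{different} elements: $[x]=[N_{K/K_1}(\gamma_2)]=[N_{K/K_2}(\gamma_1)]$. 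Your reading --- ``a common supernorm value realized by a single $[\gamma]$ with $[N_{K/F}(\gamma)]=[1]$'' --- is impossible for a nontrivial $T$-value: Lemma \ref{le:indices.when.in.the.image} never allows $T([N_{K/K_1}(\gamma)])=T([N_{K/K_2}(\gamma)])\neq(1,1,1)$ (this is exactly the $i=3$ argument in Corollary \ref{cor:dimension.two.fixed.parts.are.from.F}). Worse, the proposed assembly ``an $\Omega^{-1}$ built on $[\gamma]$ plus one extra fixed class $[\delta]$'' cannot carry the $\Omega^{-2}$ relations: adjoining a fixed class to an $\Omega^{-1}$ yields the decomposable, $4$-dimensional module $\Omega^{-1}\oplus\mathbb{F}_2$, whereas $\Omega^{-2}$ is indecomposable of dimension $5$ and requires \emph{two} non-fixed generators sharing a bottom vertex. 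The correct construction takes $X=\langle[\gamma_1],[\gamma_2]\rangle$ with $[\gamma_1]^{1+\sigma_1}=[x]=[\gamma_2]^{1+\sigma_2}$; Lemma \ref{le:indices.when.in.the.image} then forces $T([x])=(1,1,a_1)$, $T([N_{K/K_1}(\gamma_1)])=(1,a_1,a_1)$, $T([N_{K/K_2}(\gamma_2)])=(a_2,1,a_1)$, and these three values are automatically independent, so $T(X^G)=\im(T)$ without any auxiliary $[\delta]$ and the five generators are independent by pushing any relation through $1+\sigma_1$, $1+\sigma_2$.

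Two further points in the $\Omega^{-1}\oplus\Omega^{-1}$ case need tightening. Your independence argument hinges on the hypothetical common fixed class lying in $[N_{K/K_1}(K^\times)]\cap[N_{K/K_2}(K^\times)]$; this is clean only if the two realized coordinate planes are the ones sharing the $T$-value $(1,1,a_1)$, so that the shared class is $[N_{K/K_2}(\gamma_1)]$ in one module and $[N_{K/K_1}(\gamma_2)]$ in the other (the paper's choice via Lemma \ref{le:compatible.supernorms.means.preimage.exists.somewhere}); an arbitrary pair of planes leaves this step unjustified. Finally, the clause $X^G\cap(\mathfrak{B}+\mathfrak{C}+\mathfrak{D})=\{[1]\}$ is not something one ``reads off'' from an $N_{K/F}$-value of the distinguished element: membership in $\mathfrak{B}+\mathfrak{C}+\mathfrak{D}$ is an existence statement about solutions to the diagrams of Figure \ref{fig:first.filtration.diagrams}, and the paper's proof multiplies those hypothetical solutions into $[\gamma_1],[\gamma_2]$ to manufacture an element of $[N_{K/K_1}(K^\times)]\cap[N_{K/K_2}(K^\times)]$ with nontrivial $T$-image, contradicting the case hypothesis. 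That argument is absent from your sketch and is the substance of the final assertion.
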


\begin{proof}
We proceed by cases based on $\dim(\im(T))$.  First, if $\dim(\im(T)) = 1$ then let $[x] \in J(K)^G$ be given so that $T([x]) \neq (1,1,1)$.  Then $X:=\langle[x]\rangle$ has the desired properties.

Now suppose that $\dim(\im(T)) = 2$.  By Lemma \ref{le:compatible.supernorms.means.preimage.exists.somewhere} we know that if $\im(T)$ is any of 
\begin{align*}
\langle (1,a_1,a_1),(1,1,a_1)\rangle &= \{(1,w,z):w \in (K_2^\times\cap K^{\times 2})/K_2^{\times 2},z \in (K_3^\times \cap K^{\times 2})/K_3^{\times 2}\}\\
\langle (1,1,a_1),(a_2,1,a_1)\rangle &= \{(w,1,z):w \in (K_1^\times\cap K^{\times 2})/K_1^{\times 2},z \in (K_3^\times \cap K^{\times 2})/K_3^{\times 2}\}\text{, or}\\
\langle (1,a_1,1),(a_1,1,1)\rangle &= \{(w,z,1):w \in (K_1^\times\cap K^{\times 2})/K_1^{\times 2},z \in (K_2^\times \cap K^{\times 2})/K_2^{\times 2}\},
\end{align*} then we can find the some $[\gamma] \in J(K)$ so that $\left\langle T\left([N_{K/K_1}(\gamma)]\right),T\left([N_{K/K_2}(\gamma)]\right)\right\rangle = \im(T)$.  It is easy to see that $X:=\langle[\gamma]\rangle \simeq \Omega^{-1}$, and since the nontrivial fixed elements in this module have nontrivial images under $T$, we get $X^G \cap [F^\times] = \{[1]\}$.

On the other hand, if $\dim(\im(T)) = 2$ but $\im(T)$ is none of the three subspaces above, then Lemma \ref{le:indices.when.in.the.image} tells us that no element from $J(K)^G \setminus \ker(T)$ is in the image of $1+\sigma_1$ or $1+\sigma_2$.  In this case we let $[x_1],[x_2] \in J(K)^G$ be given so that $\{T([x_1]),T([x_2])\}$ forms a basis for $\im(T)$; we then get $X:=\langle[x_1],[x_2]\rangle\simeq \mathbb{F}_2\oplus\mathbb{F}_2$, with $X \cap [F^\times] = \{[1]\}$.

Now suppose that $\dim(\im(T)) = 3$.  First consider the case where $T([N_{K/K_1}(K^\times)]\cap[N_{K/K_2}(K^\times)]) \neq \{(1,1,1)\}$, and let $[x]$ be given so that $[N_{K/K_1}(\gamma_2)] = [x] = [N_{K/K_2}(\gamma_1)]$ for some $[\gamma_1],[\gamma_2] \in J(K)$ and with $T([x]) \neq (1,1,1)$. (Note that since $[x]$ is in the image of $N_{K/K_1}$ and $N_{K/K_2}$, it is automatically in $J(K)^G$; hence it makes sense to evaluate its image under $T$.) Lemma \ref{le:indices.when.in.the.image} tells us that $T([x]) = (1,1,a_1)$, and furthermore that $T([N_{K/K_1}(\gamma_1)]) = (1,a_1,a_1)$ and $T([N_{K/K_2}(\gamma_2)]) = (a_2,1,a_1)$.  We claim that $X:=\langle[\gamma_1],[\gamma_2]\rangle \simeq \Omega^{-2}$; certainly the appropriate relations hold, so we only need to check that the module is $5$-dimensional.  Note that $\{[N_{K/K_1}(\gamma_1)],[x],[N_{K/K_2}(\gamma_2)]\}$ must be independent since their images under $T$ are independent, and hence any nontrivial dependence must involve $[\gamma_1]$ or $[\gamma_2]$.  But an application of $1+\sigma_1$ (or $1+\sigma_2$) to such a relation creates a nontrivial relation amongst $\{[N_{K/K_1}(\gamma_1)],[x],[N_{K/K_2}(\gamma_2)]\}$, contrary to their independence.  Since we have $X \simeq \Omega^{-2}$, we get $X^G = \langle [N_{K/K_1}(\gamma_1)],[x],[N_{K/K_2}(\gamma_2)]\rangle$, whence $X^G \cap [F^\times] = \{[1]\}$.

Alternatively, suppose that $\dim(\im(T)) = 3$ but $T([N_{K/K_1}(K^\times) \cap [N_{K/K_2}(K^\times)]) = \{(1,1,1)\}$.  Lemma \ref{le:compatible.supernorms.means.preimage.exists.somewhere} gives us elements $[\gamma_1],[\gamma_2] \in J(K)$ so that \begin{align*}
T([N_{K/K_1}(\gamma_1)])&=(1,a_1,a_1)\\
T([N_{K/K_2}(\gamma_1)])&=(1,1,a_1) = T([N_{K/K_1}(\gamma_2)])\\
T([N_{K/K_2}(\gamma_2)])&=(a_2,1,a_1).
\end{align*}
We define $X = \langle [\gamma_1],[\gamma_2]\rangle$.  One sees that $\langle[\gamma_1]\rangle \simeq \langle[\gamma_2]\rangle \simeq \Omega^{-1}$ in the same manner as above (these modules satisfy the appropriate relations by definition, and one can argue they generate a module of the appropriate dimension by leveraging the independence of the image of their fixed components under $T$).  We claim that $X \simeq \Omega^{-1}\oplus\Omega^{-1}$; for the sake of contradiction, then, assume instead that $\langle[\gamma_1]\rangle \cap \langle[\gamma_2]\rangle \neq \{[1]\}$.  By Lemma \ref{le:exclusion.lemma} this implies that there is some $[x]\neq [1]$ with $[x] \in \langle[\gamma_1]\rangle^G \cap \langle[\gamma_1]\rangle^G
$.  Considering images under $T$ and using Lemma \ref{le:indices.when.in.the.image}, we must have $[N_{K/K_2}(\gamma_1)] = [x] = [N_{K/K_1}(\gamma_2)]$, contrary to the assumption in this case that $T([N_{K/K_1}(K^\times)] \cap [N_{K/K_2}(K^\times)]) = \{(1,1,1)\}$.  Hence we get $X \simeq \Omega^{-1} \oplus \Omega^{-1}$.

Finally, we check that $\dim(X^G \cap [F^\times]) = 1$ with $X^G \cap (\mathfrak{B}+\mathfrak{C}+\mathfrak{D}) = \{[1]\}$.  The former follows from the rank-nullity theorem applied to the function $T$; in fact we see that $X^G \cap [F^\times] = \{[1],[N_{K/K_2}(\gamma_1)][N_{K/K_1}(\gamma_2)]\}$.  For the latter, suppose instead that $[N_{K/K_2}(\gamma_1)][N_{K/K_1}(\gamma_2)] \in \mathfrak{B}+\mathfrak{C}+\mathfrak{D}$.  Then we get $[N_{K/K_2}(\gamma_1)][N_{K/K_1}(\gamma_2)] = [f_\mathfrak{B}][f_{\mathfrak{C}}][f_\mathfrak{D}]$ for some $[f_\mathfrak{B}]\in\mathfrak{B},[f_\mathfrak{C}]\in\mathfrak{C}$ and $[f_\mathfrak{D}]\in\mathfrak{D}$; in particular, this means we have elements elements $[\gamma_{\mathfrak{B}}],[\gamma_{\mathfrak{C}}],[\gamma_{\mathfrak{D}}] \in J(K)$ which solve the relevant diagrams from Figure \ref{fig:first.filtration.diagrams}.  One can then check that
\begin{align*}
N_{K/K_1}([\gamma_2][\gamma_{\mathfrak{C}}]) = [N_{K/K_1}(\gamma_2)][f_{\mathfrak{C}}] = [N_{K/K_2}(\gamma_1)][f_{\mathfrak{B}}][f_{\mathfrak{D}}] = N_{K/K_2}([\gamma_1][\gamma_{\mathfrak{B}}][\gamma_{\mathfrak{D}}]).
\end{align*} This element is conspicuously an element in $[N_{K/K_1}(K^\times)] \cap [N_{K/K_2}(K^\times)]$, and since $\ker(T) = [F^\times]$ we get that $T([N_{K/K_2}(\gamma_1)][f_\mathfrak{B}][f_\mathfrak{D}]) = T([N_{K/K_2}(\gamma_1)]) \neq \{(1,1,1)\}$.  This runs contrary to the overriding assumption in this case, that $T([N_{K/K_1}(K^\times)] \cap [N_{K/K_2}(K^\times)]) = \{(1,1,1)\}$.
\end{proof}

\section{Proof of Theorem \ref{th:main.theorem}}\label{sec:proof.of.main.result}

We need one final preparatory result, which is again a manifestation of Hilbert 90 in the biquadratic case. 

\begin{lemma}\label{le:chain.breaking.lemma}
Let $\{\ell,m,n\}=\{1,2,3\}$.  If $f \in F^\times$ has $[f] \in [N_{K/K_\ell}(K^\times)]$, then $[f] \in [N_{K_m/F}(K_m^\times)][N_{K_n/F}(K_n^\times)]$.  
\end{lemma}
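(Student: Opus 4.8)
\emph{Notation and strategy.} Write $\rho_i$ for the non-identity element of $H_i$, so that $N_{K/K_i}=1+\rho_i$ on $K^\times$ and $K^{\langle\rho_i\rangle}=K_i$; since the product of the three involutions of $G$ is trivial we have $\rho_m\rho_\ell=\rho_n$ and $\rho_n\rho_m=\rho_\ell$. The plan has two stages. \emph{Stage 1: upgrade to an honest norm equation.} From $[f]=[N_{K/K_\ell}(\gamma_0)]$ in $J(K)$ (some $\gamma_0\in K^\times$) one gets $f\,N_{K/K_\ell}(\gamma_0)\in K_\ell^\times\cap K^{\times 2}$, and Kummer theory for $K=K_\ell(\sqrt{d_\ell})$ with $d_\ell\in\{a_1,a_2,a_1a_2\}$ identifies $K_\ell^\times\cap K^{\times 2}=\langle K_\ell^{\times 2},d_\ell\rangle$. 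Absorbing a square of $K_\ell^\times$ into $\gamma_0$ and replacing $f$ by $f/d_\ell^{\,\varepsilon}$ for the appropriate $\varepsilon\in\{0,1\}$ --- harmless in $J(K)$, since $[d_\ell]=[1]$ there --- I may assume outright that $f=N_{K/K_\ell}(\gamma)=\gamma^{1+\rho_\ell}$ for a genuine $\gamma\in K^\times$.

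\emph{Stage 2: peel off the two quadratic norms.} Because $f\in F^\times$ is literally fixed by $\rho_m$, applying $\rho_m$ to $f=\gamma^{1+\rho_\ell}$ and using $\rho_m\rho_\ell=\rho_n$ gives the honest identity $\gamma^{1+\rho_\ell}=\gamma^{\rho_m+\rho_n}$ in $K^\times$. A short computation in $\mathbb{Z}[G]$ then shows that $\gamma^{1-\rho_m}$ is fixed by $\rho_n$ (hence lies in $K_n^\times$) and that $\gamma^{1-\rho_n}$ lies in $K_m^\times$, and that each of these has trivial norm down to $F$ because $(1-\rho)(1+\rho)=0$. Classical Hilbert 90 for the quadratic extension $K_n/F$ then writes $\gamma^{1-\rho_m}=\eta/\eta^{\rho_m}$ with $\eta\in K_n^\times$, so that $[\gamma^{1-\rho_m}]=[N_{K_n/F}(\eta)]$ in $J(K)$; symmetrically $[\gamma^{1-\rho_n}]=[N_{K_m/F}(\eta')]$ for some $\eta'\in K_m^\times$. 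Finally, modulo squares $[f]=[\gamma^{1+\rho_\ell}]=[\gamma^{\rho_m+\rho_n}]=[\gamma^{1-\rho_m}][\gamma^{1-\rho_n}]$, which exhibits $[f]$ as an element of $[N_{K_m/F}(K_m^\times)][N_{K_n/F}(K_n^\times)]$.

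I expect Stage 1 to be the only genuinely delicate point: Stage 2 needs $\gamma^{1-\rho_m}\in K_n^\times$ on the nose, not merely up to squares, so one cannot afford to reason with square-classes in $K_\ell^\times$ but must clear the Kummer ambiguity by $d_\ell$ explicitly (which is exactly where it matters that $[d_\ell]=[1]$ in $J(K)$). Everything after that is formal group-ring manipulation together with ordinary Hilbert 90 for $K_m/F$ and $K_n/F$; this deployment of Hilbert 90 along the quadratic layers of the biquadratic tower is what warrants calling the lemma a manifestation of Hilbert 90 in the biquadratic case, and one could equally invoke a packaged ``biquadratic Hilbert 90'' in the style of \cite{DMSS}.
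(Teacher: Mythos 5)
Your proposal is correct, and while your Stage 1 is exactly the paper's first step (the Kummer-theoretic reduction $f/d_\ell^{\,\varepsilon}=N_{K/K_\ell}(\tilde k)$, harmless in $J(K)$ because $[d_\ell]=[1]$), your Stage 2 takes a genuinely different route. The paper proves the key containment $F^\times\cap N_{K/K_3}(K^\times)\subseteq N_{K_1/F}(K_1^\times)\cdot N_{K_2/F}(K_2^\times)$ by brute force: it expands $k=f_1+f_2\sqrt{a_1}+f_3\sqrt{a_2}+f_4\sqrt{a_1a_2}$, extracts the constraint $f_1f_4=f_2f_3$ from $N_{K/K_3}(k)\in F^\times$, and explicitly factors $f_1^2-a_1f_2^2-a_2f_3^2+a_1a_2f_4^2$ as a product of two binary norm forms (with a separate case when $f_1=0$). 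You instead argue formally: from $f=\gamma^{1+\rho_\ell}$ being fixed by $\rho_m$ you get $\gamma^{1+\rho_\ell-\rho_m-\rho_n}=1$, which indeed forces $\gamma^{1-\rho_m}\in K_n^\times$ and $\gamma^{1-\rho_n}\in K_m^\times$, each of norm $1$ to $F$ since $(1-\rho)(1+\rho)=0$; classical Hilbert 90 for the quadratic layers then rewrites each as a norm from $K_n$, resp.\ $K_m$, modulo squares, and the product recovers $[f]$ because $J(K)$ has exponent $2$. I checked the group-ring identities and they are sound. The trade-off: the paper's computation yields the honest (not merely mod-squares) containment of $F^\times\cap N_{K/K_3}(K^\times)$ in the product of the two quadratic norm groups, by completely elementary means; your argument is shorter, coordinate-free, and makes the ``Hilbert 90 for the biquadratic tower'' flavor explicit, but delivers only the square-class statement --- which is all the lemma asserts and all that is used later, so this is a legitimate and arguably cleaner alternative proof.
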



\begin{proof}
We prove the result when $\ell = 3$, $m = 1$ and $n=2$; the other results follow by the symmetry of the fields $K_1,K_2$ and $K_3$.

First, we argue that if $f \in F^\times$ has $[f] \in [N_{K/K_3}(K^\times)]$, then \begin{equation}\label{eq:first.fact}\frac{f}{a_1^\varepsilon} = N_{K/K_3}(\tilde k)\end{equation}for some $\tilde k \in K^\times$ and $\varepsilon \in \{0,1\}$. To see this, note that $f = k^{1+\sigma_1 \sigma_1} \hat k^2$ for some $k,\hat k \in K$.  Solving for $\hat k^2$ and using the fact that $F \subseteq K_3$, we then have $\hat k^2 \in K_3$.  But this means  $\hat k^2 \in K^{\times 2} \cap K_3^\times$, so by Kummer theory we get $\hat k^2=k_3^2 a_1^\varepsilon$, where $k_3 \in K_3^\times$ and $\varepsilon \in \{0,1\}$.  Naturally we have $k_3^2 = N_{K/K_3}(k_3)$, so that our original expression becomes $$f = k^{1+\sigma_1\sigma_2}\hat k^2 = k^{1+\sigma_1\sigma_2} k_3^2a_1^\varepsilon = N_{K/K_3}(kk_3) a_1^\varepsilon.$$ Setting $\tilde k = k k_3$ and dividing through by $a_1^\varepsilon$ gives Equation (\ref{eq:first.fact}).

Now we argue that \begin{equation}\label{eq:second.fact}F^\times \cap N_{K/K_3}(K^\times) \subseteq  N_{K_1/F}(K_1^\times) \cdot N_{K_2/F}(K_2^\times).\end{equation}  For this, suppose that we have elements $g \in F^\times$ and $k \in K^\times$ so that $g = N_{K/K_3}(k)$. Now $k = f_1 +f_2\sqrt{a_1}+f_3\sqrt{a_2}+f_4\sqrt{a_1a_2}$ for some $f_1,f_2,f_3,f_4 \in F^\times$, and so by assumption we get $$g = N_{K/K_3}(k) = (f_1^2-a_1f_2^2-a_2f_3^2+a_1a_2f_4^2) + \sqrt{a_1a_2}(2f_1f_4-2f_2f_3).$$  However since $g \in F^\times$ we must have $f_1f_4 = f_2f_3$.  Our goal is to write $g$ as an element of $N_{K_1/F}(K_1^\times)\cdot N_{K_2/F}(K_2^\times)$, which means we'd like to find $h_1,h_2,h_3,h_4 \in F$ so that $h_1 +h_2 \sqrt{a_1} \in K_1$ and $h_3+h_4\sqrt{a_2} \in K_2$ yield $$g = N_{K_1/F}(h_1+h_2\sqrt{a_1}) \cdot N_{K_2/F}(h_3+h_4\sqrt{a_2}) = (h_1^2-h_2^2a_1)(h_3-h_4^2 a_2).$$  In other words, we need to solve $$f_1^2-a_1f_2^2-a_2f_3^2+a_1a_2f_4^2 = (h_1^2-h_2^2a_1)(h_3-h_4^2 a_2).$$

We proceed by cases.  First, suppose that $f_1=0$.  Hence we must have either $f_2 = 0$ or $f_3 = 0$.  Note if $f_2 = 0$ then our expression for $g$ becomes $$g = -a_2f_3^2+a_1a_2f_4^2 = (f_3^2-f_4^2a_1)(0^2-1^2 a_2).$$ A similar computation settles the case where $f_3 = 0$.  So now suppose that $f_1 \neq 0$, and observe that since $f_4 = \frac{f_2f_3}{f_1}$ we have $$f_1^2-f_2^2 a_1 - f_3^2 a_2+f_4^2 a_1a_2 = \left(f_1^2-f_2^2 a_1\right)\left(1^2-\frac{f_3^2}{f_1^2}a_2\right).$$

With both (\ref{eq:first.fact}) and (\ref{eq:second.fact}) in hand, we can prove the lemma.  If we apply (\ref{eq:second.fact}) to $\frac{f}{a_1^\varepsilon}$ from (\ref{eq:first.fact}), then we see that $$\frac{f}{a_1^\varepsilon} \in N_{K_1/F}(K_1^\times) \cdot N_{K_2/F}(K_2^\times).$$  But since $[a_1]=[1]$, we get the desired result.
\end{proof}

We are now ready for the proof of the main result of this paper.  Our basic strategy is to show that the modules $\hat J$ and $X$ from Theorems \ref{th:trivial.candidate} and \ref{th:construction.of.X} provide the desired decomposition, though in the case where $\dim(\im(T)) = 3$ and $T([N_{K/K_1}(K^\times)] \cap [N_{K/K_2}(K^\times)]) = \{(1,1,1)\}$ we will need to make a small adjustment to $\hat J$ --- removing a single trivial summand --- to achieve our result.

\begin{proof}[Proof of Theorem \ref{th:main.theorem}]
Let $\hat J$ be the module from Theorem \ref{th:trivial.candidate} and let $X$ be the module from Theorem \ref{th:construction.of.X}.  

If we are not in the case where $\dim(\im(T)) = 3$ and $T([N_{K/K_1}(K^\times)] \cap [N_{K/K_2}(K^\times)]) = \{(1,1,1)\}$, then define $\tilde J = \hat J$.  Otherwise, note that in the final case of Theorem \ref{th:construction.of.X} we have a unique $[x_0] \in X^G \cap [F^\times]$, and that $[x_0] \not\in \mathfrak{B}+\mathfrak{C}+\mathfrak{D}$.  Now in the construction of $\hat J$, the summand $Y_F$ is chosen as the span of $\mathcal{F}_0$, where $\mathcal{F}_0$ is an arbitrary basis for a complement of $\mathfrak{B}+\mathfrak{C}+\mathfrak{D}$ within $[F^\times]$ (see the definition of $\mathcal{F}_0$ in Theorem \ref{th:trivial.candidate}).  Since $[x_0] \in [F^\times] \setminus (\mathfrak{B}+\mathfrak{C}+\mathfrak{D})$, we can assume that $[x_0] \in \mathcal{F}_0$.  In this case we define $\tilde Y_F = \sum_{[f] \in \mathcal{F}_0\setminus\{[x_0]\}} \langle [f]\rangle = \bigoplus_{[f] \in \mathcal{F}_0\setminus\{[x_0]\}} \langle [f] \rangle$, and set $$\tilde J = Y_A+Y_V+Y_W+Y_{B}+Y_{C}+Y_{D}+\tilde Y_F = Y_A\oplus Y_V \oplus Y_W \oplus Y_{B}\oplus Y_{C}\oplus Y_{D}\oplus \tilde Y_F.$$  (I.e., the module $\tilde J$ is just the result of removing the summand $\langle [x_0]\rangle$ from $\hat J$.)  

In either case we will show that $J(K) = \tilde J \oplus X$.  Of course we have $\tilde J + X \subseteq J(K)$, and furthermore our construction of $\tilde J$ gives $X^G \cap \tilde J = \{[1]\}$,  so that $\tilde J + X = \tilde J \oplus X$.  Hence we only need to verify that $J(K) \subseteq \tilde J + X$.  We do this by examining the possible isomorphism classes for $\langle [\gamma]\rangle$, where $[\gamma] \in J(K)$.

First, suppose that $\langle [\gamma]\rangle \simeq \mathbb{F}_2$, so that $[\gamma] \in J(K)^G$.  If $[\gamma] \in [F^\times]$, then since $[F^\times] = \hat J^G \subseteq \tilde J^G \oplus X^G$, we have $[\gamma] \in \tilde J+X$.  Otherwise we have $T([\gamma]) \neq (1,1,1)$, in which case by Theorem \ref{th:construction.of.X} there exists some $[x] \in X^G$ with $T([\gamma]) = T([x])$.  We then have $[\gamma][x] \in [F^\times]$, and from the previous case this gives $[\gamma][x] \in \tilde J+X$.  Since $[x] \in \tilde J+X$, we get $[\gamma] \in \tilde J+X$.

Now suppose that $\langle [\gamma] \rangle \simeq \mathbb{F}_2[\overline{G_1}]$.  Lemma \ref{cor:dimension.two.fixed.parts.are.from.F} tells us that $[\gamma]^{1+\sigma_1} \in [F^\times]$, and so $[\gamma]^{1+\sigma_1} \in \mathfrak{B}$.  Corollary \ref{cor:solvable.in.hat.J} tells us that there exists some $[\tilde \gamma] \in \hat J$ so that $[\tilde \gamma]^{1+\sigma_2} = [1]$ and $[\tilde \gamma]^{1+\sigma_1} = [\gamma]^{1+\sigma_1}$; in fact, since $\hat J$ and $\tilde J$ differ by only a trivial summand, and since $[\tilde \gamma] \not\in J(K)^G$, we can assume $[\tilde \gamma] \in \tilde J$ as well.  But then we get $[\gamma][\tilde \gamma] \in J(K)^G$, so by the previous case we have $[\gamma][\tilde\gamma] \in \tilde J+X$.  Since $[\tilde \gamma] \in \tilde J+X$ already, this gives $[\gamma] \in \tilde J+X$.  

The cases where $\langle [\gamma] \rangle$ is isomorphic to either $\mathbb{F}_2[\overline{G_2}]$ or $\mathbb{F}_2[\overline{G_3}]$ follow the same argument as the case $\mathbb{F}_2[\overline{G_1}]$ above.

Now suppose that $\langle[\gamma]\rangle \simeq \Omega^{-1}$, and first consider the case where $T(\langle[\gamma]\rangle^G) = \{(1,1,1)\}$.  By Lemmas \ref{le:supernorm.as.F.detector} and \ref{le:chain.breaking.lemma} we have $[N_{K/K_1}(\gamma)] \in [N_{K/K_1}(K^\times)] \cap[F^\times] = [N_{K_2/F}(K_2^\times)][N_{K_3/F}(K_3^\times)]$, say $[N_{K/K_1}(\gamma)] = [f_{1,2}][f_{1,3}]$, where for $i \in \{2,3\}$ we have $[f_{1,i}] = [N_{K_i/F}(k_i)]$ for some $k_i \in K_i^\times$.  This means that $[f_{1,2}] \in \mathfrak{C}$ and $[f_{1,3}] \in \mathfrak{D}$, so by Corollary \ref{cor:solvable.in.hat.J} there exists $[\gamma_{1,i}] \in \hat J$ with $[\gamma_{1,i}]$ and $[f_{1,i}]$ providing a solution to the appropriate diagram; since $\hat J$ and $\tilde J$ differ only by a trivial summand, we can assume that $[\gamma_{1,i}] \in \tilde J$ for $i \in \{2,3\}$. (See Figure \ref{fig:gamma.is.omega.-1.what.we.have.from.indices.lemma} for a graphical description of these relationships.)
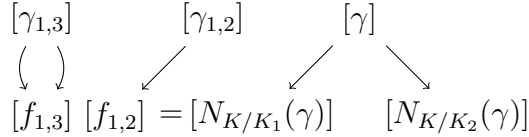
\begin{figure}[!ht]
\begin{tikzpicture}[scale=.65]
\node (G) at (0,0) {$[\gamma]$};
\node (N1) at (-2,-2) {$[N_{K/K_1}(\gamma)]$};
\node at (-3.85,-2) {$=$};
\node (G1D) at (-3,0) {$[\gamma_{1,2}]$};
\node (G1E) at (-6.5,0) {$[\gamma_{1,3}]$};
\node (f1D) at (-5,-2) {$[f_{1,2}]$};
\node (f1E) at (-6.5,-2) {$[f_{1,3}]$};
\node (N2) at (2,-2) {$[N_{K/K_2}(\gamma)]$};

\draw[->] (G) to (N1);
\draw[->] (G1D) to (f1D);
\draw[->, out=240, in=120] (G1E) to (f1E);
\draw[->, out=300, in=60] (G1E) to (f1E);
\draw[->] (G) to (N2);
\end{tikzpicture}
\caption{Decomposing $[N_{K/K_1}(\gamma)]$ in terms of solutions to the diagrams for $\mathfrak{C}$ and $\mathfrak{D}$.}
\label{fig:gamma.is.omega.-1.what.we.have.from.indices.lemma}
\end{figure}

Consider the element $[\tilde \gamma] = [\gamma][\gamma_{1,2}][\gamma_{1,3}]$.  One sees that $[\tilde \gamma]^{1+\sigma_2} = [1]$, and that $[\tilde \gamma]^{1+\sigma_1} = [N_{K/K_2}(\gamma)][f_{1,3}]$.  Hence $\langle [\tilde \gamma] \rangle$ is isomorphic to either $\{[1]\}$ or $\mathbb{F}_2$ or $\mathbb{F}_2[\overline{G_1}]$.  Our previous cases therefore allow us to conclude $[\tilde \gamma] \in \tilde J+X$.  Since $[\gamma_{1,2}],[\gamma_{1,3}] \in \tilde J$, we have $[\gamma] \in \tilde J+X$.

If $T(\langle[\gamma]\rangle^G) \neq \{(1,1,1)\}$ then Lemma \ref{le:indices.when.in.the.image} gives us that precisely one of the following holds
\begin{itemize}
\item $T([N_{K/K_1}(\gamma)]) = (1,a_1,a_1)$ and $T([N_{K/K_2}(\gamma)]) = (1,1,a_1)$; or
\item $T([N_{K/K_1}(\gamma)]) = (1,1,a_1)$ and $T([N_{K/K_2}(\gamma)]) = (a_2,1,a_1)$; or
\item $T([N_{K/K_1}(\gamma)]) = (1,a_1,1)$ and $T([N_{K/K_2}(\gamma)]) = (a_2,1,1)$.
\end{itemize}
In any of these cases our construction for $X$ (see the second case in Theorem \ref{th:construction.of.X}) gives an element $[x] \in [X]$ so that $T([N_{K/K_1}(\gamma)]) = T([N_{K/K_1}(x)])$ and $T([N_{K/K_2}(\gamma)]) =T([N_{K/K_2}(x)])$.  Hence the images of $[\gamma][x]$ under $1+\sigma_1$ and $1+\sigma_2$ both lie in $[F^\times]$, and so $\langle [\gamma][x] \rangle$ is either trivial or falls into one of the previous cases.  We get $[\gamma][x] \in \tilde J+X$, whence $[\gamma] \in \tilde J+X$.

The final case to consider is when $\langle[\gamma]\rangle \simeq \mathbb{F}_2[G]$.  In this case, note that $[N_{K/F}(\gamma)] \in \mathfrak{A}$, and Corollary \ref{cor:solvable.in.hat.J} gives us some element $[\tilde \gamma] \in \hat J$ (which we may assume is in $\tilde J$ since $\tilde J$ and $\hat J$ differ only by a trivial summand) so that $[N_{K/F}(\tilde \gamma)] = [N_{K/F}(\gamma)]$.  From this we get that $\langle [\gamma][\tilde \gamma]\rangle$ is not free, and so is one of the previous isomorphism types.  As usual, this gives us $[\gamma] \in \tilde J+X$.
\end{proof}

\section{Some realizability results}\label{sec:realizability}

Theorem \ref{th:main.theorem} tells us that there are a limited number of summands that could possibly appear in a decomposition of $J(K)$, but is it the case that each of these summand types occurs for at least one biquadratic extension $K/F$?  In this section we offer some partial results concerning this kind of realizability question, focusing particularly on the possible structures for the $X$ summand from Theorem \ref{th:main.theorem}.  For a more complete treatment of this problem of realizing the various summands, the reader is encouraged to consult \cite{CMSSTT} which enhances the current work by exploring its connection to the Brauer group $\text{Br}(F)$.  

The $X$ summand takes on one of $6$ possible structures, with the various possibilities determined by the image of the function $T$ from section \ref{sec:X.candidate} (as detailed in Theorem \ref{th:construction.of.X}).  To determine whether these structures are realizable, we will view the conditions found in Theorem \ref{th:construction.of.X} through the lens of Galois embedding problems via Lemma \ref{le:Galois.groups.for.fixed.elements}.  

First we introduce some terminology.  Note that since $K/F$ is a biquadratic extension with intermediate fields $K_1, K_2$, and $K_3$, if there exists some extension $L/K$ which is Galois over $F$ with $\Gal(L/F) \simeq D_4$, then there is a unique $i \in \{1,2,3\}$ so that $\Gal(L/K_i) \simeq \mathbb{Z}/4\mathbb{Z}$.  We will refer to such an extension as a $D_4$-extension of type $i$. Likewise if there is an extension $L/K$ with $\Gal(L/F) \simeq \mathbb{Z}/4\mathbb{Z} \oplus\mathbb{Z}/2\mathbb{Z}$, then there is a unique $i \in \{1,2,3\}$ so that there exists some field $\widetilde{L}$ with $K_i \subsetneq \widetilde{L} \subsetneq L$ and $\Gal(\widetilde{L}/F) \simeq \mathbb{Z}/4\mathbb{Z}$.  We will refer to such an extension as a $\mathbb{Z}/4\mathbb{Z}\oplus\mathbb{Z}/2\mathbb{Z}$-extension of type $i$.  

Lemma \ref{le:Galois.groups.for.fixed.elements} tells us that if $[\gamma] \in J(K)^G$, then the Galois group of $K(\sqrt{\gamma})/F$ can be computed entirely in terms of $T([\gamma])$.  For example, suppose that $T([\gamma]) = (a_2,1,1)$.  By Lemma \ref{le:Galois.groups.for.fixed.elements} we see that $K(\sqrt{\gamma})/F$ is a $D_4$-extension of type $1$.  Similarly, if $T([\gamma]) = (1,a_1,1)$ or $T([\gamma]) = (1,1,a_1)$, then $K(\sqrt{\gamma})/F$ is a $D_4$-extension of type $2$ or $3$ (respectively).  We also have that $T([\gamma]) \in  \{(a_2,a_1,1),(a_2,1,a_1),(1,a_1,a_1)\}$ implies that $K(\sqrt{\gamma})/F$ is a $\mathbb{Z}/4\mathbb{Z}\oplus \mathbb{Z}/2\mathbb{Z}$-extension (of types $3$,$2$, and $1$, respectively).  If $T([\gamma]) = (a_2,a_1,a_1)$, then $K(\sqrt{\gamma})/F$ is a $Q_8$-extension.  Finally, if $T([\gamma]) = (1,1,1)$ then $\Gal(K(\sqrt{\gamma})/F)$ is elementary $2$-abelian of rank $3$.  Since each of the possible values of $T([\gamma])$ yields a distinct Galois group, this dictionary works both ways: the structure of the Galois group of a given $K(\sqrt{\gamma})/F$  determines the value of $T([\gamma])$.

Happily, these types of embedding problems have already been studied extensively.  For example, in \cite{Kiming} one finds that a quadratric extension $E(\sqrt{a})/E$ embeds in a $\mathbb{Z}_4$-extension if and only if $a=x^2+y^2$ for $x,y \in E$.  Likewise, a biquadratic extension $E(\sqrt{a},\sqrt{b})/E$ embeds in a $D_4$-extension $L/E$ for which $\Gal(L/E(\sqrt{b})) \simeq \mathbb{Z}/4\mathbb{Z}$ if and only if $b = ay^2-x^2$ for some $x,y \in E$.  Finally, a biquadratic extension $E(\sqrt{a},\sqrt{b})/E$ embeds in a $Q_8$-extension if and only if there are $e_1,e_2,e_3,f_1,f_2,f_3 \in E$ with $a = \sum_{i=1}^3 e_i^2$ and $b = \sum_{i=1}^3 f_i^2$ and $\sum_{i=1}^3 e_if_i=0$.  Hence we can determine if a given biquadratic extension $K/F$ has elements $[\gamma] \in J(K)^G$ with prescribed values under $T$ by determining whether certain equations hold over $F$.

\begin{example}
Let $F = \mathbb{Q}$ and $K = \mathbb{Q}(\sqrt{7},\sqrt{-5})$.  (Following our previous conventions, this means $a_1 = 7$ and $a_2 = -5$.)  None of the elements from $\{7,-5,-35\}$ be written as a sum of rational squares, and hence $K/F$ does not embed in any type of $\mathbb{Z}/4\mathbb{Z}\oplus\mathbb{Z}/2\mathbb{Z}$-extension.  Hence no element from $\{(a_2,a_1,1),(a_2,1,a_1),(1,a_1,a_1)\}$ is in $\text{im}(T)$.  We also can clearly see that $7=-5y^2-x^2$ has no rational solutions, so $K/F$ does not embed in a $D_4$-extension of type $1$; therefore $(a_2,1,1) \not\in \text{im}(T)$.  Likewise $-5 = -35y^2-x^2$ and $-35 = -5y^2-x^2$  have no rational solutions.  For example, a rational solution to $-5=-35y^2-x^2$ would imply an integral solution to $u^2=7v^2+5w^2$ for which $5 \nmid u$ and $5\nmid v$.  One sees this is impossible by examining this equation modulo $5$.  Because these equations have no rational solutions, it follows that $K/F$ does not embed in a $D_4$-extension of type $2$ or $3$ either.  Hence $\{(1,a_1,1),(1,1,a_1)\} \not\in \text{im}(T)$.  Finally, since $-5$ is conspicuously not a sum of three rational squares, we have that $K/F$ does not embed in a $Q_8$-extension, and so $(a_2,a_1,a_1) \not\in \text{im}(T)$.  Hence $\text{im}(T) = \{(1,1,1)\}$, and by Theorem \ref{th:construction.of.X} we have $X = \{[1]\}$.
\end{example}

\begin{example}
Let $F = \mathbb{Q}$ and $K = \mathbb{Q}(\sqrt{7},\sqrt{-1})$.  We see that $K/F$ does not embed in any $\mathbb{Z}/4\mathbb{Z}\oplus \mathbb{Z}/2\mathbb{Z}$-extension since none of $7,-1$, nor $-7$ is a sum of two rational squares; it does not embed in a $D_4$-extension of type $1$ or $3$ since $7 = -y^2-x^2$ and $-7 = -y^2-x^2$ have no rational solutions; and it does not embed in a $Q_8$-extension since $-1$ is not a sum of three rational squares.  It does, however, embed in a $D_4$-extension of type $2$ since $-1 = -7y^2-x^2$ has a rational solution.  Hence $\text{im}(T) = \{(1,1,1),(1,a_1,1)\}$, and so $X \simeq \mathbb{F}_2$.
\end{example}

\begin{example}
Let $F = \mathbb{Q}$ and $K = \mathbb{Q}(\sqrt{2},\sqrt{-1})$.  Since $2$ is a sum of two rational squares but $-1$ and $-2$ are not, we see that $K/F$ embeds in a $\mathbb{Z}/4\mathbb{Z}\oplus\mathbb{Z}/2\mathbb{Z}$-extension of type $1$, but not of types $2$ or $3$.  It's also the case that $2 = -y^2-x^2$ has no rational solutions, but $-1 = 2y^2-x^2$ and $-2 = 2y^2-x^2$ do have rational solutions, and hence $K/F$ embeds in $D_4$-extensions of types $2$ and $3$, but not type $1$.  We also have that $-1$ is not a sum of three rational cubes, so $K/F$ does not embed in a $Q_8$-extension.    Taken together, this means that $\text{im}(T) = \{(1,1,1),(1,a_1,a_1),(1,a_1,1),(1,1,a_1)\}$, which is one of the coordinate planes (the ``$yz$-plane").  Hence from Theorem \ref{th:construction.of.X}, we have $X \simeq \Omega^{-1}$.
\end{example}

\begin{example}
Let $F = \mathbb{Q}$ and $K = \mathbb{Q}(\sqrt{5},\sqrt{13})$.  We know that each of $5$, $13$, and $65$ can be written as a sum of two rational (indeed, integral) squares, and hence $K/F$ embeds in $\mathbb{Z}/4\mathbb{Z}\oplus\mathbb{Z}/2\mathbb{Z}$-extensions of types $1$, $2$, and $3$.  Therefore $\{(1,1,1),(a_2,a_1,1),(a_2,1,a_1),(1,a_1,a_1)\} \subseteq \text{im}(T)$.  On the other hand, there is no rational solution to $5 = 13y^2-x^2$, since such a solution would imply an integral solution to $5u^2 = 13v^2-w^2$.  (After ensuring that $5$ does not divide all of $u$,$v$, and $w$, one examines the equation modulo $5$.) Hence $K/F$ does not embed in a $D_4$-extension of type $1$, and so $(a_2,1,1) \not\in \text{im}(T)$.  Since $T$ is an $\mathbb{F}_2$-space, we get $\{(1,1,1),(a_2,a_1,1),(a_2,1,a_1),(1,a_1,a_1)\} = \text{im}(T)$.  By Theorem \ref{th:construction.of.X}, we have $X \simeq \mathbb{F}_2\oplus\mathbb{F}_2$.  
\end{example}

\begin{example}
Let $F = \mathbb{Q}$ and $K = \mathbb{Q}(\sqrt{5},\sqrt{41})$.  Since $5$, $41$, and $205$ are all expressible as sums of two rational squares, and since we can write $5 = (2)^2+(1)^2+0^2$ and $41 = (-1)^2+(2)^2+(6)^2$, we see that $\{(a_2,a_1,1),(a_2,1,a_1),(1,a_1,a_1),(a_2,a_1,a_1)\} \subseteq \text{im}(T)$.  Hence $\dim(\text{im}(T)) = 3$ in this case, and we have either $X \simeq \Omega^{-1}\oplus\Omega^{-1}$ or $X \simeq \Omega^{-2}$ (depending on whether $[N_{K/K_1}(K^\times)] \cap [N_{K/K_2}(K^\times)] \subseteq [\mathbb{Q}^\times])$.  
\end{example}

Readers who are familiar with Galois embedding problems will recognize that absence of a key player from our discussion above: the Brauer group.  Indeed, the solvability of each of the embedding problems we've discussed is encoded in the vanishing of certain element(s) drawn from $\langle (a_1,a_1),(a_1,a_2),(a_2,a_2)\rangle \subseteq \text{Br}(\mathbb{Q})$.  The relationship between embedding problems and Galois cohomology has been studied extensively; for a small sampling, see \cite{JY,Kiming,Michailov4,Michailov5}.  The focus of the follow-up paper \cite{CMSSTT} is to reinterpret the decomposition of $J(K)$ provided by Theorem \ref{th:main.theorem} through the lens of certain equations in $\text{Br}(F)$.  In particular, this will allow us to compute the multiplicities of the various summands by analyzing subspaces within $\text{Br}(F)$, and ultimately show that all listed summand types from Theorem \ref{th:main.theorem} are realizable.


\end{document}